\documentclass[12pt]{amsart}
\usepackage{amssymb, latexsym,framed}
\usepackage{csquotes}
\usepackage{amsthm}
\usepackage{amsmath,amscd}
\usepackage{esint}
\usepackage{amsfonts}
\usepackage[title]{appendix}
\usepackage[dvips]{graphicx}
\usepackage{hyperref}
\usepackage{enumitem}
\usepackage{dsfont}
\usepackage[dvips]{graphicx}
\usepackage{color}
\numberwithin{equation}{section} \setlength{\oddsidemargin}{.0001in}
\setlength{\evensidemargin}{.0001in} \setlength{\textwidth}{6.6in}
\setlength{\textheight}{8.5in} \setlength{\topmargin}{.0001in}

\newcommand{\bean}{\begin{eqnarray*}}
\newcommand{\eean}{\end{eqnarray*}}
\newcommand{\be}{\begin{equation}}

\newcommand{\ee}{\end{equation}}
\newcommand{\bd}{\begin{displaymath}}
\newcommand{\ed}{\end{displaymath}}

\newcommand{\Om}{\Omega}

\newcommand{\beq}{\begin{equation}}
\newcommand{\eeq}{\end{equation}}
\newcommand{\bea}{\begin{eqnarray}}
\newcommand{\eea}{\end{eqnarray}}

\newcommand{\abs}[1]{\left\vert{#1}\right\vert}

\newcommand{\R}{\mathbb{R}}

\newcommand{\e}{\varepsilon}
\newcommand{\norm}[1]{\left\Vert#1\right\Vert}

\DeclareMathOperator{\dist}{dist}

\DeclareMathOperator{\tr}{tr}

\DeclareMathOperator*{\osc}{osc}

\newtheorem{thm}{Theorem}[section]
\newtheorem{prop}[thm]{Proposition}
\newtheorem{lem}[thm]{Lemma}
\newtheorem*{lem*}{Lemma}
\theoremstyle{definition}
\newtheorem{rem}[thm]{Remark}

\def\({\left(}
\def\){\right)}
\def\1{\mathds{1}}

\def\l|{\left|}

\def\r|{\right|}

\def\XXint#1#2#3{{\setbox0=\hbox{$#1{#2#3}{\int}$ }
\vcenter{\hbox{$#2#3$ }}\kern-.6\wd0}}

\begin{document}

\begin{abstract} We establish { small energy} H\"{o}lder bounds for  minimizers $u_\e$ of 
\[E_\e (u):=\int_\Om  W(\nabla u)+ \frac{1}{\e^2} \int_\Om f(u),\]
 where $W$ is a positive definite quadratic form { and the potential $f$ constrains $u$ to be close to a given manifold $\mathcal N$}. This implies that, up to subsequence, $u_\e$ converges locally uniformly to { an $\mathcal N$-valued} $W$-harmonic map, away from its singular set.
We treat general energies, covering in particular the 3D Landau-de Gennes model  for liquid crystals, with three distinct elastic constants.  Similar results are known in the isotropic case $W(\nabla u)=\vert \nabla u\vert^2$ and rely on three ingredients: a monotonicity formula for the scale-invariant energy on small balls, a uniform pointwise bound, and a Bochner equation for the energy density. In the level of generality we consider, all of these ingredients are absent. 
{ In particular, the lack of monotonicity formula is an important reason why optimal estimates on the singular set of $W$-harmonic maps constitute an open problem.}
Our novel argument relies on showing appropriate decay for the energy on small balls, separately at scales smaller and larger than $\e$: the former is obtained from the regularity of solutions to elliptic systems while the latter is inherited from the regularity of $W$-harmonic maps.  This also allows us to handle physically relevant boundary conditions for which, even in the isotropic case, uniform convergence up to the boundary was open.
\end{abstract}

\title{Singular perturbation of manifold-valued maps with anisotropic energy}
\date{}
\author{Andres Contreras \and Xavier Lamy}

\address[A.~Contreras]{Department of Mathematical Sciences, New Mexico State University, Las Cruces,
New Mexico, USA}
\email{acontre@nmsu.edu} 

\address[X.~Lamy]{Institut de Math\'ematiques de Toulouse; UMR 5219, Universit\'e de Toulouse; CNRS, UPS IMT, F-31062 Toulouse Cedex 9, France}
\email{xlamy@math.univ-toulouse.fr}

\maketitle

\section{Introduction}

Let $\Omega\subset\R^n$ ($n\geq 3$) be a smooth domain and $u\colon\Omega\to\R^k.$ For $\e>0$ define:
\begin{equation*}
E_\e(u;\Omega):=\int_\Omega W(x,\nabla u) + \frac{1}{\e^2}f(u).
\end{equation*}
Here
 $f\colon\R^k\to [0,\infty)$  is a smooth potential such that $\mathcal N=\lbrace f=0\rbrace$ is a smooth submanifold of $\R^k$,  with $f$ vanishing nondegenerately on $\mathcal N$, and  $W\colon\Omega\times\R^{k\times n}\to [0,\infty)$ is an elastic energy density such that $W(x,\cdot)$ is a positive definite quadratic form on $\R^{k\times n}$, uniformly in $x$.  
 
We are interested in the behavior, as $\e\to 0$, of minimizers of $E_\e$  with respect to general boundary conditions: strong or weak anchoring. In the case of strong (Dirichlet) anchoring, admissible functions have a prescribed profile at the boundary, which we assume to be regular  and to take values into $\mathcal N$; in the case of weak anchoring, one does not fix a profile at the boundary but instead one considers the modifed functional
\begin{equation}\label{eq:F_wa}
F_\e(u; \Omega):=E_\e(u;\Omega)+\int_{\partial \Omega} g(x, u),
\end{equation}
where $g:\partial\Omega\times \mathbb{R}^k\to [0,\infty)$ is a $C^2$ function.
 It can be checked (see e.g. \cite{majumdarzarnescu10}) that minimizers of $E_\e$ converge, up to subsequence, strongly in $H^1,$ to a map $u_\star\colon\Omega\to\mathcal N$ which minimizes the energy
\begin{equation*}
E_\star(u;\Omega)=\int_\Omega W(x,\nabla u),\qquad u\colon\Omega\to\mathcal N
\end{equation*}
subject to the same Dirichlet boundary conditions, in the strong anchoring case. Similarly, in the weak anchoring situation, minimizers of $F_\e$ converge to minimizers of
\begin{equation*}
F_\star(u;\Omega)=\int_\Omega W(x,\nabla u)+\int_{\partial \Omega} g(x, u),\qquad u\colon\Omega\to\mathcal N,
\end{equation*}
again, strongly in $H^1$ and up to subsequence. (In fact strong $H^1$ compactness of bounded energy sequences holds locally, without fixing boundary conditions, see Appendix~\ref{a:compact}.)  All difficulties arising in this article are already present in the case of $W$ with constant coefficients.  We are not concerned with the critical dimension $n=2$, where the issues are very different.

A strong motivation for the study of the class of energy functionals $E_\e,$  comes from their connection to physical problems in material sciences. An important ocurrence is the Landau-de Gennes energy for nematic liquid crystals \cite{mottramnewton}, where the unknown is a map $Q\colon\Omega\to\mathcal{S}_0:=\lbrace Q\in\R^{3\times 3}_{sym},\,\tr Q=0\rbrace\simeq \R^5$, and 
\begin{align}\label{WLdG}
W_{LdG} (\nabla Q) &=L_1\abs{\nabla Q}^2 + L_2\, \partial_j Q_{ik}\partial_k Q_{ij} + L_3\, \partial_j Q_{ij}\partial_k Q_{ik},
\\
\label{fLdG}
f_{LdG} (Q)  &= a^2 \abs{Q}^2 -b^2\tr(Q^3) +c^2 \abs{Q}^4.
\end{align}
The vacuum manifold is $\mathcal N=\lbrace s_\star (n\otimes n-I/3)\colon n\in\mathbb S^2\rbrace$ for some $s_\star(a,b,c)>0$.  In order for $W_{LdG}$ to be positive definite, the elastic constants satisfy (see e.g. \cite{kitavtsev16})
\begin{equation}\label{elastWpos}
L_1+L_2>0, \, 2L_1-L_2>0\text{ and } 6L_1+L_2+10 L_3>0.
\end{equation} 

 This theory has motivated a wealth of new mathematical results in the past few years, regarding e.g. the London limit $\e\to 0$  \cite{majumdarzarnescu10,canevari2d,
canevari17,baumanparkphillips12,
golovatymontero14,singperturb}, the fine structure of defects \cite{INSZinstab2d,INSZstab2d,
difrattaetal16,INSZuniqhedgehog,
INSZstabhedgehog,canevari2d,biaxialescape,henaomajumdarpisante17}, colloidal suspensions \cite{alamabronsardlamy16saturn,alamabronsardlamy16phys,alamabronsardlamy17} or lifting issues \cite{ballzarnescu08,ballzarnescu11,ballbedford15,bedford16,ignatlamy17}. The isotropic case corresponds to $L_2=L_3=0$, a restriction which was assumed in most of the above works. The general anisotropic case of three distinct elastic constants has  remained largely unexplored, due to the many mathematical challenges involved (see e.g. \cite{kitavtsev16}). In particular, our results extend the conclusions of  \cite{majumdarzarnescu10,nguyenzarnescu13,singperturb} to any $L_1$, $L_2$ and $L_3$ such that $W$ remains positive definite. Note that another, physically motivated potential $f(Q)$ was introduced in \cite{ballmajumdar10}, to which it would be interesting to extend our analysis.

In the isotropic case 
\begin{equation*}
W_{iso}(\nabla u)= \abs{\nabla u}^2\text{ (or }\abs{\nabla_{\!g} u}^2\text{ for some Riemannian metric }g\text{),}
\end{equation*}
minimizers of $E_\star$ are $\mathcal N$-valued harmonic maps. They are smooth outside a rectifiable singular set of dimension at most $(n-3)$, and the convergence of $u_\e$ towards $u_\star$ is locally uniform away from this singular set and from the boundary \cite{chenlin93,majumdarzarnescu10,singperturb}. Moreover, for fixed $\mathcal N$-valued Dirichlet boundary conditions, the convergence is also uniform up to the boundary \cite{nguyenzarnescu13,singperturb}.  For the Ginzburg-Landau potential  $k=2$, $f(u)=(1-\abs{u}^2)^2$, uniform convergence up to the boundary is also obtained for weak anchoring in the special case $g(x,u)=\abs{u-u_b(x)}^2$ in \cite{BPW18}.

For more general anisotropic elastic energies, the regularity of minimizers of $E_\star$ is not fully understood. It is known that the singular set has dimension strictly less than $n-2$ \cite{hkl88,hong04}, but due to the failure of the energy monotonicity formula, Federer's dimension reduction argument can not be applied to show that the singular set has dimension at most $n-3$. It is an open problem to find the optimal estimate on the dimension of the singular set for these anisotropic harmonic maps, but not only that, the uniform convergence away from the singular set has also proved to be an elusive question due to the technical limitations of the classical approaches, mainly derived from the theory of harmonic maps. Here we address this  open question and extend the results in \cite{singperturb} to include, for the first time, anisotropic elastic energies.

To shed some light on the  underlying difficulties, we mention that the available proofs of uniform convergence for the isotropic energy \cite{majumdarzarnescu10,nguyenzarnescu13,singperturb} follow the strategy of \cite{chenlin93}, inspired by \cite{schoen84} (related results can be found in \cite{BBH1}  for $n=2$, and \cite{BPW18} for higher dimensions, in the case of the Ginzburg-Landau potential). The main tool is  a small energy estimate, which relies on 3 crucial ingredients:
\begin{itemize}
\item a uniform $L^\infty$ bound $\norm{u_\e}_{L^\infty}\leq M$,
\item a \enquote{Bochner type} inequality $-\Delta e_\e \lesssim e_\e^2$ satisfied by $e_\e=\frac 12 \abs{\nabla u_\e}^2+\e^{-2}f(u_\e)$,
\item and a monotonicity formula for the renormalized energy $\frac{d}{dr}[r^{2-n}E_\e(u_\e;B_r)]\geq 0$.
\end{itemize} 
All three of these ingredients do not seem to be available in the anisotropic case. 
 We circumvent these difficulties by using variational arguments (in contrast with the PDE ones used in the isotropic case). Moreover, our variational approach allows us to obtain uniform estimates at the boundary for  strong and weak anchoring, while the previous methods could only deal with strong anchoring (see \cite{singperturb}). 

\subsection*{Locally uniform convergence for general anisotropic energies.}

As pointed out before, we already know that a subsequence of minimizers $u_\e$ of the functionals $E_\e$ (resp. $F_\e$) converges in $H^1$ to a generalized harmonic map $u_\star$, i.e. a minimizer of $E_\star$ (resp. $F_\star$). In order to improve this to uniform convergence away from the singular set $\mathcal{S}$  of $u_\star$, we  establish  uniform H\"{o}lder bounds for $u_\e$ on compact subsets of $\overline{\Omega}\setminus\mathcal{S}$. Classically, this is done by means of a small energy estimate:
\[r^{2\alpha}|u_\e|^2_{C^\alpha(B(x_0,r))}\leq C(W,f) r^{2-n}E_\e (u_\e; B(x_0, 2r)) , \] 
for a constant $C(W,f)$ depending on the specified parameters (but not on $\e$), provided the renormalized energy $r^{2-n}E_\e (u_\e; B(x_0, 2r))$ is small enough. Granted such estimate, $H^1$ convergence automatically improves to uniform convergence away from $\mathcal S$, since there the renormalized energy of $u_\star$ is small. Therefore we will concentrate on proving small energy estimates, in the interior and at the boundary.

For $u_\star$, an equivalent of this small energy estimate is indeed valid, and at the core of the regularity theory in \cite{hkl86,luckhaus88}. However its proof relies strongly on the scaling invariance of the energy $E_\star$. It is at this level that a big difference arises:  our perturbed energy $E_\e$ contains two terms which scale differently. This is reflected in the presence of a characteristic length scale $\e$. At scales larger than $\e$ we expect minimizers to behave like generalized harmonic maps (i.e. minimizers of $u_\star$). As we move to finer scales, the particular shape of the potential $f$ plays a more prominent role, and thus our minimizer resembles less and less a harmonic map. Note that in the isotropic case this effect can be somewhat controlled, as the monotonicity formula ensures that if the energy is small at one scale, then it is automatically small at all smaller scales.

Of the three crucial ingredients for the small energy estimate, which are present in the isotropic case but not here -- namely, the uniform pointwise bound, the Bochner equation satisfied by the energy density, and the energy monotonicity formula -- the uniform pointwise bound $\norm{u_\e}_\infty\leq M$ turns out to be the most problematic. We manage in fact to develop a general method that needs neither Bochner equation nor energy monotonicity, but, in order to avoid assuming an \textit{a priori} $L^\infty$ bound, we need to restrict ourselves to dimension $n=3$  and potentials $f$ satisfying some additional technical assumption. These restrictions are satisfied in the physically relevant case of the Landau-de Gennes functional. To the best of our knowledge, this work is the first to treat this more general model.

Specifically, we always make the following two generic assumptions:
\begin{itemize}
\item  $W(x,\nabla u)$ is a positive definite quadratic form on $\R^{k\times n}$ with coefficients depending smoothly on $x$, i.e. $W(x,\nabla u)=a_{ij}^{\alpha\beta}(x)\partial_i u^\alpha\partial_j u^\beta$ with
\begin{equation}\tag{A1}\label{A1}
\lambda \abs{\xi}^2\leq a_{ij}^{\alpha\beta}\xi_i^\alpha\xi_j^\beta\leq\Lambda \abs{\xi}^2\quad\forall\xi\in\R^{k\times n},\qquad\text{and }\norm{a_{ij}^{\alpha\beta}}_{C^1}\leq\Lambda,
\end{equation}
for some $\Lambda>\lambda>0$,
\item  $f$ vanishes nondegenerately on $\mathcal N$, i.e. $\nabla^2 f(z)$ restricted to $(T_z\mathcal N)^\perp$ is positive definite for all $z\in\mathcal N$, and $f$ does not vanish at infinity. This implies (see e.g. \cite{singperturb}) that
\begin{equation}\tag{A2}\label{A2}
f(z)\lesssim \dist^2(z,\mathcal N)\lesssim f(z)\quad\text{for }z\text{ close enough to }\mathcal N,\qquad\text{ and } \liminf_{\abs{z}\to\infty} f(z)>0.
\end{equation}
\end{itemize}
 Here and throughout the article, the symbol $\lesssim$ will denote inequality up to a multiplicative constant that depends only on the fixed parameters ($f$, $W$, $g$), unless otherwise specified.

But, in addition to these natural requirements, we will assume either
  that there exists $M>0$ such that
\begin{equation}\tag{A3a}\label{A3a}
\norm{u_\e}_\infty\leq M\qquad\forall \e>0,
\end{equation}
or  that
\begin{equation}\tag{A3b}\label{A3b}
\begin{gathered}
n=3,\text{ and there exist }p>\frac 32\text{ and }\frac 12 \leq a \leq \min\left(\frac 45,\frac 43 -\frac 1p\right),\\
\text{ such that }\abs{\nabla f(z)}\lesssim \abs{z}^{\frac 6p}\text{ and }\abs{\nabla f(z)}\lesssim f(z)^a\text{ as }\abs{z}\to\infty.
\end{gathered}
\end{equation}

\begin{rem}\label{r:A3a}
We actually expect the pointwise bound \eqref{A3a} to hold true for minimizers of $E_\e$ under rather mild conditions on $f$. In the isotropic case, one only needs to assume that $u\cdot \nabla f(u)\geq 0$ for $\abs{u}\geq M$ (in fact this is valid for all critical points that satisfy the Euler-Lagrange equations \cite{lamy14}), but establishing this estimate in the anisotropic case turns out to be surprisingly difficult. To shed more light on this issue, note that one obvious difference between isotropic and general anisotropic $W$'s is that, in the isotropic case, the linear second order elliptic operator $\mathcal L$ associated to critical points of $\int W(\nabla u)$ comes in the form of a scalar operator  acting separately on each component, while in the anisotropic case it really couples all components. It is known that, even for minimizers of $\int W(\nabla u)$ without any constraint, i.e. solutions of $\mathcal Lu=0$, the maximum principle $\norm{u}_{L^\infty(\Omega)}\leq \norm{u}_{L^\infty(\partial\Omega)}$ holds in this sharp form if and only if $\mathcal L$ is (up to a linear change of variables) isotropic \cite[Theorem~2.4]{kresinmazya}. In general one only has $\norm{u}_{L^\infty(\Omega)}\leq C \norm{u}_{L^\infty(\partial\Omega)}$ for some $C>1$. In other words the elastic term $W(\nabla u)$ does  somehow penalize large pointwise values of $u$, but not as sharply as it does in the isotropic case: therefore one can not hope to directly generalize the isotropic arguments for \eqref{A3a} to the anisotropic case.
\end{rem}

The following theorem is a corollary of our main results Theorems~\ref{t:int}, \ref{t:bdry_D} and \ref{t:bdry_WA_a}.
\begin{thm}\label{t:summary}
 Let $(u_{\e_k})$ be a sequence of  minimizers of $E_{\e_k}$ in $\Omega$ (with respect to their own boundary conditions) and assume that $u_{\e_k}$ converges strongly in $H^1$ to $u_\star\in C^\alpha(\overline\Omega\setminus\mathcal S)$. Assume \eqref{A1}-\eqref{A2}, and moreover that either \eqref{A3a} or \eqref{A3b} holds. Then we have:
\begin{itemize}
\item \emph{Interior convergence:}
\begin{equation*}
u_{\e_k}\longrightarrow u_\star\text{ locally uniformly in }\Omega\setminus \mathcal S,
\end{equation*}
\item \emph{Convergence up to the boundary for strong anchoring:} if $u_{\e_k}=u_b$ on $\partial\Omega$ for some fixed $u_b\in C^2(\partial\Omega;\mathcal N)$, then
\begin{equation*}
u_{\e_k}\longrightarrow u_\star\text{ locally uniformly in }\overline\Omega\setminus \mathcal S,
\end{equation*}
\item \emph{Convergence up to the boundary for weak anchoring:} if $u_{\e_k}$ minimizes $F_{\e_k}$ \eqref{eq:F_wa}, and \eqref{A3a} holds, then
\begin{equation*}
u_{\e_k}\longrightarrow u_\star\text{ locally uniformly in }\overline\Omega\setminus \mathcal S.
\end{equation*}
\end{itemize}
\end{thm}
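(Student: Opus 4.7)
The plan is to deduce Theorem~\ref{t:summary} directly from the small-energy Hölder bounds provided by Theorems~\ref{t:int}, \ref{t:bdry_D} and \ref{t:bdry_WA_a}. These give a universal threshold $\eta_0>0$ such that whenever the rescaled energy $r^{2-n}E_{\e}(u_\e;B(x_0,2r)\cap\Omega)$ (augmented by the appropriate boundary contribution in the anchoring cases) is below $\eta_0$, one has a uniform-in-$\e$ estimate of the form $r^{2\alpha}[u_\e]_{C^\alpha(B(x_0,r)\cap\Omega)}^2 \lesssim r^{2-n}E_\e(u_\e;B(x_0,2r)\cap\Omega)$. The entire task is therefore to transfer the obvious smallness of the rescaled energy of $u_\star$ away from $\mathcal S$ to the sequence $u_{\e_k}$, and then to conclude by Arzel\`a--Ascoli.

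Concretely, I would fix a compact $K\subset\overline\Omega\setminus\mathcal S$ and choose $r_0>0$ small enough so that for every $x_0\in K$ the ball $B(x_0,2r_0)\cap\overline\Omega$ lies in the regular region of $u_\star$. Since $u_\star\in C^\alpha(\overline\Omega\setminus\mathcal S)$ has bounded gradient near $K$, $r^{2-n}\int_{B(x_0,2r)\cap\Omega} W(x,\nabla u_\star)$ is $O(r^2)$ uniformly in $x_0\in K$, and can be made $<\eta_0/3$ by shrinking $r_0$. Similarly, in the strong anchoring case the rescaled trace energy near $\partial\Omega$ is controlled by the $C^2$ regularity of $u_b$, and in the weak anchoring case the boundary term $r^{1-n}\int_{\partial\Omega\cap B(x_0,2r)} g(x,u_\star)$ is $O(r)$ since $u_\star$ is bounded and $g$ is $C^2$. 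The $H^1$ convergence then transfers the smallness of the elastic part to $u_{\e_k}$, and the smallness of the boundary contribution is inherited via trace compactness.

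The main obstacle is the potential term $\e_k^{-2}\int_B f(u_{\e_k})$, which is not directly controlled by $H^1$ convergence. To deal with it, I would invoke convergence of the full energies: since $u_{\e_k}$ is a minimizer and $u_\star$ is $\mathcal N$-valued with $f(u_\star)=0$, a standard recovery sequence argument gives $\limsup_k E_{\e_k}(u_{\e_k};\Omega)\leq E_\star(u_\star;\Omega)$, while weak lower semicontinuity on the complement of any ball produces the reverse inequality after localisation. Combining the two yields $\lim_k \e_k^{-2}\int_B f(u_{\e_k})=0$ on every ball $B\Subset\Omega$ (and similarly for the weak anchoring boundary term, under \eqref{A3a}). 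Hence $r_0^{2-n}E_{\e_k}(u_{\e_k};B(x_0,2r_0)\cap\Omega)<\eta_0$ for $k$ large, uniformly in $x_0\in K$, and the small-energy estimate produces a uniform Hölder bound on each $B(x_0,r_0)\cap\overline\Omega$.

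Covering $K$ by finitely many such balls yields a uniformly bounded and equicontinuous family $\{u_{\e_k}\}$ on $K$ (uniform boundedness being furnished by \eqref{A3a} directly or, under \eqref{A3b}, by the pointwise bound that enters the proof of Theorem~\ref{t:int}). By Arzel\`a--Ascoli every subsequence admits a further subsequence converging uniformly on $K$; the limit must coincide with $u_\star$ by the hypothesised strong $H^1$ convergence, and uniqueness of the limit then forces the whole sequence to converge uniformly on $K$. The three bulleted conclusions correspond to applying this argument with the interior estimate of Theorem~\ref{t:int}, the Dirichlet boundary estimate of Theorem~\ref{t:bdry_D}, or the weak anchoring boundary estimate of Theorem~\ref{t:bdry_WA_a}, respectively.
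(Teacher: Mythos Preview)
Your approach is correct and matches what the paper intends: Theorem~\ref{t:summary} is presented there simply as a corollary of Theorems~\ref{t:int}, \ref{t:bdry_D} and \ref{t:bdry_WA_a}, with the passage from $H^1$ to uniform convergence dismissed in one sentence (``Granted such estimate, $H^1$ convergence automatically improves to uniform convergence away from $\mathcal S$'').

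Two minor inaccuracies are worth flagging. First, the assertion that ``$u_\star\in C^\alpha$ has bounded gradient near $K$'' is false as written; $C^\alpha$ regularity does not imply anything about $\nabla u_\star$. The correct reason that $r^{2-n}\int_{B(x_0,2r)}W(x,\nabla u_\star)$ is small is that $u_\star$, being the strong $H^1$ limit of minimizers, is a minimizing $W$-harmonic map (Proposition~\ref{p:compact}) and hence smooth away from $\mathcal S$ by the regularity theory of \cite{hkl86,luckhaus88}. Second, under \eqref{A3b} there is no ``pointwise bound that enters the proof of Theorem~\ref{t:int}''; indeed Remark~\ref{r:A3b} states explicitly that a uniform $L^\infty$ bound is \emph{not} established in that case. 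For Arzel\`a--Ascoli you should instead deduce uniform boundedness on $K$ from the H\"older seminorm bound combined with the $L^1$ (or $L^2$) convergence to the bounded limit $u_\star$: for any $x\in K$ and small $r$, $|u_{\e_k}(x)|\le \bigl|\fint_{B_r(x)}u_{\e_k}\bigr| + Cr^\alpha$, and the average converges.
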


In the proof of Theorem~\ref{t:summary} we overcome the lack of Bochner equation and monotonicity formula by adopting a more fundamental, but flexible approach.  Establishing a H\"{o}lder estimate amounts to proving a suitable energy decay on small balls, and we do this in two steps. On balls of radii much larger than $\e$ this decay is obtained by means of variational arguments available in the harmonic map literature: carefully constructed comparison maps lead to an energy improvement estimate, that establishes energy decay from one fixed scale to another fixed smaller scale. This may be iterated as long as the scale remains much larger than $\e$ and constitutes the first step. In the second step we deal with scales of order $\e$ and below. There, the energy decay is obtained from elliptic estimates for a fixed $\e_0$. We exploit the shape of the potential in a decisive manner to be able to connect these estimates to the ones in the first step. This is where the uniform $L^\infty$ bound \eqref{A3a} plays a crucial role, and in its absence we have to resort to the technical assumption \eqref{A3b}.

\begin{rem}\label{r:A3b} A few observations about this technical assumption  are in order.
\begin{itemize}
\item Note that under assumption \eqref{A3b}, we are able to obtain the uniform convergence away from $\mathcal S$, but we still do not know if  the uniform bound \eqref{A3a} holds.
\item Note also that in the weak anchoring case we cannot avoid assuming \eqref{A3a}. We will comment more about this in \S~\ref{ssWA}.
\item Finally, we point out that \eqref{A3b}
consists of two growth requirements on $\nabla f$ in dimension $n=3$. The first growth requirement $\abs{\nabla f(z)}\lesssim\abs{z}^{6/p}$ is fairly natural; it allows to obtain H\"older continuity of any solution $u$ of the Euler-Lagrange equations for a fixed $\e_0>0,$ via classical arguments relying on Calderon-Zygmund estimates. On the other hand, the second hypothesis $\abs{\nabla f}\lesssim f^a$ is, admittedly, less natural but it is what ultimately allows us to make the connection with the estimates at large scales obtained in the first step.  Concerning the first growth requirement, alternate hypotheses -- that also apply to the Landau-de Gennes energy -- are available, but we choose the current presentation due to its transparency.
\end{itemize}
\end{rem}

\subsection*{Landau-de Gennes with three distinct elastic constants.}

In the physically relevant case of the Landau-de Gennes potential, assumption \eqref{A3b} is satisfied with $p=2$ and $a=3/4$,  since $f_{LdG}(Q)\gtrsim\abs{Q}^4$ and $\abs{\nabla f_{LdG}(Q)}\lesssim\abs{Q}^3$ as $\abs{Q}\to\infty$. Thus Theorem~\ref{t:summary} is the first to provide an unconditional result in this context. More explicitly, specializing to the Landau-de Gennes model, Theorem \ref{t:summary} says:

\begin{thm}\label{TheoLdG} 
Let $\Omega\subseteq \R^3$ a smooth bounded domain,  $(Q_\e)_{\e>0}\subseteq H^1(\Omega;\mathcal{S}_0)$ be a family of minimizers with respect to their own boundary conditions of the Landau-de Gennes energy
\[E^{LdG}_\e (Q):= \int_\Omega \left( W_{LdG} (\nabla Q)+ \frac{1}{\e^2}f_{LdG} (Q) \right)\, dx ,\]
where $W_{LdG} $ and $f_{LdG} $ are defined in \eqref{WLdG} and \eqref{fLdG} respectively. Assume the elastic constants $L_1, L_2,$ and $L_3$ satisfy \eqref{elastWpos}. Then
\begin{itemize}
\item[a)] There exists a subsequence $\e_k\to 0,$ such that the maps $Q_{\e_k}$ converge to an $\mathcal{N}$-valued, $W_{LdG}$-harmonic map $Q_\star$ strongly in $H^1$ and locally uniformly in $\Omega\setminus \mathcal S$, where $\mathcal S$ is the singular set of $Q_\star.$ 

\item[b)] If in addition we assume that there exists a $C^2$ map $Q_b:\partial \Omega\to \mathcal{N}$ such that $Q_\e =Q_b$ on $\partial\Omega$,  
then the convergence is locally uniform up to the boundary, that is, in $\overline{\Omega}\setminus \mathcal{S}.$ 

\item[c)] Given any $C^2$ function $g:\partial \Omega\times \mathcal{S}_0 \to [0, \infty)$,  and under the additional assumption that 
\begin{equation*}
\quad\sup_{\e >0} \norm{Q_\e}_\infty <\infty,
\end{equation*}
the same conclusion holds  for minimizers of the weak anchoring energy $F^{LdG}_\e (Q):=E^{LdG}_\e (Q) +\int_{\partial \Omega} g(x, Q)$, that is, the convergence is locally uniform in $\overline\Omega\setminus\mathcal S$.
\end{itemize}
\end{thm}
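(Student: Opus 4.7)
The plan is to deduce Theorem~\ref{TheoLdG} directly from Theorem~\ref{t:summary}: parts (a), (b), (c) correspond, respectively, to the three bullets of Theorem~\ref{t:summary} specialized to $W=W_{LdG}$, $f=f_{LdG}$, and $\mathcal{N}$ the LdG vacuum manifold. Thus the proof reduces to four tasks: verify the structural hypotheses \eqref{A1}, \eqref{A2}, \eqref{A3b} (and note that \eqref{A3a} is assumed in (c)); extract a strongly $H^1$-convergent subsequence; identify the limit $Q_\star$ as a $W_{LdG}$-harmonic map; and record that $Q_\star\in C^\alpha(\overline\Om\setminus\mathcal{S})$ off its singular set, so that Theorem~\ref{t:summary} applies. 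No small-energy decay or boundary anchoring analysis needs to be redone, as all such work is already contained in Theorem~\ref{t:summary}.

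The verification of \eqref{A1} and \eqref{A2} is classical. Under \eqref{elastWpos}, $W_{LdG}(\nabla Q)$ is (necessarily and sufficiently, see \cite{kitavtsev16}) positive definite on $\R^{5\times 3}$; the coefficients are constant so \eqref{A1} is immediate. The set $\mathcal{N}$ is a smooth closed submanifold of $\mathcal{S}_0$ along which $f_{LdG}$ vanishes nondegenerately (a standard Hessian computation done in \cite{majumdarzarnescu10}), and the quartic term $c^2|Q|^4$ forces $f_{LdG}(Q)\gtrsim |Q|^4$ as $|Q|\to\infty$, giving both the local equivalence $f_{LdG}(Q)\sim \mathrm{dist}^2(Q,\mathcal N)$ near $\mathcal{N}$ and the nonvanishing at infinity required by \eqref{A2}. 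For \eqref{A3b}, the natural choice is $p=2$ and $a=3/4$, which satisfy $p>3/2$ and $1/2\le a\le\min(4/5,4/3-1/p)=\min(4/5,5/6)=4/5$. Direct differentiation yields $\nabla f_{LdG}(Q)=2a^2Q-3b^2(Q^2-\tfrac{1}{3}\mathrm{tr}(Q^2)I)+4c^2|Q|^2 Q$, so $|\nabla f_{LdG}(Q)|\lesssim |Q|^3=|Q|^{6/p}$; combining with $f_{LdG}(Q)\gtrsim |Q|^4$ gives $|\nabla f_{LdG}(Q)|\lesssim (|Q|^4)^{3/4}\lesssim f_{LdG}(Q)^{3/4}$, so \eqref{A3b} holds.

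For the subsequence and the identification of the limit, the local $H^1$ compactness of Appendix~\ref{a:compact} produces, along some $\e_k\to 0$, a strong $H^1_{\loc}(\Om;\mathcal{N})$ limit $Q_\star$; under the fixed Dirichlet datum in (b), or the weak anchoring of (c), the convergence extends to strong $H^1(\Om)$ by standard $\Gamma$-convergence (cf.\ \cite{majumdarzarnescu10}). Lower semicontinuity shows that $Q_\star$ minimizes $E_\star$ (resp.\ $F_\star$) among $\mathcal{N}$-valued maps, hence is a $W_{LdG}$-harmonic map; the anisotropic partial regularity theory of \cite{hkl88,hong04} then provides a relatively closed singular set $\mathcal{S}\subset\overline\Om$ of Hausdorff dimension strictly less than $n-2=1$, off which $Q_\star$ is H\"older continuous, with regularity up to the boundary in case (b) inherited from the $C^2$ boundary datum $Q_b$. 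At this point every hypothesis of Theorem~\ref{t:summary} is in place and we invoke it to conclude (a), (b), (c).

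The main point to watch, rather than a genuine obstacle since all of the analytic heavy lifting is already done inside Theorem~\ref{t:summary}, is the verification that the quartic LdG potential fits inside the narrow window prescribed by \eqref{A3b}: the value $a=3/4$ is forced by the quartic scaling and must lie in $[1/2,4/5]$, while $p=2$ must exceed $3/2$ and satisfy $4/3-1/p\ge a$; this is precisely the range the abstract assumption was designed to accommodate, and the coincidence is what allows Theorem~\ref{t:summary} to apply without an \textit{a priori} $L^\infty$ bound in cases (a) and (b).
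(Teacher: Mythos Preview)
Your proposal is correct and follows essentially the same approach as the paper: the paper presents Theorem~\ref{TheoLdG} explicitly as the specialization of Theorem~\ref{t:summary} to the Landau-de Gennes setting, verifying \eqref{A3b} with $p=2$, $a=3/4$ via $f_{LdG}(Q)\gtrsim |Q|^4$ and $|\nabla f_{LdG}(Q)|\lesssim |Q|^3$ at infinity. Your write-up is in fact more detailed than the paper's, spelling out the verification of \eqref{A1}--\eqref{A2}, the compactness step, and the partial regularity of $Q_\star$ needed to feed into Theorem~\ref{t:summary}.
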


The paper is organized as follows: in the next section we prove the fundamental lemmas that imply energy decay at the different scales, and use these to prove the interior small energy estimate.  In Section~\ref{s:bdry} we outline the adaptations needed to handle the boundary estimates, which follow the same general strategy as for the interior, but where technical differences make the proofs more delicate. The paper finishes with two appendices where we prove a technical boundary modification lemma, and the strong $H^1_{loc}$ compactness of bounded energy sequences.

\vspace*{1em}
\noindent\textbf{Acknowledgments.} The work of A.C. was partially supported by a grant from the
Simons Foundation $\#$ 426318. The work of X.L. was partially supported by the ANR project ANR-18-CE40-0023.
The authors wish to thank R\'emy Rodiac for many useful discussions, and Changyou Wang for pointing out a gap in the initial proof of Lemma~\ref{l:largescales}. A.C. would like to thank the Institut de Math\'ematiques de Toulouse and Universit\'e Paul Sabatier for hosting him as an invited professor which allowed for the conclusion of this work.

\section{Interior estimates}\label{s:int}

In this section we prove the interior small energy estimate.

\begin{thm}\label{t:int}
Assume that $f$ satisfies \eqref{A2}, that $W$ satisfies \eqref{A1}, and moreover that either \eqref{A3a} or \eqref{A3b} holds.
There exist $\delta,\e_0>0$ and $\alpha\in (0,1)$ (depending on $\lambda$, $\Lambda$ and $f$, and under \eqref{A3a} also on $M$) such that for any $r_0\in (0,1)$, $\e\in (0,r_0\e_0)$, and any
 $u_\e$ minimizing $E_\e(\cdot;B_{2r_0})$ with respect to its own boundary conditions,
\begin{equation*}
(2r_0)^{2-n}E_\e(u_\e;B_{2r_0}) \leq \delta^2\quad\Longrightarrow\quad r_0^{2\alpha} \abs{u_{\e_\ell}}^{2}_{C^\alpha(B_{r_0})}\lesssim  (2r_0)^{2-n}E_\e(u_\e;B_{2r_0}),
\end{equation*}
where the constant in the last inequality depends on $\lambda$, $\Lambda$ and $f$, and under \eqref{A3a} also on $M$.
\end{thm}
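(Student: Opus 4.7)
The plan is to establish a decay estimate
\[
\rho^{2-n}E_\e(u_\e;B_\rho(x_0))\lesssim\left(\frac{\rho}{r_0}\right)^{2\alpha}r_0^{2-n}E_\e(u_\e;B_{r_0}(x_0))
\]
on shrinking balls, which by a standard Morrey--Campanato argument (using $f\gtrsim\dist^2(\cdot,\mathcal N)$ to absorb the potential term) converts into the claimed Hölder bound. After rescaling $x\mapsto r_0 x$ I may assume $r_0=1$ and $\e<\e_0$, and the $x$-dependence of the coefficients $a_{ij}^{\al\bt}$ can be absorbed by freezing them at the center of the ball, with the resulting error controlled by $\delta$. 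The heart of the matter is then to prove a \emph{one-step improvement}
\[
(\theta\rho)^{2-n}E_\e(u_\e;B_{\theta\rho})\le\theta^{2\alpha}\rho^{2-n}E_\e(u_\e;B_\rho)
\]
for some fixed $\theta\in(0,1/2)$ and $\alpha\in(0,1)$, whenever the right-hand side is small. I would prove this separately in two regimes according to whether $\rho$ is large or small compared to $\e$.

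Regime 1 (large scales, $\rho\ge C\e$): I would exploit minimality by comparing $u_\e$ to a competitor $w$ on $B_\rho$ with $w=u_\e$ on $\partial B_\rho$. On a thin annulus $B_\rho\setminus B_{(1-\sigma)\rho}$ I would use a Luckhaus-type interpolation (as in the regularity theory for minimizing harmonic maps, cf.\ \cite{luckhaus88,hkl86}) to join the boundary trace of $u_\e$ with its $\mathcal N$-valued projection, paying an energy cost controlled by $\sigma^{-1}\rho^{2-n}E_\e(u_\e;B_\rho)$ plus a term of order $(\e/\rho)^2$; on $B_{(1-\sigma)\rho}$ I would take a minimizer of $E_\star$ with the prescribed $\mathcal N$-valued data. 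Using the small-energy Hölder regularity of $W$-harmonic maps (of some exponent $\alpha_0>0$, following \cite{hkl88,hong04}), the energy of this minimizer decays like $\theta^{2\alpha_0}$ on $B_{\theta\rho}$, and the potential contributes nothing since the competitor is $\mathcal N$-valued. Tuning $\sigma$, $\theta$, $\delta$ then gives the one-step improvement.

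Regime 2 (small scales, $\rho\le C\e$): I would rescale $v(y)=u_\e(\e y)$ so that $v$ satisfies the fixed-$\e$ Euler--Lagrange system $-\partial_i(a_{ij}^{\al\bt}\partial_j v^\bt)+\partial_\alpha f(v)=0$ on the unit ball. Under (A3a) the right-hand side is uniformly bounded, and linear theory ($W^{2,p}$ Calderón--Zygmund followed by Sobolev embedding) yields $v\in C^{0,\alpha}$; under (A3b), the growth $|\nabla f(z)|\lesssim|z|^{6/p}$ combined with the Sobolev embedding $H^1(B_1)\hookrightarrow L^6$ in dimension $n=3$ gives $\nabla f(v)\in L^p$, and Calderón--Zygmund with $p>3/2$ then produces $v\in W^{2,p}\hookrightarrow C^{0,\alpha}$ with $\alpha=2-3/p$. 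Undoing the rescaling gives the small-scale decay $\rho^{2-n}E_\e(u_\e;B_\rho)\lesssim(\rho/\e)^{2\alpha}\e^{2-n}E_\e(u_\e;B_\e)$.

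The main obstacle, and where the technical assumption (A3b) is really used in the absence of an a priori $L^\infty$ bound, is bridging the two regimes near $\rho\sim\e$. The large-scale iteration yields only $H^1$ control of $u_\e$ on $B_\e$, which by Sobolev gives $L^6$ control \emph{in integral average} but not the uniform $L^6$ needed to feed the growth bound on $\nabla f$ into Calderón--Zygmund on a rescaled unit ball. The second half of (A3b), namely $|\nabla f|\lesssim f^a$, closes this gap: the energy bound $\e^{-2}\int f(u_\e)\lesssim 1$ upgrades to an $L^{1/a}$ control on $\e^{-2}\nabla f(u_\e)$, and the constraints $a\le 4/3-1/p$ and $a\le 4/5$ are precisely those required for this improved integrability to match the $L^p$ right-hand side in the rescaled problem without loss of scaling. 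Once both one-step improvements are in hand, I iterate -- Regime~1 down to scale $\sim\e$, then Regime~2 from scale $\sim\e$ to $0$ -- to obtain the full energy decay, from which the Hölder estimate on $B_1$ follows by Campanato.
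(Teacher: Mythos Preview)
Your two-regime structure, the iteration, and the small-scale argument (Regime~2) all match the paper's strategy closely; the paper packages these as Lemma~2.2 (large scales) and Lemma~2.4 (small scales), and the proof of Theorem~2.1 is essentially the iteration you describe. Your description of why the second half of \eqref{A3b} is needed --- to get a bound in terms of the energy rather than a mere qualitative H\"older estimate --- is also correct in spirit, though the actual implementation in the paper is more delicate than an $L^{1/a}$ bound: it requires a Sobolev/Young-inequality absorption argument iterated over an increasing family of radii.

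There is, however, a genuine gap in your Regime~1. Your comparison argument yields, by minimality,
\[
E_\e(u_\e;B_\rho)\le E_\e(w;B_\rho),
\]
and you then observe that the $W$-harmonic minimizer $w$ has good energy decay on $B_{\theta\rho}$. But nothing in this says anything about $E_\e(u_\e;B_{\theta\rho})$: energy decay for the \emph{competitor} on a smaller ball does not transfer to $u_\e$. The missing ingredient is a strong $H^1$-closeness statement between $u_\e$ and the $\mathcal N$-valued minimizer on the smaller ball, and this cannot be read off from the single energy comparison above.

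The paper closes this gap by a contradiction/compactness argument rather than a direct comparison. One takes a sequence $u_\ell$ with $E_{\e_\ell}(u_\ell;B_1)=\delta_\ell^2\to 0$ for which decay fails, builds the $\mathcal N$-valued minimizer $\overline w_\ell$, and then rescales \emph{both} maps by the energy scale $\alpha_\ell$, obtaining $v_\ell=\alpha_\ell^{-1}(\overline w_\ell-z_\ell)$ and $\tilde v_\ell=\alpha_\ell^{-1}(u_\ell-z_\ell)$. The limit $v$ is a minimizer under the \emph{linear} constraint $v\in T_z\mathcal N$ (so classical elliptic regularity applies), $v_\ell\to v$ strongly, and --- this is the key step your sketch omits --- a bilinear-form expansion using the Euler--Lagrange equation for $v$ shows
\[
\int_{B_1} W(\nabla \overline w_\ell-\nabla u_\ell)+\frac{1}{\e_\ell^2}\int_{B_1} f(u_\ell)=o(\alpha_\ell^2).
\]
It is only after this closeness is established that the energy decay of $\overline w_\ell$ (inherited from $v$) can be transported to $u_\ell$, contradicting the assumption. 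Without this step, ``tuning $\sigma,\theta,\delta$'' will not by itself produce the one-step improvement for $u_\e$.
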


We prove Theorem~\ref{t:int} by obtaining uniform bounds for the decay of the energy on small balls.
Our proof reflects the fact that different scales are at stake in this problem, due to the different homogeneities of the two terms in $E_\e$. At scales larger than $\e$ the decay of the Dirichlet energy is inherited from the small energy regularity of (anisotropic) \enquote{harmonic} maps (Lemma~\ref{l:largescales}). At scales smaller than $\e$ it is inherited from regularity estimates for elliptic systems (Lemma~\ref{l:smallscales}). This is where the absence of a uniform $L^\infty$ bound (that was easy to obtain in the isotropic case) is an issue and we have to either assume it \eqref{A3a} or to require $f$ to satisfy the technical assumption \eqref{A3b}.

\subsection{Energy decay at larges scales}

We begin by { an energy improvement lemma which implies} that $u_\e$ has the { energy} decay of minimizing $W$-harmonic maps at scales larger than $\e.$ This is the only part of our argument where the minimality of $u_\e$ is crucial; at smaller scales the decay comes from the regularity of solutions to a PDE. It is worth mentioning that { energy} decay of stationary $W$-harmonic maps is not known and that mere criticality is not sufficient for this property to hold even in the isotropic case. { Thus, to obtain similar H\"older estimates for critical points of $E_\e$ one would need to avoid the contradiction argument which relies on the regularity of $W$-harmonic maps.}

\begin{lem}\label{l:largescales}
Assume \eqref{A1} and \eqref{A2}.
There exist $\delta_0,\e_0>0$ and $\theta_0\in (0,1/2)$ (depending on $\lambda$, $\Lambda$ and $f$) such that any minimizer $u_\e$ of $E_\e(\cdot;B_1)$ with $\e\in (0,\e_0)$  satisfies
\begin{equation*}
E_\e(u_\e;B_1) \leq \delta_0^2\quad\Longrightarrow \quad \theta_0^{2-n}E_\e(u_\e;B_{\theta_0}) \leq \frac 1{2} E_\e(u_\e;B_1).
\end{equation*}
\end{lem}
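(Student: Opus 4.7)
My approach is a compactness-contradiction argument, reducing to the small-energy regularity of $\mathcal N$-valued minimizing $W$-harmonic maps via a Luckhaus-type comparison.

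Suppose the lemma fails. Extract sequences $\e_k \to 0$ and minimizers $u_k$ of $E_{\e_k}(\cdot; B_1)$ with $\eta_k^2 := E_{\e_k}(u_k;B_1) \to 0$ but $\theta_0^{2-n} E_{\e_k}(u_k;B_{\theta_0}) > \tfrac12 \eta_k^2$, for a $\theta_0 \in (0,1/2)$ to be chosen. Since $\int_{B_1} f(u_k) \leq \e_k^2 \eta_k^2 \to 0$, the strong $H^1_{loc}$-compactness from Appendix~\ref{a:compact} yields, along a subsequence, an $\mathcal N$-valued minimizer $u_\star$ of $E_\star$ with $u_k \to u_\star$ in $H^1_{loc}(B_1)$. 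The bound $\int W(x,\nabla u_\star) \leq \liminf \eta_k^2 = 0$ forces $u_\star \equiv p_0 \in \mathcal N$, so $u_k \to p_0$ strongly in $H^1_{loc}$.

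Next, I would compare $u_k$ with a purposefully built $\mathcal N$-valued competitor. A Fubini argument on $r \in (\theta_0, 2\theta_0)$ provides a good slice $\theta_k$ along which
\[
\int_{\partial B_{\theta_k}}\Bigl(W(x,\nabla u_k) + \e_k^{-2} f(u_k) + \theta_0^{-2}\abs{u_k - p_0}^2\Bigr)\,d\sigma \lesssim \theta_0^{-1}\eta_k^2.
\]
Using \eqref{A2} and the $L^2$-smallness of $f(u_k)$ on this slice, the nearest-point projection $\tilde u_k := \pi_{\mathcal N}(u_k)\!\restriction_{\partial B_{\theta_k}}$ is well-defined off an exceptional set of small measure and inherits analogous estimates. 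Let $U_k$ be an $\mathcal N$-valued minimizer of $E_\star(\cdot; B_{\theta_k})$ with trace $\tilde u_k$; its existence follows from a preliminary Luckhaus extension between $\tilde u_k$ and the constant $p_0$. The small-energy regularity of minimizing $W$-harmonic maps, extended from \cite{hkl86,luckhaus88} to the anisotropic setting under \eqref{A1}, provides the required Campanato decay of $U_k$ at smaller scales. Define a global competitor $v_k$ on $B_{\theta_k}$ equal to $u_k$ on $\partial B_{\theta_k}$, equal to $U_k$ on an inner ball $B_{\theta_k(1-\lambda)}$, and interpolated on the thin shell via Luckhaus' lemma followed by the nearest-point projection onto $\mathcal N$, so that $f(v_k)\equiv 0$ pointwise. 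By minimality of $u_k$ on $B_{\theta_k}$,
\[
E_{\e_k}(u_k;B_{\theta_0}) \leq E_{\e_k}(u_k;B_{\theta_k}) \leq E_{\e_k}(v_k; B_{\theta_k}),
\]
and the right-hand side splits into the Campanato-decay bound on $U_k$ and a Luckhaus shell correction. Fixing $\theta_0$ small enough to absorb constants produces $E_{\e_k}(u_k;B_{\theta_0}) \leq \tfrac14\theta_0^{n-2}\eta_k^2$, contradicting the standing hypothesis.

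\textbf{Main obstacle.} The delicate point is the quantitative balance in the Luckhaus shell, which contributes terms of the form $\lambda\theta_k \int_{\partial B_{\theta_k}} W(\nabla u_k) + (\lambda\theta_k)^{-1}\|u_k - U_k\|_{L^2(\partial B_{\theta_k})}^2$. Without further input, both summands are only of order $\eta_k^2$, just matching rather than strictly beating the target $\theta_0^{n-2}\eta_k^2$. Closing the argument requires exploiting the refined $L^2$-closeness of $u_k$ and $U_k$ on the slice — both near $p_0$ by the strong $H^1$-convergence, quantified by Poincar\'e at scale $\theta_0$ — and optimizing the thickness $\lambda$. The absence of an energy monotonicity formula for $E_\star$ is a second structural difficulty: one cannot iterate decay from a single slice, so $\theta_0$ must be fixed once and for all small enough to absorb the constants produced by the interior regularity of the $\mathcal N$-valued limit map, rather than being refined by a self-improving iteration as in the isotropic case.
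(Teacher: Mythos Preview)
Your compactness--contradiction setup and the use of a Luckhaus-type comparison are in the right spirit, but the argument has a genuine gap precisely at the point you flag as the ``main obstacle,'' and the fixes you propose do not close it.

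The issue is that minimality on $B_{\theta_k}$ only gives you
\[
E_{\e_k}(u_k;B_{\theta_k}) \leq E_{\e_k}(v_k;B_{\theta_k}) = E_\star(U_k;B_{\theta_k(1-\lambda)}) + (\text{shell}),
\]
and the right-hand side is, at best, of order $\eta_k^2$: the shell is $O(\eta_k^2)$ as you observe, and $E_\star(U_k;B_{\theta_k(1-\lambda)}) \leq E_\star(U_k;B_{\theta_k})$ is also only $O(\eta_k^2)$ (by comparing $U_k$ with a constant plus a Luckhaus layer). No choice of $\lambda$ or appeal to Poincar\'e produces the missing factor $\theta_0^{n-2}$. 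The Campanato decay of $U_k$ controls $E_\star(U_k;B_{\theta_0})$, not $E_{\e_k}(u_k;B_{\theta_0})$, and minimality gives no direct comparison between these two: you have not shown that $u_k$ and $U_k$ are $H^1$-close at scale $\eta_k$ on the \emph{interior} ball. Placing the slice at scale $\theta_0$ rather than near $1$ only makes this worse, since the ratio $\theta_k/\theta_0$ is bounded and no decay can be harvested.

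The paper closes this gap by a linearization step that you are missing entirely. It places the good slice near scale $1$, builds the $\mathcal N$-valued minimizer $\overline w_\ell$ there, and then \emph{rescales in the target}: setting $v_\ell = \alpha_\ell^{-1}(\overline w_\ell - z_\ell)$ and $\tilde v_\ell = \alpha_\ell^{-1}(u_\ell - z_\ell)$ with $\alpha_\ell^2 = E_\star(\overline w_\ell;B_1) \sim \eta_\ell^2$, it shows that $v_\ell$ converges \emph{strongly} in $H^1$ to a minimizer of the \emph{linear} problem $\int W(\nabla v)$ in the tangent space $T_z\mathcal N$, whose decay comes from classical elliptic estimates. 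A bilinear-form identity, using that the limit satisfies the Euler--Lagrange equation for the linear problem, then yields $\int_{B_1} W(\nabla u_\ell - \nabla \overline w_\ell) + \e_\ell^{-2}\int_{B_1} f(u_\ell) = o(\alpha_\ell^2)$. This is what transfers the interior decay from $\overline w_\ell$ back to $u_\ell$, and it is the step your outline lacks.
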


The proof of Lemma~\ref{l:largescales} is modelled on the proof of the corresponding energy decay result for minimizing $\mathcal N$-valued maps in \cite{hkl86,luckhaus88}. There, it relies on the observation that a sequence of minimizers with arbitrarily small energy converges, after translating and rescaling its image, to a map minimizing energy under the \emph{linear} constraint $v\in T_z\mathcal N$ a.e., for some $z\in\mathcal N$. For such a map, classical elliptic regularity applies and this allows to conclude. Here we are faced with the additional difficulty that the potential term in the energy does not behave well with respect to such rescaling. To circumvent this difficulty we first need to be able to modify boundary values with the help of the following lemma, whose proof we postpone to the appendix.

\begin{lem}\label{l:modifboundary}
There exists $\delta_1=\delta_1(\mathcal N,f)>0$ such that for all $0< \e \leq \lambda<1$ and any $u\in H^1(\partial B_1;\R^k)$ with $E_\e(u;\partial B_1)\leq\delta_1^2\lambda^{n-1}$, there exist
\begin{align*}
& w\in H^1(\partial B_1;\mathcal N),\quad\varphi\in H^1(B_1\setminus B_{1-\lambda};\R^k),\\
& \text{with }\varphi = u\text{ on }\partial B_1,\quad \varphi= w((1-\lambda)\cdot)\text{ on }\partial B_{1-\lambda},
\end{align*}
satisfying the bounds
\begin{equation*}
E_\e(\varphi;B_1\setminus B_{1-\lambda})\lesssim \lambda E_\e(u;\partial B_1)\quad\text{and}\quad \int_{\partial B_1} \abs{\nabla w}^2 \lesssim E_\e (u;\partial B_1).
\end{equation*}
\end{lem}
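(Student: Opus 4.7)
The plan is a two-stage construction: first produce an $\mathcal{N}$-valued map $w\in H^1(\partial B_1;\mathcal{N})$ close to $u$ in an appropriate sense, then define $\varphi$ on the shell $B_1\setminus B_{1-\lambda}$ by radial linear interpolation between $u$ at $r=1$ and $w$ at $r=1-\lambda$. The explicit interpolation formula I would use is
\begin{equation*}
\varphi(r\omega) = \frac{r-(1-\lambda)}{\lambda}\,u(\omega) + \frac{1-r}{\lambda}\,w(\omega),\qquad \omega\in\partial B_1,\ 1-\lambda\leq r\leq 1,
\end{equation*}
which visibly satisfies $\varphi=u$ on $\partial B_1$ and $\varphi((1-\lambda)\omega)=w(\omega)$ on $\partial B_{1-\lambda}$.

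The heart of the argument is the construction of $w$. By \eqref{A2}, $f$ is bounded below by a positive constant outside a tubular neighborhood $U$ of $\mathcal{N}$ on which the nearest-point projection $\Pi\colon U\to\mathcal{N}$ is smooth. After a preliminary truncation (using the non-vanishing of $f$ at infinity) one may assume $u$ is bounded. The \enquote{bad set} $\Sigma:=\{\omega\in\partial B_1:u(\omega)\notin U\}$ then satisfies $|\Sigma|\lesssim \e^2 E_\e(u;\partial B_1)\leq \e^2\delta_1^2\lambda^{n-1}$ by Chebyshev. On $\partial B_1\setminus\Sigma$ I set $w=\Pi\circ u$, so that $|u-w|\lesssim \sqrt{f(u)}$ and $|\nabla w|\lesssim|\nabla u|$ pointwise. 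Extending $w$ across $\Sigma$ while keeping values in $\mathcal{N}$ is a Luckhaus-type problem: I decompose $\partial B_1$ into patches of size comparable to $\lambda$, and use a Fubini argument to locate, within each patch intersecting $\Sigma$, a slice on which $u$ lies pointwise in $U$. Since $|\Sigma|\ll\lambda^{n-1}$, the bad set occupies only a negligible fraction of each patch, and the extension can be performed patch by patch with image in $\mathcal{N}$. This yields $\int_{\partial B_1}|\nabla w|^2\lesssim E_\e(u;\partial B_1)$ and $\int_{\partial B_1}|u-w|^2\lesssim \int_{\partial B_1}f(u)+|\Sigma|\lesssim \e^2 E_\e(u;\partial B_1)$.

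With $w$ in hand, the energy estimates for $\varphi$ are routine. Computing in spherical coordinates gives $|\partial_r\varphi|=|u-w|/\lambda$ and a tangential gradient bounded by $|\nabla u|+|\nabla w|$; integration over the shell yields
\begin{equation*}
\int_{B_1\setminus B_{1-\lambda}}|\nabla\varphi|^2 \lesssim \lambda\int_{\partial B_1}\bigl(|\nabla u|^2+|\nabla w|^2\bigr) + \frac{1}{\lambda}\int_{\partial B_1}|u-w|^2.
\end{equation*}
For the potential: on the good set $\varphi$ remains in $U$ and $f(\varphi)\lesssim|u-w|^2\lesssim f(u)$, while on the bad set $f(\varphi)$ is uniformly bounded; hence $\e^{-2}\int f(\varphi)\lesssim \lambda\e^{-2}\bigl(\int f(u)+|\Sigma|\bigr)\lesssim\lambda E_\e(u;\partial B_1)$. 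The constraint $\e\leq\lambda$ forces $\e^2/\lambda\leq\lambda$, which lets us absorb the awkward $\lambda^{-1}\int|u-w|^2$ term into $\lambda E_\e$. Combining these pieces yields $E_\e(\varphi;B_1\setminus B_{1-\lambda})\lesssim \lambda E_\e(u;\partial B_1)$, as claimed.

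The main obstacle is thus the Luckhaus-type extension of $w$ across $\Sigma$ into $\mathcal{N}$: one must handle potentially nontrivial topology of $\mathcal{N}$ and the possibly complicated geometry of $\Sigma$. The key quantitative input is that the smallness of $|\Sigma|$ relative to the natural patch volume $\lambda^{n-1}$ forces, on almost every patch, the existence of a good slicing direction along which $u$ is pointwise close to $\mathcal{N}$, eliminating the topological obstruction locally; this is precisely what the hypothesis $E_\e(u;\partial B_1)\leq\delta_1^2\lambda^{n-1}$ provides.
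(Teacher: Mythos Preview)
Your strategy—project $u$ to an $\mathcal N$-valued $w$ and then interpolate linearly in the radial direction—is natural, but there is a real gap in the potential estimate that the ``preliminary truncation'' you invoke cannot fix.

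The issue is the sentence ``After a preliminary truncation \ldots\ one may assume $u$ is bounded.'' The lemma demands $\varphi=u$ on $\partial B_1$ with the \emph{original} $u$, so replacing $u$ by a truncation destroys the boundary condition; and interpolating first from $u$ to its truncation just moves the problem, since along that radial segment $f$ is again uncontrolled by $f(u)$ under~\eqref{A2}. Without a pointwise bound on $u$ two of your later claims break down. First, on $\Sigma$ the interpolant $\varphi=tu+(1-t)w$ can take values where $f$ is arbitrarily large: assumption~\eqref{A2} imposes nothing on $f$ along line segments, and one can place a smooth spike of $f$ between $w\in\mathcal N$ and a far-away value of $u$ so that $\int_0^1 f(tu+(1-t)w)\,dt\gg f(u)$. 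Hence the bound $\e^{-2}\int f(\varphi)\lesssim\lambda\e^{-2}\bigl(\int f(u)+|\Sigma|\bigr)$ is unjustified. Second, the estimate $\int_\Sigma|u-w|^2\lesssim|\Sigma|$ (needed for the gradient term) also requires $|u|$ bounded on $\Sigma$.

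The paper avoids this by never interpolating between $u$ and $w$ on a set where $u$ might be far from $\mathcal N$. It decomposes $\partial B_1$ into cells of size $\lambda$ and uses the energy bound together with a Fubini choice of the $1$-skeleton $Q_1$ to force $u$ to be \emph{pointwise} close to $\mathcal N$ on $Q_1$ (this is where $H^1$ on a one-dimensional set embeds into $C^0$, and where the hypothesis $E_\e(u;\partial B_1)\lesssim\delta_1^2\lambda^{n-1}$ is actually used). Linear interpolation between $u$ and $w=\pi_{\mathcal N}(u)$ is performed only over $Q_1$, where it is harmless. On the higher-dimensional cells $\hat e_i^j$ ($j\geq 2$), both $w$ and $\varphi$ are built by $0$-homogeneous extension, which gives $\int_{\hat e_i^j}f(\varphi)\lesssim\lambda\int_{\partial\hat e_i^j}f(\varphi)$ as a direct integral identity—no pointwise information on $f(\varphi)$ is needed, and this is precisely what makes the potential estimate go through for $f$ satisfying only~\eqref{A2}. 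Your global radial interpolation discards exactly this structure.
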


\begin{rem}
In the Landau-de Gennes setting ($k=5$, $\mathcal N\approx\mathbb{RP}^2$), results similar to Lemma~\ref{l:modifboundary} are proved in \cite{canevari17}. There the energy $E_\e(u)$ is allowed to be of order $o(|\log\e|)$, and $\lambda\sim\e^{1/2}\abs{\log\e}$. These results would enable us to perform the proof of Lemma~\ref{l:largescales} in the Landau-de Gennes case, but in our case we need only to consider small energies, which makes the proof of  Lemma~\ref{l:modifboundary} much simpler, and independent of the topology of $\mathcal N$.
\end{rem}

Granted this boundary modification lemma, we turn to the proof of Lemma~\ref{l:largescales}.

\begin{proof}[Proof of Lemma~\ref{l:largescales}]
Thanks to the ellipticity assumption \eqref{A1}, minimizers $v$ of $\int_{B_1}W(\nabla v)$ under the linear constraint $v\in T_z\mathcal N$ a.e. enjoy elliptic regularity estimates, uniformly in $z\in\mathcal N$ (see for example \cite[\S~10]{ADN2}). In particular, there exists $\theta_0\in (0,1/2)$ such that for any $z\in\mathcal N$ and any $v\in H^1(B_1;T_z\mathcal N)$ minimizing $\int_{B_1}W(\nabla \tilde v)$ among all maps $\tilde v\in H^1(B_1; T_z\mathcal N)$ with $\tilde v =v $ on $\partial B_1$, it holds
\begin{equation}\label{eq:regv}
\theta_0^{2-n}\int_{B_{\theta_0}}W(\nabla v) \leq \frac 14 \int_{B_{\frac 34}} W(\nabla v).
\end{equation}
In order to prove the validity of Lemma~\ref{l:largescales} for this value of $\theta_0$, we assume by contradiction that there exist sequences $\e_\ell\to 0$ and $u_\ell$ minimizing $E_{\e_\ell}(\cdot;B_1)$ such that
\begin{equation}\label{eq:assu_l}
E_{\e_\ell}(u_\ell;B_1)=:\delta_\ell^2\to 0,\qquad\text{and }\quad \theta_0^{n-2} E_{\e_\ell}(u_\ell;B_{\theta_0})>\frac 12 E_{\e_\ell}(u_\ell; B_1).
\end{equation}
By Fubini's theorem we may choose $\rho\in [3/4,1]$ such that $E_{\e_\ell}(u_\ell;\partial B_\rho)\lesssim \delta_\ell^2$. To simplify notations we assume in the sequel that $\rho=1$, i.e. we have
\begin{equation*}
E_{\e_\ell}(u_\ell;\partial B_1)\lesssim\delta_\ell^2.
\end{equation*}
The assumptions \eqref{A2} on $f$  imply that $\dist(u_\ell,\mathcal N)\to 0$ a.e., and we deduce that $u_\ell$ converges strongly in $H^1(B_1;\R^k)$ to a constant $z\in \mathcal N$. From Lemma~\ref{l:modifboundary} we obtain $\lambda_\ell\to 0$, $w_\ell\colon\partial B_1\to\mathcal N$, $\varphi_\ell\colon B_1\setminus B_{1-\lambda_\ell}\to\mathbb R^k$ such that
\begin{align}
\varphi_\ell = u_\ell\text{ on }\partial B_1,\quad\varphi_\ell = w_\ell((1-\lambda_\ell)\cdot)\text{ on }\partial B_{1-\lambda_\ell},\nonumber \\
E_{\e_\ell}(\varphi_\ell;B_1\setminus B_{1-\lambda_\ell})\lesssim \lambda_\ell \delta_\ell^2,\qquad \int_{\partial B_1}\abs{\nabla w_\ell}^2\lesssim\delta_\ell^2.\label{eq:estimw_l}
\end{align}
Note that, as argued in \cite[Corollary~34]{canevari17}, this implies that
\begin{equation}\label{eq:wclosetoubdry}
\int_{\partial B_1}\abs{w_\ell -u_\ell}^2 \lesssim \lambda_\ell \delta_\ell^2.
\end{equation}
Since $n\geq 3$, the set of maps $w\in H^1(B_1;\mathcal N)$ which agree with $w_\ell$ on $\partial B_1$ is not void (see e.g. \cite[Lemma~1.1]{hkl86}), and we can choose $\overline w_\ell\in H^1(B_1;\mathcal N)$ such that $\overline w_\ell = w_\ell$ on $\partial B_1$, and
\begin{align}
\alpha_\ell^2:=\int_{B_1} W(\nabla \overline w_\ell)=\min\left\lbrace \int_{B_1}W(\nabla w)\colon w\in H^1(B_1;\mathcal N),\,{w_{\lfloor \partial B_1}=w_\ell}\right\rbrace .\label{eq:wbar}
\end{align}
The map 
\begin{equation*}
\tilde w_\ell = \begin{cases}
\varphi_\ell & \text{ in }B_1\setminus B_{1-\lambda_\ell},\\
\overline w_\ell ((1-\lambda_\ell)\cdot)&\text{ in }B_{1-\lambda_\ell},
\end{cases}
\end{equation*}
agrees with $u_\ell$ on $\partial B_1$, an we infer, recalling \eqref{eq:estimw_l}, that
\begin{equation*}
\delta_\ell^2 =E_{\e_{\ell}}(u_\ell;B_1)\leq E_{\e_\ell}(\tilde w_\ell;B_1)\leq (1+o(1))\alpha_\ell^2 + o(1) \delta_\ell^2,
\end{equation*}
hence
\begin{equation}\label{eq:deltalessthanalpha}
\delta_\ell^2\leq (1+o(1))\alpha_\ell^2.
\end{equation}
On the other hand, the minimality property \eqref{eq:wbar} of $\overline w_\ell$ ensures, comparing its energy with the energy of the 0-homogeneous map $w_\ell(x/\abs{x})$, that
\begin{equation}\label{eq:alphalesssimdelta}
\alpha_\ell^2 \lesssim \int_{\partial B_1}\abs{\nabla w_\ell}^2 \lesssim \delta_\ell^2.
\end{equation}
This, together with \eqref{eq:wclosetoubdry} and the fact that $u_\ell$ converges to $z\in\mathcal N$, implies that $w_\ell \to z$ in $H^1(B_1;\mathcal \R^k)$.
Next we argue as in \cite{luckhaus88} and translate and rescale $\overline w_\ell$ in order to obtain a limiting map with values into $T_z\mathcal N$. Since $y_\ell:=\fint w_\ell$ converges to $z\in\mathcal N$, for large enough $\ell$ we may define $z_\ell:=\pi_{\mathcal N}(y_\ell)$, and Poincar\'e's inequality then ensures that
\begin{equation}\label{eq:wclosetoz}
\int_{B_1}\dist(\overline w_\ell -z_\ell, T_{z_\ell}\mathcal N) \lesssim \int_{B_1}\abs{\overline w_\ell -z_\ell}^2\lesssim \alpha_\ell^2.
\end{equation}
Hence the map
\begin{equation*}
v_\ell := \frac{1}{\alpha_\ell}(\overline w_\ell -z_\ell),
\end{equation*}
is bounded in $H^1(B_1;\R^k)$ and up to a subsequence (that we do not relabel) it converges weakly to a map $v\in H^1(B_1;\R^k)$ which, thanks to \eqref{eq:wclosetoz}, takes a.e. values into $T_z\mathcal N$. In particular $v_\ell$ converges strongly to $v$ in $L^2(\partial B_1;\mathbb R^k)$. Moreover $v_\ell$ is bounded in $H^1(\partial B_1;\mathbb R^k)$ thanks to \eqref{eq:estimw_l} and \eqref{eq:deltalessthanalpha}. Thus we may argue exactly as in \cite[Proposition~1]{luckhaus88} and construct good comparison maps to deduce that $v$ minimizes $\int_{B_1}W(\nabla \tilde v)$ among $T_z\mathcal N$-valued maps $\tilde v$ that agree with $v$ on $\partial B_1$, and that the convergence $v_\ell\to v$ is in fact strong in $H^1(B_1;\R^k)$. In particular, $v$ enjoys the energy decay property \eqref{eq:regv}. Thanks to the strong convergence $v_\ell\to v$ in $H^1$, after rescaling it holds
\begin{equation}\label{eq:energydecayw}
\theta_0^{2-n}\int_{B_{\theta_0}}W(\nabla \overline w_\ell)\leq \frac {1+o(1)}{4} \int_{B_1}W(\nabla\overline w_\ell).
\end{equation}

The last part of the proof consists in obtaining, from this energy decay for $\overline w_\ell$, similar energy decay for $u_\ell$, thus contradicting \eqref{eq:assu_l}. To that end we define
\begin{equation*}
\tilde v_\ell := \frac{1}{\alpha_\ell}(u_\ell -z_\ell).
\end{equation*}
Note that here we do not divide by $\delta_\ell$ but rather by the (possibly) larger $\alpha_\ell$ correspoinding to a minimizer among $\mathcal{N}$-valued maps of a closely related problem. The reason is that even though we already know that $\delta_\ell$ and $\alpha_\ell$ are of the same order, we need exact bounds in \eqref{eq:wclosetoustrong} { below} for the corresponding cancellations to take place.

Thanks to \eqref{eq:deltalessthanalpha} it holds $\int_{B_1}\abs{\nabla \tilde v_\ell}^2\lesssim 1$, and therefore $\tilde v_\ell -\fint\tilde v_\ell$ converges (up to a subsequence) weakly in $H^1(B_1;\R^k)$, and strongly in $L^2(\partial B_1;\R^k)$. 
From \eqref{eq:wclosetoubdry} and \eqref{eq:deltalessthanalpha} we infer that
\begin{equation*}
\int_{\partial B_1}\abs{\tilde v_\ell -v_\ell}^2 \lesssim \lambda_\ell \to 0.
\end{equation*}
In particular $\tilde v_\ell$ is bounded in $L^2(\partial B_1;R^k)$, so that $\fint \tilde v_\ell$ must be bounded. We may assume that it converges in $\R^k$, and hence deduce that 
\begin{equation*}
\tilde v_\ell \longrightarrow \tilde v\quad\text{weakly in }H^1(B_1;\R^k)\text{ and a.e.},
\end{equation*}
for some map $\tilde v\in H^1(B_1;\R^k)$ such that $\tilde v=v$ on $\partial B_1$. We claim that $\tilde v\in T_z\mathcal N$ a.e. This follows from the energy bound $E_\e(u_\ell;B_1)=\delta_\ell^2\lesssim\alpha_\ell^2$, which thanks to assumption \eqref{A2} on $f$ implies in particular that
\begin{equation*}
\frac{1}{\alpha_\ell}\dist(u_\ell,\mathcal N) \longrightarrow 0\text{ a.e.}
\end{equation*}
Fixing $x\in B_1$ at which this convergence and $\tilde v_\ell(x)\to\tilde v(x)$ hold, we have
\begin{align*}
\dist(u_\ell(x),\mathcal N) &=\dist(z_\ell + \alpha_\ell \tilde v(x) + \alpha_\ell (\tilde v_\ell(x) -\tilde v(x)),\mathcal N)\\
&\geq \dist(z_\ell + \alpha_\ell\tilde v(x),\mathcal N)-o(\alpha_\ell) \\
&\geq \alpha_\ell \abs{P_\ell\tilde v(x)} -O(\alpha_\ell^2)-o(\alpha_\ell),
\end{align*}
where $P_\ell$ denotes the orthogonal projection onto $(T_{z_\ell}\mathcal N)^\perp$. This shows that $\abs{P_\ell\tilde v(x)}\to 0$ and therefore $\tilde v(x)\in T_z\mathcal N$ since $z_\ell\to z$. 

We denote by $B(\cdot,\cdot)$ the symmetric bilinear form on $\R^{k\times n}$ corresponding to $W$, so that it holds
\begin{align}
\int_{B_1} W(\nabla \overline w_\ell -\nabla u_\ell) + \frac{1}{\e_\ell^2}\int_{B_1}f(u_\ell)&=\int_{B_1}W(\nabla \overline w_\ell) + E_\e(u_\ell;B_1) -2\int_{B_1}B(\nabla \overline w_\ell,\nabla u_\ell)\nonumber \\
&\leq \alpha_\ell^2+ (1+o(1))\alpha_\ell^2 -2\alpha_\ell^2 \int_{B_1} B(\nabla v_\ell,\nabla \tilde v_\ell).\label{eq:wclosetoustrong}
\end{align}
Because $v_\ell \to v$ strongly and $\tilde v_\ell\to\tilde v$ weakly we have
\begin{align*}
\int_{B_1}B(\nabla v_\ell,\nabla\tilde v_\ell)&\to \int_{B_1}B(\nabla v,\nabla\tilde v) = \int_{B_1}B(\nabla v,\nabla(\tilde v -v)) + \int_{B_1}W(\nabla v).
\end{align*}
Since $v$ is a minimizer of $\int_{B_1} W(\nabla v)$ among $T_z\mathcal N$-valued maps and $(\tilde v-v)$ is $T_z\mathcal N$-valued and vanishes on $\partial B_1$, it holds
\begin{equation*}
\int_{B_1}B(\nabla v,\nabla(\tilde v -v))=0.
\end{equation*}
Moreover, since $\int_{B_1}W(\nabla v_\ell)=1$ and $v_\ell$ converges strongly to $v$ it holds $\int_{B_1}W(\nabla v)=1$, and therefore 
\begin{equation*}
\int_{B_1}B(\nabla v_\ell,\nabla\tilde v_\ell)=1+o(1).
\end{equation*}
Plugging this into \eqref{eq:wclosetoustrong} we find
\begin{equation*}
\int_{B_1}W(\nabla \overline w_\ell -\nabla u_\ell)+\frac{1}{\e_\ell^2}\int_{B_1}f(u_\ell) \leq o(1)\,\alpha_\ell^2.
\end{equation*}
Using this estimate and \eqref{eq:energydecayw} we obtain
\begin{align*}
\theta_0^{2-n}E_{\e_\ell}(u_\ell;B_{\theta_0})&\leq \frac 87 \theta_0^{2-n}\int_{B_{\theta_0}}W(\nabla \overline w_\ell) + o(1)\alpha_\ell^2 \\
&\leq (1+o(1))\frac 27 \int_{B_1}W(\nabla \overline w_\ell) + o(1)\alpha_\ell^2 \\
& \leq \frac{1+o(1)}{3}\int_{B_1}W(\nabla u_\ell) +o(1)\alpha_\ell^2.
\end{align*}
Since $\alpha_\ell^2\lesssim\delta_\ell^2 =E_{\e_\ell}(u_\ell;B_1)$, this contradicts \eqref{eq:assu_l} and concludes the proof of Lemma~\ref{l:largescales}.
\end{proof}

\subsection{Regularity at small scales}

\begin{lem}\label{l:smallscales}
Assume \eqref{A1}-\eqref{A2}, and either \eqref{A3a} or \eqref{A3b}.
For all $\e>0$ there exists $\delta,\alpha,C>0$ (depending on $n$, $\lambda$, $\Lambda$, $f$ and $M$, but also on $\e$) such that any minimizer $u_\e$ of $E_{\e}(\cdot;B_2)$  satisfies
\begin{equation*}
E_{\e}(u_\e;B_2) \leq \delta^2\quad\Longrightarrow \quad   r^{2-n}\int_{B_r}\abs{\nabla u_\e}^2\leq C r^{\alpha} E_{\e}(u_\e;B_2)\quad\forall r\in (0,1).
\end{equation*}
\end{lem}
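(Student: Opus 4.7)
Since $\e>0$ is fixed, the lemma is a regularity statement for weak solutions of the semilinear elliptic system
\[
-\partial_j\!\bigl(a_{ij}^{\alpha\beta}(x)\partial_i u_\e^\beta\bigr)=-\e^{-2}\partial_\alpha f(u_\e),
\]
and every constant in sight may depend on $\e$. My plan is to obtain a Morrey-type decay of $\int_{B_r}|\nabla u_\e|^2$ by a Campanato-type comparison of $u_\e$ with the $W$-harmonic extension of its own boundary values, while carefully keeping a factor $E_\e(u_\e;B_2)$ in the inhomogeneity so that the final bound is linear in the energy.

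The scheme has three ingredients. \emph{(i) $L^\infty_{\mathrm{loc}}$ bound on $u_\e$.} Under \eqref{A3a} this is assumed; under \eqref{A3b} (where $n=3$) I bootstrap from $H^1$: Sobolev embedding gives $u_\e\in L^6_{\mathrm{loc}}$, and the growth $|\nabla f(z)|\lesssim |z|^{6/p}$ with $p>3/2$ then places $\e^{-2}\nabla f(u_\e)\in L^p_{\mathrm{loc}}$ with $p>n/2$; interior $W^{2,p}$ estimates for the uniformly elliptic system \eqref{A1} yield $u_\e\in W^{2,p}_{\mathrm{loc}}\hookrightarrow L^\infty_{\mathrm{loc}}$, and one further bootstrap promotes this to $u_\e\in C^{1,\alpha_0}_{\mathrm{loc}}$. \emph{(ii) Energy-controlled bound on $\nabla f(u_\e)$.} The quadratic vanishing of $f$ on $\mathcal N$ from \eqref{A2} gives $|\nabla f(z)|^2\lesssim f(z)$ near $\mathcal N$; the $L^\infty$ bound controls the set where $u_\e$ is far from $\mathcal N$ (where $f$ is bounded below), and under \eqref{A3b} the auxiliary growth $|\nabla f|\lesssim f^a$ plays the analogous role at infinity. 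Combining,
\[
\int_{B_r}|\nabla f(u_\e)|^2 \leq C(\e)\int_{B_r}f(u_\e) \leq C(\e)\,\e^2\,E_\e(u_\e;B_2).
\]
\emph{(iii) Harmonic comparison and iteration.} For $B_r(x_0)\subset B_1$ I write $u_\e=v+w$, where $v$ minimizes $\int W(\nabla v)$ with boundary values $u_\e|_{\partial B_r(x_0)}$ and $w\in H^1_0(B_r(x_0))$ solves the linear elliptic system with right-hand side $-\e^{-2}\nabla f(u_\e)$. Standard Campanato regularity for the linear system gives $\int_{B_\rho}|\nabla v|^2\leq C(\rho/r)^n\int_{B_r}|\nabla v|^2$; the $H^1_0$ energy identity combined with Poincar\'e and (ii) gives $\int_{B_r}|\nabla w|^2\leq C(\e)\,r^2\,E_\e(u_\e;B_2)$. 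The resulting inequality
\[
\int_{B_\rho}|\nabla u_\e|^2 \leq C\Bigl(\tfrac{\rho}{r}\Bigr)^n\!\!\int_{B_r}|\nabla u_\e|^2 + C(\e)\,r^2\,E_\e(u_\e;B_2)
\]
is fed into Giaquinta's iteration lemma (with exponents $a=n$ and $b=2$, valid since $n\geq 3$) to produce $\int_{B_\rho}|\nabla u_\e|^2\leq C(\e)\,\rho^{n-2+\alpha}\,E_\e(u_\e;B_2)$ for some $\alpha>0$, which is exactly the claim.

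The hard part will be ingredient (i) under \eqref{A3b}: the exponent $6/p$ with $p>3/2$ in the growth of $\nabla f$ is tailored precisely so that Sobolev embedding in dimension three followed by a single $W^{2,p}$ step crosses the scaling threshold $p=n/2$, and the auxiliary assumption $|\nabla f|\lesssim f^a$ is what turns an integrated bound on $|\nabla f(u_\e)|$ into a bound by the potential energy in the absence of a global pointwise bound on $u_\e$. Once the $L^\infty$ bootstrap is carried out, the comparison-and-iteration step is classical Campanato machinery and poses no further conceptual difficulty.
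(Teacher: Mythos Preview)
Your harmonic-comparison scheme has a genuine dimensional gap. Giaquinta's iteration applied to
\[
\phi(\rho)\leq C\Bigl(\tfrac{\rho}{r}\Bigr)^{n}\phi(r)+C(\e)\,r^{2}\,E_\e(u_\e;B_2)
\]
with $a=n$ and $b=2$ yields $\phi(\rho)\leq C(\e)\,\rho^{2}\,E_\e(u_\e;B_2)$, not $\rho^{n-2+\alpha}$: the output exponent of the lemma is $b$, not anything involving $a$. Hence $r^{2-n}\phi(r)\lesssim r^{4-n}E$, which gives $\alpha=4-n$; this is positive only for $n=3$. Since \eqref{A3a} places no restriction on $n$, your argument does not prove the lemma for $n\geq 4$. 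The paper instead bootstraps $W^{2,p}$ estimates directly: from $|\nabla f(u)|\lesssim f(u)^{1/2}$ one gets $\|\nabla^2 u\|_{L^{2}}\lesssim E^{1/2}$, then Sobolev improves $\|\nabla u\|_{L^{p_k}}$ and $\|\nabla f(u)\|_{L^{p_k}}$ step by step, always keeping the factor $E^{1/2}$, until $p_k>n$ and one obtains $\|\nabla u\|_{L^\infty(B_{1/2})}\lesssim E^{1/2}$ (so $\alpha=2$) in every dimension.

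There is also a looseness in your step (ii) under \eqref{A3b}. The inequality $|\nabla f|\lesssim f^{a}$ with $a\in[\tfrac12,\tfrac45]$ gives $|\nabla f|^2\lesssim f^{2a}$, and since $2a\geq 1$ this does \emph{not} imply $|\nabla f(z)|^2\leq C f(z)$ with $C$ independent of $|z|$; so your displayed bound $\int|\nabla f(u_\e)|^2\leq C(\e)\int f(u_\e)$ really requires a quantitative $L^\infty$ bound on $u_\e$, with constant depending only on $\e$ and the data. Your bootstrap in (i) produces $\|u_\e\|_{L^\infty_{\mathrm{loc}}}\leq C(\|u_\e\|_{L^6},\e)$, but $\|u_\e\|_{L^6}$ is not directly controlled by $E_\e(u_\e;B_2)$: the energy bounds $\|\nabla u_\e\|_{L^2}$ and $\int f(u_\e)$, not $\|u_\e\|_{L^2}$. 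One can close this by using the potential term (the set $\{|u_\e|>R_0\}$ has small measure, forcing $|\fint u_\e|\lesssim 1$ for $\delta$ small), but this step is missing from your outline. The paper sidesteps the issue entirely: it uses $|\nabla f|\lesssim f^{1/2}+f^{A}$ with $A=\tfrac43-\tfrac1p$ inside the $L^p$ elliptic estimate, combines Sobolev $W^{1,1}\hookrightarrow L^{Ap}$ with H\"older and Young to absorb a term $\bigl(\int|\nabla u|^{p_*}\bigr)^{2/p_*}$ on the right into the left via a hole-filling iteration over nested balls, and arrives at $\|\nabla u\|_{L^{p_*}(B_1)}^2\lesssim E$ with $p_*>3$ without ever establishing an $L^\infty$ bound. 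That is precisely where the exponent window in \eqref{A3b} is used; in your scheme the hypothesis $|\nabla f|\lesssim f^a$ never actually enters.
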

\begin{rem} 
We prove in fact Lemma~\ref{l:smallscales} with $\delta=1$. We state it in this way to emphasize that this statement  is strong  enough to prove Theorem~\ref{t:int}. We expect Lemma~\ref{l:smallscales} to hold under much less restrictive assumptions: in any dimension $n\geq 3$, for an elastic energy $W(x,u,\nabla u)$ which may be a positive definite quadratic form in $\nabla u$ with coefficients depending smoothly on $x$ and $u$, and for a potential $f$ with some radial growth at infinity. 
\end{rem}

We will prove Lemma~\ref{l:smallscales} separately under the assumptions \eqref{A3a} and \eqref{A3b}. In both cases we use elliptic estimates for the equation
\begin{equation*}
\mathcal L u=\frac{1}{\e^2}\nabla f(u)
\end{equation*}
where $\mathcal L$ is the second order elliptic operator such that $\int W(\nabla\varphi)=\int \mathcal L\varphi\cdot\varphi$ for all test functions $\varphi$. This operator satisfies elliptic estimates
\begin{equation*}
\norm{\nabla^2 v}_{L^p}\lesssim \norm{\mathcal L v}_{L^p}+\norm{v}_{L^p},
\end{equation*}
for $1<p<\infty$ and all maps $v$ with compact support in the unit ball $B_1$. The inequality is up to a constant depending on $\lambda$, $\Lambda$, $n$ and $p$ (see e.g. \cite[\S~10]{ADN2} or \cite[\S~6.4]{morrey}). For any $0<R_1<R_2\leq 1$, one may apply this to $v=(u-\xi)\varphi$ for any $\xi\in\R^k$ and $\varphi$ a cut-off function such that $\varphi\equiv 1$ in $B_{R_1}$, $\varphi\equiv 0$ outside of $B_{R_2}$, and $\abs{\nabla^\ell\varphi}\lesssim (R_2-R_1)^{-\ell}$, and conclude with Poincar\'e's inequality that
\begin{equation}\label{eq:ellipt_estim}
\norm{\nabla^2 u}_{L^p(B_{R_1})}\lesssim \norm{\mathcal L u}_{L^p(B_{R_2})}+\frac{1}{(R_2-R_1)^2}\norm{\nabla u}_{L^p(B_{R_2})}.
\end{equation}
Under the uniform $L^\infty$ bound assumption \eqref{A3a}, Lemma~\ref{l:smallscales} will follow from using \eqref{A2} to bound $\mathcal L u_\e$ in terms of the energy $E_\e$, and bootstrapping the elliptic estimate \eqref{eq:ellipt_estim}. Without  the uniform $L^\infty$ bound however, both estimating $\mathcal L u_\e$ in terms of $E_\e$, and bootstrapping, do not work directly and this is why we need assumption \eqref{A3b}. 

\begin{proof}[Proof of Lemma~\ref{l:smallscales} under \eqref{A3a}]
 In this proof we drop the subscripts $\e$ to simplify notation. We fix a sequence of radii $R_k\in (1/2,1]$ by letting $R_0=1$ and $R_{k}=R_{k-1}-2^{-(k+1)}$ for $k\geq 1$.

The map $u$ solves
\begin{equation*}
\mathcal L u = \frac{1}{\e^2}\nabla f( u).
\end{equation*}
We start by applying \eqref{eq:ellipt_estim} with $p=p_0=2$, hence
\begin{equation*}
\norm{\nabla^2u}_{L^2(B_{R_1})}\lesssim  (1+\e^{-2}) \norm{\nabla f(u)}_{L^2(B_{R_0})} + \norm{\nabla u}_{L^2(B_{R_0})}.
\end{equation*}
Thanks to \eqref{A2}  and \eqref{A3a} we have $\abs{\nabla f(u)}\lesssim f(u)^{1/2}$, so that we deduce that
\begin{equation*}
\norm{\nabla^2u}_{L^2(B_{R_1})}\lesssim   \e^{-2} E(u;B_1)^{\frac 12}.
\end{equation*}
Next we set $p_1=2^*=2n/(n-2)$. By Sobolev embedding and the above we obtain
\begin{align*}
\norm{\nabla u}_{L^{p_1}(B_{R_1})}&\lesssim \norm{\nabla^2 u}_{L^2(B_{R_1})}+\norm{\nabla u}_{L^2(B_{R_1})}\lesssim   (1+\e^{-2})E(u;B_1)^{\frac 12},\\
\norm{\nabla f(u)}_{L^{p_1}(B_{R_1})}&\lesssim \norm{\nabla^2f(u)}_{L^\infty}\norm{\nabla u}_{L^2(B_{R_1})}+\norm{\nabla f(u)}_{L^2(B_{R_1})}\lesssim  (1+\e^{-2}) E(u;B_1)^{\frac 12}.
\end{align*}
Applying \eqref{eq:ellipt_estim} again we have therefore
\begin{equation*}
\norm{\nabla^2 u}_{L^{p_1}(B_{R_2})}\lesssim   (1+\e^{-4})E(u;B_1)^{\frac 12}.
\end{equation*}
If $p_1<n$, we iterate the above, and obtain a sequence of exponents $p_k<n$ such that $p_{k+1}=p_k^* = p_{k}n/(n-p_k)$, and
\begin{align*}
\norm{\nabla u}_{L^{p_k}(B_{R_k})}+ 
\norm{\nabla f(u)}_{L^{p_k}(B_{R_k})} &\lesssim  (1+\e^{-2k})E(u;B_1)^{\frac 12},\\
\norm{\nabla^2 u}_{L^{p_{k+1}}(B_{R_{k+1}})} &\lesssim   (1+\e^{-2k-2}) E(u;B_1)^{\frac 12}.
\end{align*}
The sequence $(p_k)$ is strictly increasing and  $p_{k+1}-p_k=p_k(n/(n-p_k)-1)>p_0(n/(n-p_0)-1)$, so that after a finite number of iterations we have $p_{k+1}\geq n$. If $p_{k+1}>n$, by Sobolev embedding this implies $\norm{\nabla u}_{L^\infty(B_{R_{k+1}})}\lesssim E(u;B_1)^{1/2}$. If $p_{k+1}=n$ we may replace it by $\tilde p_{k+1}<n$ but arbitrarily close to $n$ and iterate one more time. In any case we infer
\begin{equation*}
\norm{\nabla u}_{L^\infty(B_{1/2})}\lesssim   (1+\e^{-2\kappa}) E(u;B_1)^{1/2},
\end{equation*}
for some $\kappa=\kappa(n)\in\mathbb N$,
and this implies the conclusion of Lemma~\ref{l:smallscales} with $\alpha=2$.
\end{proof}

\begin{proof}[Proof of Lemma~\ref{l:smallscales} under \eqref{A3b}]   In this proof we drop the subscripts $\e$ to simplify notation.

For the convenience of the reader we recall here  assumption \eqref{A3b}:
\begin{gather*}
n=3,\text{ and there exist }p>\frac 32\text{ and }\frac 12 \leq a \leq \min\left(\frac 45,\frac 43 -\frac 1p\right),\\
\text{ such that }\abs{\nabla f(z)}\lesssim \abs{z}^{\frac 6p}\text{ and }\abs{\nabla f(z)}\lesssim f(z)^a\text{ as }\abs{z}\to\infty.
\end{gather*}
There is no loss of generality in assuming that 
\begin{equation}\label{eq:p_wlog}
\frac 32 < p\leq \frac {15} 8,
\end{equation}
since for $p\geq 15/8$ we have $4/3-1/p\geq 4/5$.

We let $\delta=1$, and consider a map $u$ minimizing $E(\cdot;B_2)$ and satisfying $E(u;B_2)\leq 1$. The map $u$ solves
\begin{equation*}
\mathcal L  u = \frac{1}{\e^2}\nabla f(u).
\end{equation*}
Since, by Sobolev embedding $u\in L^6(B_2)$, we deduce from the first growth assumption $\abs{\nabla f(z)}\lesssim \abs{z}^{6/p}$ in \eqref{A3b} that $\nabla f(u)\in L^p(B_2)$. Hence applying the elliptic estimates \eqref{eq:ellipt_estim} yields that $\nabla^2 u\in L^p_{loc}(B_2)$. Again by Sobolev embedding, we deduce that $\nabla u\in L^{p_*}_{loc}(B_2)$, where $p_*=3p/(3-p)$. Note that the condition $p>3/2$ implies that $p_*>3$. This is enough to deduce that
\begin{equation*}
\frac 1r \int_{B_r}\abs{\nabla u}^2\lesssim r^\alpha\quad\text{ for some }\alpha\in (0,1).
\end{equation*}
However  Lemma~\ref{l:smallscales} claims a bound in terms of the energy $E(u;B_2)$, which is not provided by the above argument. 
This is why we need the second growth assumption in \eqref{A3b}, namely $\abs{\nabla f}\lesssim f^a$.

Note that since $f\geq 0$, and since the nondegeneracy assumption \eqref{A2} implies $\abs{\nabla f(z)}\lesssim f(z)^{1/2}$ for $\abs{z}\lesssim 1$, using \eqref{A3b} we deduce
\begin{equation*}
\abs{\nabla f(z)}\lesssim f(z)^{1/2} + f(z)^a\qquad\forall z\in\R^k.
\end{equation*}
{ Because $1/2\leq a\leq 4/3-1/p$, this implies
\begin{equation*}
\abs{\nabla f(z)}\lesssim f(z)^{1/2} + f(z)^A\qquad\forall z\in\R^k,\; A:=\frac 43 - \frac 1p.
\end{equation*}}
Applying \eqref{eq:ellipt_estim} we obtain, for any $1\leq R_1<R_2\leq 2$,
\begin{equation*}
\int_{B_{R_1}}\abs{\nabla^2 u}^p\lesssim \frac{1}{(R_2-R_1)^{2p}}\int_{B_{R_2}}\abs{\nabla  u}^p + \e^{-2p}\int_{B_{R_2}}\abs{\nabla f( u)}^p.
\end{equation*}
On the other hand, by Sobolev embedding $W^{1,p}\subset L^{p_*}$, it holds
\begin{equation*}
\left(\int_{B_{R_1}}\abs{\nabla u}^{p_*}\, dx\right)^{\frac{2}{p_*}}\lesssim  \left(\int_{B_{R_1}}\abs{\nabla u}^p\,dx\right)^{\frac 2p} +\left(\int_{B_{R_1}}\abs{\nabla^2 u}^p\, dx\right)^{\frac 2p}.
\end{equation*}
Gathering the above, we have
\begin{align*}
\left(\int_{B_{R_1}}\abs{\nabla  u}^{p_*}\, dx\right)^{\frac 2{p_*}}&\lesssim 
\frac{1}{(R_2-R_1)^4}\left(\int_{B_{R_2}}\abs{\nabla  u}^p\, dx\right)^{\frac 2p}
+ \e^{-4}\left(\int_{B_{R_2}}\abs{\nabla f( u)}^p\, dx\right)^{\frac 2p}  \\
&\lesssim \frac{1}{(R_2-R_1)^4}\left(\int_{B_{R_2}}\abs{\nabla  u}^p\, dx\right)^{\frac 2p}\\
&\quad
+\e^{-4}\left(\int_{B_{R_2}}f( u)^{p/2}\,dx\right)^{\frac 2p} 
+\e^{-4} \left(\int_{B_{R_2}}f( u)^{{ A} p}\, dx\right)^{\frac 2p}, 
\end{align*}
and by Jensen's inequality, since $2/p\geq 1$,
\begin{align}
\left(\int_{B_{R_1}}\abs{\nabla  u}^{p_*}\, dx\right)^{\frac 2{p_*}}&\lesssim
\frac{1}{(R_2-R_1)^4}\int_{B_{R_2}}\abs{\nabla  u}^2\, dx
+\e^{-4}\int_{B_{R_2}}f( u)\,dx \\
&\quad
+\e^{-4} \left(\int_{B_{R_2}}f( u)^{{ A} p}\, dx\right)^{\frac 2p}\nonumber\\
&\lesssim \frac{1+\e^{-2}}{(R_2-R_1)^4}E(u;B_{R_2})
+ \e^{-4}\left(\int_{B_{R_2}}f( u)^{{ A} p}\, dx\right)^{\frac 2p}\label{eq:estimgradup*}
\end{align}
{ Then} we use the fact that 
\begin{equation*}
{ Ap=\frac 43 p - 1\in \left[1,\frac 32\right],}
\end{equation*}
so that by Sobolev embedding $W^{1,1}\subset L^{{A} p}$.  Hence we find
\begin{align*}
\left(\int_{B_{R_2}}f( u)^{{A} p}\, dx \right)^{\frac 2p}&\lesssim \left(\int_{B_{R_2}}f(u)\, dx\right)^{2{A}}+\left(\int_{B_{R_2}}\abs{\nabla  u}\abs{\nabla f( u)}\, dx \right)^{2{A}}\\
&\lesssim \e^{4A} E(u;B_{R_2})^{2{A}}\\
&\quad +\left(\int_{B_{R_2}}\abs{\nabla  u}f(u)^{1/2}\, dx \right)^{2{A}}
+\left(\int_{B_{R_2}}\abs{\nabla  u}f(u)^{{A}}\, dx \right)^{2{A}}.
\end{align*}
The second term in the above right-hand side is $\lesssim \e^{2A}E(u;B_{R_2})^{2{A}}$. To estimate the third term we apply H\"{o}lder's inequality to see that
\begin{align*}
\int_{B_{R_2}}\abs{\nabla  u}f(u)^{{A}}\, dx 
\lesssim \left( \int_{B_{R_2}}\abs{\nabla u}^{p_*}\, dx\right)^{\frac{1}{p_*}}\left(\int_{B_{R_2}}f(u)^{{A} p_*/(p_*-1)}\right)^{\frac{p_*-1}{p_*}}.
\end{align*}
Finally, since $E(u;B_{R_2})\leq 1$ (and $2{A}\geq 1$), we conclude
\begin{align*}
\left(\int_{B_{R_2}}f( u)^{{A} p}\, dx \right)^{\frac 2p}
&\lesssim (\e^{2A}+\e^{4A})E(u;B_{R_2}) \\
&\quad + \left( \int_{B_{R_2}}\abs{\nabla u}^{p_*}\, dx\right)^{\frac{2{A}}{p_*}}\left(\int_{B_{R_2}}f(u)^{{A} p_*/(p_*-1)}\right)^{2{A}\frac{p_*-1}{p_*}}.
\end{align*}
Recall that
\begin{align*}
A= \frac 43 -\frac 1p = \frac{p_* -1}{p_*},
\end{align*}
so that the above implies
\begin{align*}
\left(\int_{B_{R_2}}f( u)^{{A} p}\, dx \right)^{\frac 2p}
&\lesssim (\e^{2A}+\e^{4A})E(u;B_{R_2}) \\
&\quad +\e^{4A^2} \left( \int_{B_{R_2}}\abs{\nabla u}^{p_*}\, dx\right)^{\frac{2{A}}{p_*}}E(u;B_{R_2})^{2{A}^2}.
\end{align*}
Since ${A}< 1$ we may invoke Young's inequality $xy\lesssim x^{1/{A}} + y^{1/(1-{A})}$ for all $x,y\geq 0$, and deduce
\begin{align*}
\left(\int_{B_{R_2}}f( u)^{ A p}\, dx \right)^{\frac 2p}
& \lesssim (\e^{2A}+\e^{4A}) E(u;B_{R_2}) \\
&\quad + \eta^{1/{A}}\left( \int_{B_{R_2}}\abs{\nabla u}^{p_*}\, dx\right)^{\frac{2}{p_*}}+\frac{\e^{\frac {4A^2}{1-1}}}{\eta^{1/(1-{A})}}E(u;B_{R_2})^{\frac{2{A}^2}{1-{A}}},
\end{align*}
for any $\eta>0$. Since $2{A}^2/(1-{A})\geq 1$ (because ${A}\geq 1/2$) and $E(u;B_{R_2})\leq 1$ this implies
\begin{equation*}
\left(\int_{B_{R_2}}f( u)^{{A} p}\, dx \right)^{\frac 2p}
 \lesssim \frac{\e^{2A}+\e^{\frac{4A^2}{1-A}}}{\eta^{1/(1-{A})}} E(u;B_{R_2}) + \eta^{1/{A}}\left( \int_{B_{R_2}}\abs{\nabla u}^{p_*}\, dx\right)^{\frac{2}{p_*}},
\end{equation*}
and plugging this into \eqref{eq:estimgradup*} we get
\begin{align*}
\left(\int_{B_{R_1}}\abs{\nabla  u}^{p_*}\, dx\right)^{\frac 2{p_*}}
& \lesssim \left(\frac{1+\e^{-2}}{(R_2-R_1)^{4}}+\frac{\e^{2A-4}+ \e^{4\frac{A^2+A-1}{1-A}}}{\eta^{1/(1-{A})}}\right)E(u;B_{R_2}) \\
&\quad + \eta^{1/{A}}\left(\int_{B_{R_2}}\abs{\nabla u}^{p_*}\, dx\right)^{\frac 2{p_*}}.
\end{align*}
Choosing $\eta$ small enough, we infer
\begin{equation*}
\left(\int_{B_{R_1}}\abs{\nabla  u}^{p_*}\, dx\right)^{\frac 2{p_*}}
\leq\frac 12 \left(\int_{B_{R_2}}\abs{\nabla  u}^{p_*}\, dx\right)^{\frac 2{p_*}} + \frac{\e^{2A-4}+ \e^{4\frac{A^2+A-1}{1-A}}}{(R_2-R_1)^{ 4}}E(u;B_{R_2}) ,
\end{equation*}
for some constant $C>0$. Setting $\rho_j=3/2-1/(2K^{j})$ for some $K\in (1, 2^{1/ 4})$ and iterating the above estimate applied to $R_1=\rho_j$ and $R_2=\rho_{j+1}$ we obtain
\begin{align*}
\left(\int_{B_{\rho_0}}\abs{\nabla u}^{p_*}\, dx\right)^{\frac 2{p_*}}
&\leq \frac 1{2^j} \left(\int_{B_{\rho_j}}\abs{\nabla u}^{p_*}\, dx\right)^{\frac 2{p_*}} \\
&\quad + C \: (\e^{2A-4}+ \e^{4\frac{A^2+A-1}{1-A}}) \left( \frac{K-1}{2K}\right)^{2p} \left(\sum_{\ell=0}^{j-1}\left(\frac{K^{2p}}{2}\right)^\ell \right)E( u;B_{\rho_j})\\
&\leq \frac 1{2^j}\left(\int_{B_{3/2}}\abs{\nabla u}^{p_*}\, dx\right)^{\frac 2{p_*}}+ C'\: (\e^{2A-4}+ \e^{4\frac{A^2+A-1}{1-A}}) E(u;B_{3/2}).
\end{align*}
Letting $j\to\infty$ and recalling that we already know that $\nabla u\in L^{p_*}(B_{3/2})$, we deduce that
\begin{equation*}
\left(\int_{B_1}\abs{\nabla u}^{p_*}\, dx\right)^{\frac 2{p_*}}\lesssim (\e^{2A-4}+ \e^{4\frac{A^2+A-1}{1-A}}) E( u;B_{3/2}).
\end{equation*}
It is directly checked that $p_*>3$, Hence H\"older's inequality
 implies
\begin{align*}
r^{-1}\int_{B_r}\abs{\nabla u}^2 &\leq r^{2(1-3/p_*)}\left(\int_{B_r}\abs{\nabla u}^{p_*}\right)^{\frac 2{p_*}}\\
&\lesssim (\e^{2A-4}+ \e^{4\frac{A^2+A-1}{1-A}}) \: r^{2(1-3/p_*)} E(u; B_2), 
\end{align*}
for all $r\in (0,1)$.
\end{proof}

\subsection{Interior regularity}\label{ss:proofinterior}

We are ready to prove our main result.

\begin{proof}[Proof of Theorem~\ref{t:int}]
In this proof we are going to rescale repeatedly in the $x$ variable, and should accordingly define a new quadratic form $W$ at each step, unless $W$ has constant coefficients. The quadratic form $W$ with rescaled coefficients will still satisfy \eqref{A1}, hence  it will not affect the implicit constants in the conclusions of Lemmas~\ref{l:largescales} and \ref{l:smallscales}, which are uniform with respect to quadratic forms satisfying \eqref{A1}.
 Therefore we will, for the sake of clarity, assume that $W$ has constant coefficients: this does not change the strategy of the proof, but it does simplify a lot the notations  (since it avoids redefining $W$ at each step). 

Let $u_\e$ minimize $E_\e(\cdot;B_{2r_0})$ and satisfy
\begin{equation*}
(2r_0)^{2-n}E_\e(u_\e; B_{2r_0})\leq \delta^2,
\end{equation*}
for some $\delta\in (0,1]$ to be fixed later. Let $\theta_0$ and $\delta_0$ be as in Lemma~\ref{l:largescales}.

Fixing $x_0\in B_{r_0}$ and setting $\bar u(\bar x)=u_{\e}(x_0+r_0 \bar x)$ we have that $\bar u$ minimizes $E_{\bar \e}(\cdot; B_1)$ for $\bar \e =\e/r_0<\e_0$, and
\begin{equation*}
E_{\bar \e}(\bar u; B_1)\leq \delta^2.
\end{equation*}
Hence we are in a situation to apply Lemma~\ref{l:largescales} which implies that $\tilde u(\tilde x) = \bar u({\theta_0}\tilde x)$ satisfies, with $\tilde\e=\theta_0^{-1}\bar\e$,
\begin{align*}
 E_{\tilde\e}(\tilde u;B_1)&=(\theta_0)^{2-n}E_{\bar\e}(\bar u; B_{{\theta_0}})\leq \frac 12  E_{\bar\e}(\bar u;B_1)\leq \delta_0^2.
\end{align*}
By induction we may in fact apply Lemma~\ref{l:largescales} to $\tilde u(\tilde x)=\bar u (\theta_0^{j+1}\tilde x)$ and $\tilde\e =\theta_0^{-j-1}\bar \e$ for all $j\in\mathbb N$ such that $\theta_0^j >\bar\e/\e_0$ and infer
\begin{equation*}
(\theta_0^{j+1})^{2-n} E_{\bar\e}(\bar u; B_{(\theta_0)^{j+1}})\leq \left(\frac 12\right)^{j+1}  E_{\bar\e}(\bar u;B_1).
\end{equation*}
This implies
\begin{equation}\label{eq:largescaledecay}
\bar r^{2-n}E_{\bar\e}(\bar u;B_{\bar r}) \lesssim \bar r^{\alpha_0}  E_{\bar\e}(\bar u;B_1)\qquad\forall \bar r\in [\bar\e/\e_0,1),
\end{equation}
where $\alpha_0=\ln 2/\ln((\theta_0)^{-1})>0$. 

Next we set 

\begin{equation}\label{ubar} 
r_1=\bar\e/\e_0\mbox{ and }\hat u (\hat x)=\bar u(r_1 \hat x),
\end{equation}
so that $\hat u$ minimizes $E_{\hat\e}(\cdot;B_2)$ for $\hat\e=\bar\e/r_1=\e_0$,
and
\begin{equation*}
E_{\hat\e}(\hat u ; B_2)=r_1^{2-n} E_{\bar\e}(\bar u; B_{2r_1})\lesssim r_1^{\alpha_0} E_{\bar\e}(\bar u;B_1)\lesssim \delta^2.
\end{equation*}
Lemma~\ref{l:smallscales} ensures that if 
$\delta$ is small enough (depending on $n$, $\lambda$, $\Lambda$, $f$ -- and $\e_0$ which depends itself only on $n$, $\lambda$, $\Lambda$ and $f$) there exists $\hat\alpha>0$ such that
\begin{equation*}
\hat r^{2-n}\int_{B_{\hat r}}\abs{\nabla \hat u}^2\lesssim \hat r^{\hat \alpha} E_{\hat\e}(\hat u; B_2)\qquad\forall \hat r\in (0,1).
\end{equation*}
Set $\alpha=\min(\hat \alpha,\alpha_0).$ Recalling \eqref{ubar}, we can apply the previous inequality, with $\hat{r}=\bar{r}/r_1,$ to obtain
\begin{equation*}
\bar r^{2-n}\int_{B_{\bar r}}\abs{\nabla\bar u}^2 \lesssim \left(\frac{\bar r}{r_1}\right)^\alpha E_{\hat\e}(\hat u; B_2)\lesssim \bar r^\alpha  E_{\bar\e}(\bar u;B_1)\qquad\forall \bar r\in (0,r_1).
\end{equation*}
By the Campanato-Morrey characterization of H\"older spaces this implies
\begin{equation*}
r_0^{\alpha}\abs{u_\e}^2_{C^{\alpha/2}(B_{r_0})}
\lesssim (2r_0)^{2-n}E_\e(u_\e;B_{2r_0}).
\end{equation*}
\end{proof}

\section{Boundary estimates}\label{s:bdry}

In this section, we extend our previous results in the interior to both strong and weak anchoring settings. As it will become apparent, there are technical differences between the two cases; while the proof for strong anchoring works basically along the same lines as the interior case, under either assumption \eqref{A3a} or \eqref{A3b}, a proof not relying on \eqref{A3a} is, at the moment, out of reach for weak anchoring. The reason for this is that we are unable to modify Luckhaus' construction in a way that allows us to control the boundary term. We elaborate more on this in \S~\ref{ssWA}.

\subsection{The strong anchoring case}\label{ssSA}

If the boundary $\partial\Omega$ is of class $C^2$, we can cover it with small balls where it can be flattened, and after rescaling we are led to defining modified energy functionals of the form
\begin{equation*}
F_\e(u;B_2^+)=\int_{B_2^+}\left( W(x,u)+\frac{1}{\e^2}f(u)\right)a(x)\,dx,
\end{equation*}
where $B_2^+$ denotes the half ball $B_2\cap\lbrace x_n>0\rbrace\subset\R^n$, the quadratic form $W$ satisfies \eqref{A1}, and the weight $a(x)$ satisfies
\begin{equation}\tag{B1}\label{B1}
\norm{1-a}_{C^1}\leq \frac 12.
\end{equation}
We will denote by $F_\star$ the corresponding limiting energy functional for $\mathcal N$-valued maps.

To obtain boundary estimates for the original energy on $\Omega$ it suffices to consider maps $u_\e$ which minimize $F_\e$ in $B_2^+$ among maps $u$ such that $u=u_\e$ on $(\partial B_2)^+:=\partial B_2\cap\lbrace x_n>0\rbrace$ and satisfying fixed Dirichlet conditions 
\begin{equation*}
u=u_b \quad\text{ on }B'_2:=B_2\cap\lbrace x_n=0\rbrace,
\end{equation*}
for some $\mathcal N$-valued map $u_b$ of $C^2$ regularity.

\begin{thm}\label{t:bdry_D}Assume that $W$ satisfies \eqref{A1} and $a$ satisfies \eqref{B1}. Also, assume that $f$ satisfies \eqref{A2}, and that either \eqref{A3a} or \eqref{A3b} holds.
Then, there exist $\delta,\e_0>0$ and $\alpha\in (0,1)$ (depending on $n$, $\lambda$, $\Lambda$, $f$ and $M$) such that for any $r_0\in (0,1)$, $\e\in (0,r_0\e_0)$, and any
 $u_\e$ minimizing $F_\e(\cdot;B^+_{2r_0})$ with respect to its own boundary conditions and with $u_\e=u_b$ on $B'_{2r_0}$,
\begin{align*}
&(2r_0)^{2-n}F_\e(u_\e;B^+_{2r_0})+N(u_b;B'_{2r_0}) \leq \delta^2\quad\\
&\Longrightarrow\quad r_0^{2\alpha} \abs{u_{\e_\ell}}^2_{C^\alpha(B^+_{r_0})}\lesssim  (2r_0)^{2-n}F_\e(u_\e;B^+_{2r_0})+N(u_b;B'_{2r_0}),
\end{align*}
where
\begin{equation*}
N(u_b;B'_{r})=r^2\norm{\nabla u_b}^2_{L^\infty(B'_r)}+r^4\norm{\nabla^2 u_b}_{L^\infty(B'_r)},
\end{equation*}
and the constant in the above inequality depends on $\lambda$, $\Lambda$, $f$ and $M$.
\end{thm}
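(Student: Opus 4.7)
The plan is to mirror the interior argument of Theorem~\ref{t:int} by establishing boundary analogs of Lemmas~\ref{l:largescales} and \ref{l:smallscales} on the half-ball, and then iterating them exactly as in \S~\ref{ss:proofinterior}. The boundary data term $N(u_b;B'_{r})$ is included because it rescales like $F_\e$ under the dilations $x\mapsto rx$: the term $r^2\|\nabla u_b\|^2_{L^\infty}$ corresponds to the Dirichlet energy of an affine function on a ball of radius $r$, while $r^4\|\nabla^2 u_b\|_{L^\infty}$ absorbs the quadratic correction that appears after subtracting the affine approximation of $u_b$ at a boundary point.

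For the boundary large-scale decay I would prove: if $F_\e(u_\e;B_1^+) + N(u_b;B'_1) \le \delta_0^2$ then
\begin{equation*}
\theta_0^{2-n}\bigl(F_\e(u_\e;B_{\theta_0}^+) + N(u_b;B'_{\theta_0})\bigr) \le \tfrac{1}{2}\bigl(F_\e(u_\e;B_1^+) + N(u_b;B'_1)\bigr).
\end{equation*}
This follows the scheme of Lemma~\ref{l:largescales} by contradiction: extract a sequence $u_\ell$ of minimizers with vanishing small energy, deduce via \eqref{A2} and the Dirichlet trace on $B'_1$ strong $H^1(B_1^+)$ convergence of $u_\ell$ to the constant $z:=u_b(0)\in\mathcal N$, and adapt Lemma~\ref{l:modifboundary} to modify the values of $u_\ell$ only on the curved portion $(\partial B_1)^+$ while preserving $u_\ell=u_b$ on $B'_1$. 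Compare $u_\ell$ with the minimizer $\overline{w}_\ell$ of $\int W(\nabla w)\,a(x)\,dx$ among $\mathcal N$-valued maps matching the modified trace on $(\partial B_1)^+$ and $u_b$ on $B'_1$, then rescale $v_\ell:=(\overline w_\ell - z_\ell)/\alpha_\ell$ with an appropriate $z_\ell\to z$ and pass to the limit. The limit $v$ takes values in $T_z\mathcal N$ and minimizes $\int W(\nabla\tilde v)\,a(x)\,dx$ under that linear constraint; its Dirichlet data on $B'_1$ is the limit of $(u_b-z_\ell)/\alpha_\ell$, which is affine modulo a term of order $\|\nabla^2 u_b\|_{L^\infty(B'_1)}/\alpha_\ell$ controlled precisely by $N(u_b;B'_1)/\alpha_\ell^2$. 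Classical boundary regularity for linear elliptic systems with smooth Dirichlet data on a flat portion \cite[\S10]{ADN2} then gives the boundary analog of \eqref{eq:regv} on $B^+_{\theta_0}$, and the same cancellation computation as in the proof of Lemma~\ref{l:largescales} yields the contradiction.

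For the boundary analog of Lemma~\ref{l:smallscales}, I would let $U_b$ be a $C^2$ extension of $u_b$ to $B_2^+$ and work with $u_\e-U_b$, which vanishes on $B'_2$. Boundary elliptic estimates for systems with zero Dirichlet data on a flat portion (again \cite[\S10]{ADN2}) yield the same inequality \eqref{eq:ellipt_estim} localized in half-balls, because the cutoff-and-Poincar\'e argument producing \eqref{eq:ellipt_estim} applies verbatim after zero-extending through $B'_2$. The rest of the bootstrap is identical to the interior case under \eqref{A3a}, while under \eqref{A3b} the iteration on $L^{p_*}$ norms proceeds in the half-ball, the additional contributions of $\nabla U_b$ and $\nabla^2 U_b$ being absorbed into $N(u_b;B'_2)$. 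Combining the two boundary lemmas and iterating as in \S~\ref{ss:proofinterior}, observing that the rescaling step also rescales $N(u_b;\cdot)$ correctly, produces the claimed H\"older estimate on $B_{r_0}^+$.

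The main obstacle is the boundary adaptation of the modification lemma. In Lemma~\ref{l:modifboundary} the trace is modified in a collar of $\partial B_1$; here one must project $u_\ell$ onto $\mathcal N$ in a thin shell near $(\partial B_1)^+$ without disturbing $u_\ell|_{B'_1}=u_b$, most delicately along the corner $\partial B_1\cap\{x_n=0\}$ where the two boundary pieces meet. This requires a cutoff that interpolates, in a small collar of the spherical cap, between the modified $\mathcal N$-valued map and the fixed trace $u_b$, exploiting the $C^2$ regularity of $u_b$ to control the extra energy introduced. This is precisely the role of the term $N(u_b;B'_{2r_0})$ in the small-energy hypothesis, and it is the principal reason why the proof, while parallel in structure to the interior case, is technically more involved.
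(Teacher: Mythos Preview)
Your overall strategy matches the paper's: establish boundary analogs of Lemmas~\ref{l:largescales} and \ref{l:smallscales} on half-balls, then iterate as in \S~\ref{ss:proofinterior}. However, the paper's execution is simpler than yours in two respects, and you omit one step that is needed in the proof of the theorem itself.

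First, the corner issue you flag as the main obstacle is absent from the paper's argument. Since $B_1^+$ is bilipschitz equivalent to $B_1$, Lemma~\ref{l:modifboundary} is applied \emph{as is} to the full boundary $\partial(B_1^+)=(\partial B_1)^+\cup B'_1$; the interpolant $\varphi_\ell$ equals $u_\ell$ on that whole boundary, in particular $\varphi_\ell=u_b$ on $B'_1$, so the resulting comparison map is admissible. There is no need for a special construction that leaves $B'_1$ untouched, nor for any corner cutoff.

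Second, the paper states the large-scale lemma (Lemma~\ref{l:largescales_bdry_D}) with conclusion
\[
\theta_0^{2-n}F_\e(u_\e;B_{\theta_0}^+)\le\tfrac12\max\bigl(F_\e(u_\e;B_1^+),\,\eta_0\|\nabla u_b\|_{L^\infty(B'_1)}^2\bigr).
\]
The free constant $\eta_0$ means that in the contradiction sequence one may assume $\|\nabla u_b^\ell\|_\infty\ll\delta_\ell$, so after rescaling by $\alpha_\ell$ the limiting $T_z\mathcal N$-valued minimizer $v$ has \emph{constant} Dirichlet data on $B'_1$. This spares you the tracking of $(u_b-z_\ell)/\alpha_\ell$ and any ``affine plus quadratic remainder'' analysis.

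Finally, iterating the two boundary lemmas only yields energy decay on half-balls centered at points $x_0'\in B'_{r_0}$. In the paper's proof of the theorem, a general point $x_0=(x_0',\rho)\in B_{r_0}^+$ is handled in two regimes: for $r\ge\rho$ one uses the inclusion $B_r(x_0)\cap B_{2r_0}^+\subset B_{2r}(x_0')$ and the boundary decay at $x_0'$; for $r<\rho$ one invokes the \emph{interior} Theorem~\ref{t:int} in $B_\rho(x_0)\subset B_{2r_0}^+$, the small-energy hypothesis there being supplied by the boundary estimate at $x_0'$ at scale $\rho$. Your outline should include this combination step.
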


\begin{lem}\label{l:largescales_bdry_D}
Assume \eqref{A1}, \eqref{B1} and \eqref{A2}.
There exist $\delta_0,\e_0,\eta_0>0$ and $\theta_0\in (0,1/2)$ (depending on $n$, $\lambda$, $\Lambda$ and $f$) such that any minimizer $u_\e$ of $F_\e(\cdot;B_1^+)$ with $\e\in (0,\e_0)$, $u_\e=u_b$ on $B'_1$  satisfies
\begin{align*}
&F_\e(u_\e;B_1^+) \leq \delta_0^2\quad\\
&\Longrightarrow \quad \theta_0^{2-n}F_\e(u_\e,B_{\theta_0}^+) \leq \frac 1{2} \max\left(F_\e(u_\e;B_1^+),\eta_0\norm{\nabla u_b}^2_{L^\infty(B'_1)} \right).
\end{align*}
\end{lem}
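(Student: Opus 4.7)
The plan is to mimic the contradiction argument used for Lemma~\ref{l:largescales}, with three new ingredients: the domain becomes a half-ball $B_1^+$, the competitors must respect the prescribed Dirichlet datum $u=u_b$ on the flat part $B'_1$, and the improvement factor $1/2$ must accommodate a ``floor'' produced by a nontrivial $u_b$. The parameter $\eta_0$ is fixed first: its purpose is to ensure that, in the contradiction setup, $\norm{\nabla u_b^\ell}_{L^\infty(B'_1)}$ and $\delta_\ell$ remain comparable, so that the single normalization $\gamma_\ell^2:=\max(\delta_\ell^2,\eta_0\norm{\nabla u_b^\ell}_{L^\infty(B'_1)}^2)$ controls all the rescalings. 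The parameter $\theta_0$ is chosen sufficiently small relative to $\sqrt{\eta_0}$.

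I would first suppose, by contradiction, that there exist $\e_\ell\to 0$, data $u_b^\ell\in C^2(B'_1;\mathcal N)$, and minimizers $u_\ell$ of $F_{\e_\ell}(\cdot;B_1^+)$ with $u_\ell=u_b^\ell$ on $B'_1$, such that $\delta_\ell^2:=F_{\e_\ell}(u_\ell;B_1^+)\to 0$ and $\theta_0^{n-2}F_{\e_\ell}(u_\ell;B_{\theta_0}^+)>\frac 12\gamma_\ell^2$. By \eqref{A2}, $u_\ell\to z_\star$ strongly in $H^1(B_1^+)$ for some $z_\star\in\mathcal N$, and the contradiction assumption forces $\norm{\nabla u_b^\ell}_{L^\infty}\to 0$ (since $F_{\e_\ell}(u_\ell;B_{\theta_0}^+)\le\delta_\ell^2\to 0$), hence $u_b^\ell\to z_\star$ uniformly. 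Choosing a good radius $\rho\in[3/4,1]$ via Fubini, I would then apply a hemispherical variant of Lemma~\ref{l:modifboundary} to $u_\ell|_{(\partial B_\rho)^+}$ \emph{keeping the equator $\partial B'_\rho$ fixed}, so as to produce an $\mathcal N$-valued map $w_\ell\colon(\partial B_\rho)^+\to\mathcal N$ with $w_\ell=u_b^\ell$ on $\partial B'_\rho$, together with a thin transition layer $\varphi_\ell$, satisfying energy estimates analogous to \eqref{eq:estimw_l}.

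Let $\overline w_\ell$ minimize $\int_{B_\rho^+}W(\nabla w)\,a$ among $\mathcal N$-valued maps equal to $w_\ell$ on $(\partial B_\rho)^+$ and to $u_b^\ell$ on $B'_\rho$. A competitor obtained by combining the nearest-point projection onto $\mathcal N$ of a smooth extension of $u_b^\ell$ (well-defined since $u_b^\ell$ is close to $z_\star\in\mathcal N$) with the $0$-homogeneous extension of $w_\ell$ near the hemisphere yields the upper bound
\[
\alpha_\ell^2:=\int_{B_\rho^+}W(\nabla\overline w_\ell)\,a \;\lesssim\; \delta_\ell^2+\norm{\nabla u_b^\ell}_{L^\infty(B'_\rho)}^2 \;\lesssim\; \gamma_\ell^2/\eta_0,
\]
while using $\overline w_\ell$ as a competitor for $u_\ell$ (glued via $\varphi_\ell$) yields $\delta_\ell^2\le(1+o(1))\alpha_\ell^2$. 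Setting $z_\ell:=u_b^\ell(0)$, $v_\ell:=(\overline w_\ell-z_\ell)/\gamma_\ell$ and $\tilde v_\ell:=(u_\ell-z_\ell)/\gamma_\ell$, the family $v_\ell$ is bounded in $H^1(B_\rho^+)$ and its trace on $B'_\rho$ coincides with $(u_b^\ell-u_b^\ell(0))/\gamma_\ell$, uniformly bounded in $W^{1,\infty}(B'_\rho)$ by $1/\sqrt{\eta_0}$. Up to a subsequence $v_\ell\rightharpoonup v$ weakly in $H^1$, the traces converge uniformly to a Lipschitz $v_b^\star\colon B'_\rho\to T_{z_\star}\mathcal N$, and adapting the comparison-map construction of \cite[Proposition~1]{luckhaus88} to respect the flat Dirichlet trace shows that $v_\ell\to v$ strongly in $H^1$ and $v$ minimizes $\int W(\nabla\tilde v)\,a$ over $T_{z_\star}\mathcal N$-valued maps with the same trace.

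The minimization for $v$ is a linear elliptic system with smooth coefficients on $B_\rho^+$ with Lipschitz Dirichlet data on the flat part. Classical boundary regularity (e.g.\ \cite{ADN2}) yields $\norm{\nabla v}_{L^\infty(B_{1/2}^+)}\lesssim\norm{\nabla v}_{L^2(B_\rho^+)}+\norm{\nabla v_b^\star}_{L^\infty}$, hence
\[
\theta_0^{2-n}\int_{B_{\theta_0}^+}W(\nabla v)\,a \;\lesssim\; \theta_0^{2}\bigl(\norm{\nabla v}_{L^2(B_\rho^+)}^2+\norm{\nabla v_b^\star}_{L^\infty}^2\bigr) \;\lesssim\; \theta_0^{2}/\eta_0.
\]
Transferring this decay to $u_\ell$ via the bilinear-form manipulation of Lemma~\ref{l:largescales} (expanding $W(\nabla\overline w_\ell-\nabla u_\ell)$ and using $\int B(\nabla v,\nabla(\tilde v-v))\,a=0$, since $\tilde v-v$ is an admissible $T_{z_\star}\mathcal N$-valued variation) leads to $\theta_0^{n-2}F_{\e_\ell}(u_\ell;B_{\theta_0}^+)\lesssim(\theta_0^{2}/\eta_0)\gamma_\ell^2+o(\gamma_\ell^2)$, contradicting the assumption once $\theta_0^2/\eta_0$ is chosen sufficiently small. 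The main obstacle I anticipate is the hemispherical boundary modification: it must preserve the equator values of $u_\ell$ exactly so that $\overline w_\ell$ inherits the correct Dirichlet datum on $B'_\rho$, and this is precisely the purpose of the technical boundary modification lemma mentioned in the appendix.
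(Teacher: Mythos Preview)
Your overall strategy is sound, but it diverges from the paper's in one key organizational choice, and that choice introduces a gap in the elliptic step.

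\textbf{How the paper sets up the contradiction.} The paper does \emph{not} fix $\eta_0$ in advance. Instead, it absorbs $\eta_0$ into the contradiction: assuming the conclusion fails for every $\eta_0$, one obtains sequences with $\norm{\nabla u_b^\ell}_{L^\infty}^2 \ll \delta_\ell^2$. After normalizing by $\alpha_\ell$ (as in Lemma~\ref{l:largescales}), the rescaled boundary data $(u_b^\ell-z_\ell)/\alpha_\ell$ on $B'_1$ then has \emph{vanishing} gradient in the limit, so the limiting map $v$ has \emph{constant} Dirichlet datum on the flat part. For a $T_z\mathcal N$-valued minimizer of $\int W(\nabla v)\,a$ with constant data on $B'_1$, full interior-type elliptic regularity up to the flat boundary is immediate, and one gets the clean decay $\theta_0^{2-n}\int_{B_{\theta_0}^+}W(\nabla v)\le\frac14\int W(\nabla v)$ exactly as in \eqref{eq:regv}.

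\textbf{The gap in your version.} By fixing $\eta_0$ first and normalizing by $\gamma_\ell$, your limiting boundary datum $v_b^\star$ is merely Lipschitz. The estimate you invoke,
\[
\norm{\nabla v}_{L^\infty(B_{1/2}^+)}\lesssim\norm{\nabla v}_{L^2(B_\rho^+)}+\norm{\nabla v_b^\star}_{L^\infty},
\]
is \emph{false} in general: already for the Laplacian on a half-space, Lipschitz Dirichlet data produce a normal derivative with a logarithmic singularity (this is the failure of the Hilbert transform on $L^\infty$). ADN-type Schauder theory requires $C^{1,\alpha}$ data to control $\nabla v$ in $L^\infty$, and the lemma's hypothesis gives no control on $\nabla^2 u_b^\ell/\gamma_\ell$.

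\textbf{How to repair it, and the comparison.} Your route can be rescued: $L^p$ estimates (which \emph{are} in \cite{ADN2}) give $\norm{\nabla v}_{L^p(B_{1/2}^+)}\lesssim\norm{\nabla v}_{L^2}+\norm{\nabla v_b^\star}_{L^\infty}$ for every $p<\infty$, and H\"older's inequality then yields $\theta_0^{2-n}\int_{B_{\theta_0}^+}W(\nabla v)\lesssim\theta_0^{2-2n/p}(\cdot)$, which is enough to close the contradiction once $\theta_0$ is chosen small depending on $\eta_0$ and $p$. Your bilinear-form transfer with the $\gamma_\ell$ normalization is correct (the cancellation works since $\alpha_\ell^2/\gamma_\ell^2$ converges and $\delta_\ell^2\le(1+o(1))\alpha_\ell^2$). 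Still, the paper's choice---pushing $\eta_0$ into the contradiction so that the limit problem has constant flat data---is strictly simpler: it keeps the normalization by $\alpha_\ell$ parallel to Lemma~\ref{l:largescales} and avoids any boundary-regularity subtlety for the linearized problem.
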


\begin{proof}
As for Lemma~\ref{l:largescales},
the proof is by contradiction, assuming the existence of  sequences $\e_\ell$, $u_\ell$, $W_\ell$, $a_\ell$, $u_b^\ell$ with $F_{\e_\ell}(u_\ell;B_1^+)\to 0$ and $\norm{\nabla u_b^\ell}_\infty \ll F_{\e_\ell}(u_\ell;B_1^+)$, but such that the energy decay fails. Then one needs three ingredients:
\begin{itemize}
\item the boundary modification Lemma~\ref{l:modifboundary} to construct a sequence of $\mathcal N$-valued minimizing maps $\overline w_\ell$,
\item the $H^1$-compactness of $v_\ell = \alpha_\ell^{-1}(\overline w_\ell -z_\ell)$, where $\alpha_\ell^2 = \int_{B_1^+}W(\overline w_\ell)$ and $z_\ell\in\mathcal N$ is appropriately chosen,
\item and the equivalent energy decay estimate for minimizers of $\int_{B_1^+} W(\nabla v)$ under the linear constraint $v\in T_z\mathcal N$ a.e., with constant boundary data on $B_1'$,
\end{itemize}
Lemma~\ref{l:modifboundary} can be applied here without modification, since $B_1^+$ is bilipschitz equivalent to $B_1$ and $u_\e=u_b$ is already $\mathcal N$-valued on $B'_1$. The compactness of $v_\ell$ follows as in Lemma~\ref{l:largescales} from the argument in \cite[Proposition~1]{luckhaus88}, where the extension lemma~\cite[Lemma~1]{luckhaus88} can also be applied without modification thanks to the bilipshitz homeomorphism between $B_1$ and $B_1^+$. The energy decay for $T_z\mathcal N$ minimizers comes from standard elliptic estimates. The rest of the proof is as in Lemma~\ref{l:largescales}.
\end{proof}

 The following lemma gives the decay estimate at finer scales; the difference between this and the corresponding estimate for the interior is a boundary term that, as it will be seen, behaves well under rescaling because it only involves derivatives of the boundary data $u_b .$
\begin{lem}\label{l:smallscales_bdry_D}
Assume \eqref{A1},\eqref{B1},\eqref{A2}, and \eqref{A3a} or \eqref{A3b}. There exists $\alpha>0$ (depending on $n$, $\lambda$, $\Lambda$, $f$, but also on $\e$), such that the following holds.
For all $\e>0$, any minimizer $u_\e$ of $F_{\e}(\cdot;B^+_2)$ with $u_\e=u_b$ on $B'_2$ and $F_\e(u_\e;B_2^+)\leq 1$ satisfies
\begin{equation*}
\frac{1}{r^\alpha}\frac{1}{r^{n-2}}\int_{B_r^+}\abs{\nabla u_\e}^2\lesssim E_{\e}(u_\e;B^+_2)+\norm{\nabla u_b}^2_{L^\infty(B'_2)}+\norm{\nabla^2u_b}^2_{L^\infty(B'_2)}\qquad\forall r\in (0,1),
\end{equation*}
where the inequality is up to a constant depending on $n$, $\lambda$, $\Lambda$, $f$, but also on $\e$.
\end{lem}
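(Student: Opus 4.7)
The plan is to mirror the proof of Lemma~\ref{l:smallscales} by reducing to an \emph{interior}-style bootstrap of elliptic estimates, after subtracting a controlled extension of the Dirichlet datum $u_b$. The weighted Euler--Lagrange equation for $F_\e$ takes the form
\begin{equation*}
-\partial_i\bigl(A_{ij}^{\alpha\beta}(x)\,\partial_j u_\e^\beta\bigr)+\frac{a(x)}{\e^2}\partial_\alpha f(u_\e)=0 \quad\text{in }B_2^+,
\end{equation*}
where $A_{ij}^{\alpha\beta}=a\,a_{ij}^{\alpha\beta}$ is $C^1$ and, by \eqref{A1} and \eqref{B1}, still uniformly elliptic. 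Write this as $\mathcal L u_\e=\e^{-2}F(u_\e)$ with $F(z)=-a\,\nabla f(z)$, which has the same growth behavior as $\nabla f$.

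First, I would use a standard extension to produce $U_b\in C^2(\overline{B_2^+})$ with $U_b=u_b$ on $B_2'$ and $\|\nabla^\ell U_b\|_{L^\infty}\lesssim \|\nabla^\ell u_b\|_{L^\infty(B_2')}$ for $\ell=1,2$. Setting $\tilde u=u_\e-U_b$, we have $\tilde u=0$ on $B_2'$, and $\tilde u$ solves
\begin{equation*}
\mathcal L \tilde u=\frac{1}{\e^2}F(u_\e)-\mathcal L U_b\quad\text{in }B_2^+,
\end{equation*}
with $\|\mathcal L U_b\|_{L^\infty(B_2^+)}\lesssim \|\nabla u_b\|_{L^\infty(B_2')}+\|\nabla^2 u_b\|_{L^\infty(B_2')}$. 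The key analytic input replacing \eqref{eq:ellipt_estim} is the Agmon--Douglis--Nirenberg $L^p$ estimate up to the flat boundary for the Dirichlet problem: if $v$ is supported in $B_{R_2}\cap\{x_n\ge 0\}$ with $v=0$ on $B_{R_2}'$, then
\begin{equation*}
\|\nabla^2 v\|_{L^p(B_{R_1}^+)}\lesssim \|\mathcal L v\|_{L^p(B_{R_2}^+)}+\frac{1}{(R_2-R_1)^2}\|\nabla v\|_{L^p(B_{R_2}^+)},
\end{equation*}
applied to $v=\tilde u\,\varphi$ for suitable radial cutoffs $\varphi$.

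With this half-ball analogue of \eqref{eq:ellipt_estim} in hand, I would run the two bootstrap arguments exactly as in Lemma~\ref{l:smallscales}, carrying the additional source $\mathcal L U_b$ through every iteration. Under \eqref{A3a}, the nondegeneracy $|\nabla f(u_\e)|\lesssim f(u_\e)^{1/2}$ gives an $L^2$ bound on the right-hand side in terms of $F_\e(u_\e;B_2^+)^{1/2}$, and Sobolev iteration yields $\|\nabla\tilde u\|_{L^\infty(B_{1/2}^+)}\lesssim C(\e)\bigl[F_\e(u_\e;B_2^+)^{1/2}+\|\nabla u_b\|_{L^\infty(B_2')}+\|\nabla^2 u_b\|_{L^\infty(B_2')}\bigr]$. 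Under \eqref{A3b}, the same Caccioppoli-type iteration on $\|\nabla\tilde u\|_{L^{p_*}}$ as in Lemma~\ref{l:smallscales} goes through verbatim, the extension producing harmless additive terms controlled by $\|\nabla u_b\|_\infty^2+\|\nabla^2 u_b\|_\infty^2$. Adding back $\nabla U_b$ and using Campanato--Morrey gives the claimed Morrey decay on $B_r^+$ for some $\alpha>0$.

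The main obstacle I anticipate is purely technical: in the \eqref{A3b} case one must propagate the boundary contribution through the Young/H\"older inequality juggling in \eqref{eq:estimgradup*} and the subsequent steps without upsetting the absorption argument, i.e.\ the small $\eta$ trick. This is manageable because $\|\mathcal L U_b\|_{L^\infty}$ is a bounded forcing (independent of $\e$) that enters linearly, but one must verify that the extra terms $\|\nabla u_b\|_\infty^2+\|\nabla^2 u_b\|_\infty^2$ indeed appear only additively on the right-hand side, on equal footing with $F_\e(u_\e;B_2^+)$, rather than being multiplied by negative powers of $\e$ that would spoil the $\e$-uniform dependence (which here, as in Lemma~\ref{l:smallscales}, is allowed to degenerate since the constant is permitted to depend on $\e$).
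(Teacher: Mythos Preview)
Your proposal is correct and follows essentially the same strategy as the paper: reproduce the bootstrap of Lemma~\ref{l:smallscales} using Agmon--Douglis--Nirenberg $L^p$ estimates up to the flat boundary, with the boundary datum contributing an additive term controlled by $\|\nabla u_b\|_{L^\infty}+\|\nabla^2 u_b\|_{L^\infty}$. The only cosmetic difference is that the paper quotes the boundary elliptic estimate directly in the form
\[
\|\nabla^2 u\|_{L^p(B_{R_1}^+)}\lesssim \|\mathcal L u\|_{L^p(B_{R_2}^+)}+\frac{1}{(R_2-R_1)^2}\bigl(\|\nabla u\|_{L^p(B_{R_2}^+)}+\|\nabla u_b\|_{L^\infty(B'_{R_2})}+\|\nabla^2 u_b\|_{L^\infty(B'_{R_2})}\bigr),
\]
whereas you first subtract an extension $U_b$ to reduce to homogeneous Dirichlet data; this is precisely how one derives such an estimate, so the two presentations are equivalent.
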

\begin{proof}
The proof can be carried out as the proof of Lemma~\ref{l:smallscales}, replacing the interior elliptic estimates \eqref{eq:ellipt_estim} with boundary elliptic estimates. More precisely, still denoting by $\mathcal L$ the elliptic operator such that $\int W(\nabla \varphi)a(x)dx=\int \mathcal L\varphi\cdot\varphi\, a(x)dx$ for all test functions $\varphi$, solutions of the Dirichlet problem
\begin{equation*}
\mathcal L u =f\text{ in }B_1,\quad u=g\text{ on }B'_1,
\end{equation*}
satisfy, for all $0<R_1<R_2\leq 1$,
\begin{equation*}
\norm{\nabla^2u}_{L^p(B_{R_1})}\lesssim \norm{f}_{L^p(B_{R_2})}+\frac{1}{(R_2-R_1)^2}\left(\norm{\nabla u}_{L^p(B_{R_2})}+\norm{\nabla g}_{L^\infty(B'_{R_2})}+\norm{\nabla^2g}_{L^\infty(B'_{R_2})}\right),
\end{equation*}
where the inequality is up to a constant depending on $n$, $\lambda$, $\Lambda$, $p$, $R_1$ and $R_2$ (this follows from the estimates in \cite[\S~10]{ADN2}). This is enough to reproduce the proof of Lemma~\ref{l:smallscales}, under either \eqref{A3a} or \eqref{A3b}.
\end{proof}

\begin{proof}[Proof of Theorem~\ref{t:bdry_D}]
For $x_0'\in B'_{r_0}$ we set $\bar u(\bar x)=u_\e(x_0'+r_0\bar x)$, $\bar u_b(\bar x')=u_b(x'_0+r_0\bar x')$ and $\bar \e =\e/\e_0$. Provided $\delta$ is small enough we can argue as in the proof of Theorem~\ref{t:int} and iterate Lemma~\ref{l:largescales_bdry_D} to obtain
\begin{equation*}
\bar r^{2-n}F_{\bar\e}(\bar u;B^+_{\bar r})\lesssim \bar r^{\alpha_0}\left(\int_{B_1^+}W(\nabla\bar u)a(x)dx +  \norm{\nabla \bar u_b}_{L^\infty(B'_1)}\right)\qquad\forall \bar r\in [\bar\e/\e_0,1).
\end{equation*}
Then we set $r_1=\bar\e/\e_0$ and $\hat u(\hat x)=\bar u(r_1\hat x)$, $\hat u_b(\hat x')=\bar u_b(r_1\hat x')$, $\hat \e=\bar e/r_1 =\e_0$ and apply Lemma~\ref{l:smallscales_bdry_D} to deduce for all $\hat r\in (0,1)$,
\begin{align*}
\frac{1}{\hat r^{\hat\alpha}}\frac{1}{\hat r^{n-2}}\int_{B_{\hat r}^+}\abs{\nabla \hat u}^2 & \lesssim F_{\hat\e}(\hat u;B_2^+)+\norm{\nabla\hat u}^2_{L^\infty(B'_2)}+\norm{\nabla^2\hat u}^2_{L^\infty(B'_2)}\\
&= r_1^{2-n}F_{\bar\e}(\bar u; B_{2r_1}^+)+r_1^2\norm{\nabla\bar u_b}_{L^\infty(B'_{2r_1})}+r_1^4\norm{\nabla^2\bar u_b}_{L^\infty(B'_{2r_1})}\\
&\lesssim r_1^{\alpha_0} \left(\int_{B_1^+}W(\nabla\bar u)a(x)dx +  N(\bar u_b;B'_1)\right).
\end{align*}
Rescaling and setting $\alpha=\min(\hat\alpha,\alpha_0)$ we infer
\begin{equation*}
\frac{1}{\bar r^{n-2}}\int_{B_{\bar r}^+}\abs{\nabla \bar u}^2 \lesssim r^{\alpha} \left(\int_{B_1^+}W(\nabla\bar u)a(x)dx +  N(\bar u_b;B'_1)\right).
\end{equation*}
Coming back to the original map $u$, the estimate above implies that
\begin{equation*}
\frac{1}{r^{n-2}}\int_{B_r(x_0')\cap B_{2r_0}^+}\abs{\nabla u}^2\lesssim \left(\frac r{r_0}\right)^\alpha
\left(\int_{B_{2r_0}^+}W(\nabla\bar u)a(x)dx + N(\bar u_b;B'_{2r_0})\right)\qquad\forall r\in (0,r_0),
\end{equation*}
provided $x_0'\in B_{r_0}'$. 

Next we consider $x_0\in B_{r_0}^+$ and write $x_0=(x_0',\rho)$ for some $\rho\in (0,r_0)$ and $x_0'=(x_0',0)\in B'_{r_0}$.
For all $r\in [\rho,r_0]$, we have $B_{r}(x_0)\cap B_{2r_0}^+\subset B_{2r}(x_0')$ and therefore by the above,
\begin{align*}
\frac{1}{r^{n-2}}\int_{B_r(x_0)\cap B_{2r_0}^+}\abs{\nabla u}^2 &
\lesssim \frac{1}{(2r)^{n-2}}\int_{B_{2r}(x_0')\cap B_{2r_0}^+}\abs{\nabla u}^2\\
&\lesssim r^{\alpha} \left(\int_{B_{2r_0}^+}W(\nabla\bar u)a(x)dx +   N(\bar u_b;B'_{2r_0})\right),
\end{align*}
whence in particular
\begin{equation*}
\frac{1}{(\rho/2)^{n-2}}\int_{B_{\rho/2}(x_0)\cap B_{2r_0}^+}\abs{\nabla u}^2\lesssim \delta^2.
\end{equation*}
Since $B_{\rho}(x_0)\subset B_{2r_0}^+$, provided $\delta$ is small enough we may therefore apply the interior estimates (Theorem~\ref{t:int}) in $B_{\rho}(x_0)$, and conclude that
\begin{equation*}
\frac{1}{r^{n-2}}\int_{B_r(x_0)\cap B_{2r_0}^+}\abs{\nabla u}^2 \lesssim r^{\alpha} \left(\int_{B_{2r_0}^+}W(\nabla\bar u)a(x)dx +   N(\bar u_b;B'_{2r_0})\right)
\end{equation*}
holds for all $r\in (0,r_0]$ and all $x_0\in B_{r_0}^+$. This implies the desired $C^{\alpha/2}$ H\"older estimate.
\end{proof}

\subsection{Weak anchoring}\label{ssWA}

We will denote by $F_\e^{wa}$ the energy
\begin{equation*}
F_\e^{wa}(u;B_2^+)=F_\e(u;B_2^+)+\int_{B'_2}g(x',u)\, dx',
\end{equation*}
where $g\colon B'_2\times \R^k\to [0,\infty)$ is a smooth anchoring energy density. Here we will always work under the assumption of a uniform $L^\infty$ bound \eqref{A3a}, and may therefore assume
\begin{equation}\tag{G}\label{G}
\norm{g}_{C^2(B'_2\times B_M)}\leq G,
\end{equation}
for some $G>0$. We will consider minimizers $u_\e$ of $F_\e^{wa}$ with respect to their own boundary conditions on $(\partial B_2)^+$. We will denote by $F_\star^{wa}$ the corresponding limiting energy, i.e. the same energy restricted to $\mathcal N$-valued maps.

\begin{thm}\label{t:bdry_WA_a}
Assume that $f$ satisfies \eqref{A2}, that $W$ satisfies \eqref{A1} and $a$ satisfies \eqref{B1}, and moreover that \eqref{A3a} and \eqref{G} hold.
There exist $\delta,\e_0>0$ and $\alpha\in (0,1)$ (depending on $n$, $\lambda$, $\Lambda$, $f$,  $M$ and $G$) such that for any $r_0\in (0,1)$, $\e\in (0,r_0\e_0)$, and any
 $u_\e$ minimizing $F_\e^{wa}(\cdot;B^+_{2r_0})$ with respect to its own boundary conditions on $(\partial B_{2r_0})^+$,
\begin{align*}
&(2r_0)^{2-n}F_\e(u_\e;B^+_{2r_0})+r_0\norm{g}_{L^\infty(B'_{2r_0})} \leq \delta^2\quad\\
&\Longrightarrow\quad r_0^{2\alpha} \abs{u_{\e_\ell}}^2_{C^\alpha(B^+_{r_0})}\lesssim  (2r_0)^{2-n}F_\e^{wa}(u_\e;B^+_{2r_0}),
\end{align*}
where
the constant in the above inequality depends on $n$, $\lambda$, $\Lambda$, $f$, $M$ and $G$.
\end{thm}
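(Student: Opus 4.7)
The plan is to follow the same two-scale scheme as in the proofs of Theorems~\ref{t:int} and \ref{t:bdry_D}: establish a large-scale energy decay lemma for minimizers of $F_\e^{wa}$, a small-scale decay lemma via elliptic estimates for the associated mixed boundary value problem, iterate the former down to scale $r \sim \e$, apply the latter below that scale, and treat interior balls $B_\rho(x_0) \subset B_{r_0}^+$ using Theorem~\ref{t:int}. Under \eqref{A3a} and \eqref{G}, the anchoring contribution $\int_{B'_r} g(x', u_\e)$ rescales to order $r \|g\|_\infty$ and vanishes at small scales, so it can be treated as a controlled perturbation of the interior energy.

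The main new ingredient is an analog of Lemma~\ref{l:largescales_bdry_D}: under the assumptions of the theorem, there exist $\delta_0, \e_0, \eta_0 > 0$ and $\theta_0 \in (0, 1/2)$ such that every minimizer of $F_\e^{wa}(\cdot; B_1^+)$ with $F_\e^{wa}(u_\e; B_1^+) \leq \delta_0^2$ and $\e \in (0, \e_0)$ satisfies
\begin{equation*}
\theta_0^{2-n} F_\e^{wa}(u_\e; B_{\theta_0}^+) \leq \tfrac12 \max\bigl(F_\e^{wa}(u_\e; B_1^+),\, \eta_0 \|g\|_{L^\infty(B'_1)}\bigr).
\end{equation*}
The proof is by contradiction, following Lemma~\ref{l:largescales_bdry_D}: extract sequences $u_\ell, \e_\ell, g_\ell$ and use \eqref{A3a} to deduce $u_\ell \to z \in \mathcal N$ strongly in $H^1(B_1^+)$. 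Pick a good radius $\rho \in [3/4, 1]$, apply Lemma~\ref{l:modifboundary} on $(\partial B_\rho)^+$ to produce $w_\ell, \varphi_\ell$, and let $\overline w_\ell$ minimize $\int_{B_1^+} W(\nabla \cdot)$ among $\mathcal N$-valued maps with trace $w_\ell$ on $(\partial B_1)^+$ and \emph{free} trace on $B'_1$. Gluing yields a competitor $\tilde w_\ell$ for $u_\ell$ and
\begin{equation*}
\delta_\ell^2 = F_{\e_\ell}^{wa}(u_\ell; B_1^+) \leq (1+o(1))\alpha_\ell^2 + o(1)\delta_\ell^2 + C\|g_\ell\|_{L^\infty(B'_1)},
\end{equation*}
with $\alpha_\ell^2 = \int_{B_1^+} W(\nabla \overline w_\ell)$; the last term is controlled using \eqref{G} together with the fact that $\overline w_\ell$ takes values in the bounded set $\mathcal N$. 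In the regime $\|g_\ell\|_\infty \ll \delta_\ell^2$ we proceed with $\delta_\ell^2 \lesssim \alpha_\ell^2$; the complementary regime is absorbed by the $\eta_0\|g\|_\infty$ factor in the $\max$. Rescaling $v_\ell = \alpha_\ell^{-1}(\overline w_\ell - z_\ell)$ and applying Luckhaus' construction (valid in $B_1^+$ via the bilipschitz identification with $B_1$) gives a strong $H^1$ limit $v \in H^1(B_1^+; T_z\mathcal N)$ minimizing $\int W(\nabla\tilde v)$ over $T_z\mathcal N$-valued competitors sharing the trace on $(\partial B_1)^+$ and with free trace on $B'_1$. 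This mixed Dirichlet-Neumann linear problem satisfies the elliptic estimate \eqref{eq:regv}, and the contradiction closes as in Lemma~\ref{l:largescales_bdry_D}.

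For the small-scale step, minimizers $u_\e$ of $F_\e^{wa}$ solve
\begin{equation*}
\mathcal L u_\e = \tfrac{1}{\e^2}\nabla f(u_\e) \text{ in } B_2^+, \qquad (\text{conormal derivative}) = -\nabla_u g(x', u_\e) \text{ on } B'_2,
\end{equation*}
and under \eqref{A3a}-\eqref{G} the Neumann datum is uniformly bounded in $C^0(B'_2)$. Agmon-Douglis-Nirenberg $L^p$ estimates for oblique-derivative problems then yield, for all $0 < R_1 < R_2 \leq 1$,
\begin{equation*}
\|\nabla^2 u_\e\|_{L^p(B_{R_1}^+)} \lesssim \|\mathcal L u_\e\|_{L^p(B_{R_2}^+)} + \tfrac{1}{(R_2-R_1)^2}\|\nabla u_\e\|_{L^p(B_{R_2}^+)} + C(M, G),
\end{equation*}
and combined with $|\nabla f(u_\e)| \lesssim f(u_\e)^{1/2}$ from \eqref{A2}-\eqref{A3a}, the bootstrap of the proof of Lemma~\ref{l:smallscales} under \eqref{A3a} goes through unchanged, yielding an $L^\infty$ bound on $\nabla u_\e$ in $B_{1/2}^+$ and the required $r^\alpha$ decay for $r^{2-n}\int_{B_r^+}|\nabla u_\e|^2$. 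The final assembly mirrors the proof of Theorem~\ref{t:bdry_D}: at boundary points $x_0' \in B'_{r_0}$, iterate the large-scale lemma down to scale $r_1 = \bar\e/\e_0$, then apply the small-scale lemma; for interior points $x_0 = (x_0', \rho)$, combine the boundary estimate on $B_{2r}(x_0') \supset B_r(x_0)$ for $r \geq \rho$ with Theorem~\ref{t:int} inside $B_\rho(x_0)$ for $r < \rho$, and conclude via Campanato-Morrey. The main obstacle -- and the reason why \eqref{A3a} is indispensable here -- is the Luckhaus step: without an a priori $L^\infty$ bound on $u_\ell$, the rescaled minimizer $\overline w_\ell$ is only controlled in $H^1$, we lose control of its trace on $B'_1$ and hence of the free-boundary term $\int_{B'_1} g_\ell(x', \overline w_\ell)$; in the Dirichlet case this was automatic because $u_\e = u_b \in \mathcal N$ was imposed on $B'_1$, whereas weak anchoring leaves the trace free, and this cannot be repaired by assumption \eqref{A3b} alone.
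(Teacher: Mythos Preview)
Your proposal is correct and follows essentially the same two-scale scheme as the paper: the paper's proof of Theorem~\ref{t:bdry_WA_a} likewise reduces to a large-scale improvement lemma (Lemma~\ref{l:largescales_bdry_WA_a}) and a small-scale elliptic lemma (Lemma~\ref{l:smallscales_bdry_WA_a}), and highlights the same key scaling observation that $\tilde g(\tilde x,u)=r\,g(r\tilde x,u)$, so $\|g\|_\infty$ shrinks under iteration. Your contradiction argument for the large-scale lemma, the free-boundary linear limiting problem, and the ADN bootstrap at small scales all match the paper's treatment.

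One small imprecision: in your last paragraph you locate the obstruction to dropping \eqref{A3a} at ``control of the trace of $\overline w_\ell$ on $B'_1$'', but $\overline w_\ell$ is $\mathcal N$-valued and hence automatically bounded, so $\int_{B'_1} g_\ell(x',\overline w_\ell)\lesssim \|g_\ell\|_\infty$ regardless. The actual obstruction, as the paper's remark after the theorem explains, is one step earlier: without \eqref{A3a} one cannot control $\int_{B'_1\setminus B'_{1-\lambda}} g_\ell(x',\varphi_\ell)$ for the Luckhaus-type interpolant $\varphi_\ell$ produced by Lemma~\ref{l:modifboundary}, since $\varphi_\ell$ is built from $u_\ell$ and need not be bounded. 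This does not affect the validity of your argument under \eqref{A3a}.
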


\begin{proof}
As above, this small energy estimate is a consequence of an energy improvement result ensuring regularity at large scales (Lemma~\ref{l:largescales_bdry_WA_a} below), and one establishing a corresponding property at small scales (Lemma~\ref{l:smallscales_bdry_WA_a} below). The proof is a straightforward adaptation of  Theorem~\ref{t:bdry_D}, given the following crucial scaling property: if $u$ minimizes $F_\e^{wa}$ in $B_r$, then $\tilde u(\tilde x):=u(r\tilde x)$ minimizes $\widetilde F_{\tilde \e}^{wa}$ in $B_1$ where $\tilde\e=\e/r$ and $\widetilde F_{\tilde \e}^{wa}$ corresponds to  $\widetilde W(\tilde x,\xi)=W(r\tilde x,\xi)$, $\tilde a(\tilde x)=a(r\tilde x)$ and, most importantly, $\tilde g(\tilde x,u)=r g(r\tilde x, u)$. Hence as we rescale $\norm{g}_{L^\infty}$ keeps getting smaller and this is what makes the iteration work.
\end{proof}

\begin{rem}
In the special case of Ginzburg-Landau functionals where $W=\abs{\nabla u}^2$, $f(u)=(1-\abs{u}^2)^2$ and $g(x',u)=\abs{u-u_b(x')}^2$, uniform convergence up to the boundary is proved for critical points in the recent work \cite{BPW18}. For more general anchoring energies however, and even in the isotropic case $W=\abs{\nabla u}^2$, this was not known before the present work.
 To prove this result, we need to assume \eqref{A3a}, a property that holds in the isotropic case under rather mild assumptions, e.g. $u\cdot f(u)\geq 0$ and $u\cdot\nabla_u g(u)\geq 0$ for $\abs{u}\geq M$ (in fact in that case \eqref{A3a} holds for all critical points provided the Euler-Lagrange equations are satisfied, see e.g. \cite{lamy14}). Here, the main reason for not being able to drop \eqref{A3a} is that otherwise we are unable to construct an extension $\varphi$ as in Lemma~\ref{l:modifboundary}, that  satisfies in addition a bound on $\int g(x',\varphi(x'))$. This impedes obtaining an equivalent of Lemma~\ref{l:smallscales} or \ref{l:smallscales_bdry_D}, which is essential to deal with \enquote{large} scales $r\geq \e$. On the other hand, regarding small scales (i.e. an equivalent of Lemma~\ref{l:largescales} or \ref{l:smallscales_bdry_D}), requiring \eqref{A3b}  together with some physically motivated restrictions on $g(x',u)$, is enough to ensure the desired estimate, even in the absence of \eqref{A3a}.
\end{rem}

\begin{lem}\label{l:largescales_bdry_WA_a}
Assume \eqref{A1}, \eqref{B1}, \eqref{A2} and \eqref{A3a}.
There exist $\delta_0,\e_0,\eta_0>0$ and $\theta_0\in (0,1/2)$ (depending on $n$, $\lambda$, $\Lambda$ and $f$) such that any minimizer $u_\e$ of $F_\e^{wa}(\cdot;B_1^+)$ with $\e\in (0,\e_0)$,  satisfies
\begin{align*}
&F_\e^{wa}(u_\e;B_1^+) \leq \delta_0^2\quad\\
&\Longrightarrow \quad \theta_0^{2-n}F_\e(u_\e,B_{\theta_0}^+) \leq \frac 1{2} \max\left(F_\e(u_\e;B_1^+),\eta_0\norm{g}_{L^\infty(B_1'\times B_M)} \right).
\end{align*}
\end{lem}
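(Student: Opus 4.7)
The plan is to argue by contradiction along the lines of Lemma~\ref{l:largescales_bdry_D}, exploiting the fact that the contradiction hypothesis itself forces the anchoring energy to be subordinate to the bulk energy. I would assume the failure of the conclusion: there exist sequences $\e_\ell\to 0$, $W_\ell$, $a_\ell$, $g_\ell$ (with $\norm{g_\ell}_{C^2(B_1'\times B_M)}\leq G$), and minimizers $u_\ell$ of $F_{\e_\ell}^{wa}(\cdot;B_1^+)$ with $\norm{u_\ell}_\infty\leq M$ and $F_{\e_\ell}^{wa}(u_\ell;B_1^+)=\delta_\ell^2\to 0$, yet
\[\theta_0^{2-n}F_{\e_\ell}(u_\ell;B_{\theta_0}^+)>\tfrac12\max\bigl(\beta_\ell^2,\,\eta_0\gamma_\ell\bigr),\]
with $\beta_\ell^2=F_{\e_\ell}(u_\ell;B_1^+)$ and $\gamma_\ell=\norm{g_\ell}_{L^\infty(B_1'\times B_M)}$. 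The key preliminary observation is that this forces $\gamma_\ell\leq 2\eta_0^{-1}\theta_0^{2-n}\beta_\ell^2$, so the anchoring contribution is automatically dominated by the bulk energy once $\eta_0$ is chosen large. Compactness via \eqref{A3a} and \eqref{A2} then yields $u_\ell\to z\in\mathcal N$ strongly in $H^1(B_1^+)$.

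The next step is to build a good $\mathcal N$-valued competitor, mirroring the strong anchoring proof almost verbatim. A Fubini selection produces $\rho\in[3/4,1]$ with $F_{\e_\ell}(u_\ell;\partial B_\rho\cap\{x_n>0\})\lesssim\beta_\ell^2$; after renormalizing to $\rho=1$, invoking Lemma~\ref{l:modifboundary} via the bilipschitz identification $B_1^+\simeq B_1$ produces a modified $\mathcal N$-valued trace $w_\ell$ on the spherical part and an extension $\varphi_\ell$ in a thin shell of width $\lambda_\ell\to 0$. A structural simplification compared to the strong anchoring case is that no trace is imposed on the flat face $B_1'$, so competitors are entirely free there. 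I would then define $\overline w_\ell\in H^1(B_1^+;\mathcal N)$ as a minimizer of the pure Dirichlet energy $\int_{B_1^+}W_\ell(\nabla w)a_\ell(x)\,dx$ subject only to $w=w_\ell$ on the spherical part (the $g_\ell$-term is deliberately omitted from this definition and treated as a perturbation). Comparison of $\overline w_\ell$ with the $0$-homogeneous extension of $w_\ell$ gives $\alpha_\ell^2:=\int_{B_1^+}W_\ell(\nabla\overline w_\ell)a_\ell(x)\,dx\lesssim\beta_\ell^2$; conversely, gluing $\varphi_\ell$ with the rescaled $\overline w_\ell((1-\lambda_\ell)\cdot)$ produces an admissible competitor $\tilde w_\ell$ for $u_\ell$, and the minimality of $u_\ell$ yields
\[\beta_\ell^2 \leq (1+o(1))\alpha_\ell^2 + C\gamma_\ell \leq (1+o(1))\alpha_\ell^2 + C'\eta_0^{-1}\beta_\ell^2,\]
where the boundary term is estimated by the trivial pointwise bound $g_\ell(\,\cdot\,,\tilde w_\ell)\leq\gamma_\ell$, valid uniformly because \eqref{A3a} keeps $u_\ell$ (and hence $\tilde w_\ell$) in a fixed compact set. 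Fixing $\eta_0$ large enough absorbs the anchoring contribution and yields $\alpha_\ell\sim\beta_\ell$.

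The final step is the linearization, which is essentially identical to that of Lemmas~\ref{l:largescales} and~\ref{l:largescales_bdry_D}. Setting $z_\ell=\pi_{\mathcal N}(\dashint_{B_1^+}\overline w_\ell)\to z$ and $v_\ell=\alpha_\ell^{-1}(\overline w_\ell-z_\ell)$, Luckhaus' construction \cite[Proposition~1]{luckhaus88}, transferred to $B_1^+$ by bilipschitz change of variables, provides strong $H^1$ convergence $v_\ell\to v\in H^1(B_1^+;T_z\mathcal N)$, where $v$ minimizes $\int_{B_1^+}W(\nabla\tilde v)$ among $T_z\mathcal N$-valued maps sharing the trace of $v$ on the spherical boundary and with \emph{free} trace on $B_1'$ (the $g_\ell$-term contributes only at order $\gamma_\ell/\alpha_\ell\to 0$ in the linearization since it was excluded from the definition of $\overline w_\ell$). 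This is a linear elliptic system with mixed Dirichlet/conormal boundary, and standard regularity at an interior point of the Neumann face yields $\theta_0\in(0,1/2)$ (depending only on $\lambda,\Lambda,n$) such that $\theta_0^{2-n}\int_{B_{\theta_0}^+}W(\nabla v)\leq\tfrac14\int_{B_1^+}W(\nabla v)$. Transferring this decay back to $\overline w_\ell$ by strong $H^1$ convergence and then to $u_\ell$ by the quantitative closeness inherited from the minimality comparison (exactly as in the final computation in the proof of Lemma~\ref{l:largescales}) contradicts the hypothesis. The hard part of the argument is not the linearization but the comparison estimate: without \eqref{A3a} one would need an enhancement of Lemma~\ref{l:modifboundary} producing an extension $\varphi$ with controlled $\int g(x',\varphi)$, which is precisely the obstacle discussed in \S\ref{ssWA}; the $L^\infty$ bound circumvents this by replacing the required integral estimate with the trivial pointwise bound $g(x',\varphi)\leq\gamma_\ell$.
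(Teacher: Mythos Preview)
Your overall strategy matches the paper's: contradiction argument, boundary modification, auxiliary $\mathcal N$-valued minimizer $\overline w_\ell$, linearization to a $T_z\mathcal N$-valued elliptic problem with free flat boundary, and transfer of the decay back to $u_\ell$. Two points deserve comment.

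First, a genuine but harmless variation: you define $\overline w_\ell$ as a minimizer of the pure elastic energy $\int_{B_1^+}W_\ell(\nabla w)a_\ell$ (omitting $g_\ell$), while the paper's $\overline w_\ell$ minimizes the full weak-anchoring functional. Your choice is legitimate and arguably cleaner, since the anchoring then enters only through the crude bound $\int_{B_1'}g_\ell\leq C\gamma_\ell\lesssim\eta_0^{-1}\beta_\ell^2$ in the comparison step and is entirely absent from the linearization. The price is that the comparison yields only $\beta_\ell^2\leq(1+C\eta_0^{-1}+o(1))\alpha_\ell^2$ rather than $(1+o(1))\alpha_\ell^2$, but this still suffices for the final contradiction once $\eta_0$ is fixed large.

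Second, and more substantively: you invoke the bilipschitz identification $B_1^+\simeq B_1$ both for Lemma~\ref{l:modifboundary} and for Luckhaus' compactness, as in the strong-anchoring Lemma~\ref{l:largescales_bdry_D}. The paper instead \emph{extends the maps by even reflection} across $\{x_n=0\}$. The distinction matters here. In the strong-anchoring case the trace is Dirichlet on all of $\partial B_1^+$, so bilipschitz to $B_1$ gives a standard Dirichlet problem and Luckhaus applies directly. In the weak-anchoring case the flat face $B_1'$ carries a free condition, and a bilipschitz transfer mixes this free piece into $\partial B_1$; Luckhaus' extension then requires $H^1$ control of the traces on the full sphere, which you only have on the spherical part (on $B_1'$ you control merely the $H^{1/2}$ trace of $\overline w_\ell$). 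Reflection resolves this cleanly: it turns $B_1'$ into an interior hyperplane, so Lemma~\ref{l:modifboundary} and the Luckhaus construction run on the full sphere of the reflected ball, and the anchoring cost of the interpolant $\varphi_\ell$ lands only on the thin annulus $B_1'\setminus B_{1-\lambda}'$, bounded by $\lambda\gamma_\ell$. You should replace ``bilipschitz'' by ``even reflection'' at those two steps; the remainder of your argument then goes through.
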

\begin{proof}
As for Lemmas~\ref{l:largescales} and \ref{l:largescales_bdry_D}, the proof is by contradiction, assuming the existence of sequences $\e_\ell$, $u_\ell$, $W_\ell$, $a_\ell$, $g_\ell$ with $F_{\e_\ell}(u_\ell;B_1^+)\to 0$ and $\norm{g_\ell}_\infty \ll F_{\e_\ell}(u_\ell;B_1^+)$, but such that the energy decay fails. Extending $u_\ell$ by symmetry to $B_1$, one may apply the boundary modification Lemma~\ref{l:modifboundary}, the weak anchoring energy of $\varphi$ simply being controlled by $\int_{B'_1\setminus B'_{1-\lambda}} g(\varphi)\lesssim \lambda\norm{g_\ell}_{L^\infty}$. Thus one obtains a sequence of minimizing $\mathcal N$-valued maps $\overline w_\ell$ with weak anchoring. The strong $H^1$ compactness of $v_\ell=\alpha_\ell^{-1}(w_\ell-z_\ell)$ is then obtained as in \cite[Proposition~1]{luckhaus88}, adapting the extension lemma \cite[Lemma~1]{luckhaus88} by first extending the maps by symmetry, as also explained in \cite{bdryregweakanchor}. The limit $v$ is then a minimizer of $\int_{B_1^+} W(v)$ with free boundary conditions on $B_1'$, and under the linear constraint $v\in T_z\mathcal N$ a.e., and enjoys good energy decay thanks to classical elliptic estimates.
\end{proof}

\begin{lem}\label{l:smallscales_bdry_WA_a}
Assume \eqref{A1},\eqref{B1},\eqref{A2}, \eqref{A3b} and \eqref{G}. 
For all $\e>0$, any minimizer $u_\e$ of $F_{\e}^{wa}(\cdot;B^+_2)$ satisfies
\begin{equation*}
\norm{\nabla u_\e}_{L^\infty(B_1^+)}^2\lesssim F_{\e}(u_\e;B^+_2)+\norm{g}^2_{C^2(B'_2\times B_M)}\qquad\forall r\in (0,1),
\end{equation*}
where the inequality is up to a constant depending on $n$, $\lambda$, $\Lambda$, $f$, $M$, $G$ but also on $\e$.
\end{lem}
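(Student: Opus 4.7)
\medskip
\noindent\textbf{Proof plan for Lemma~\ref{l:smallscales_bdry_WA_a}.} The strategy is to mirror the bootstrap argument from the proof of Lemma~\ref{l:smallscales} under \eqref{A3a}, replacing the interior elliptic estimates \eqref{eq:ellipt_estim} by boundary estimates adapted to the Neumann-type condition that arises from weak anchoring. (I interpret the hypothesis as \eqref{A3a}: the right-hand side and the use of \eqref{G} implicitly require the pointwise bound $\norm{u_\e}_\infty\leq M$ in order to evaluate $g$ and $\nabla f$ in a controlled range.)

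First I would write the Euler--Lagrange system: since $u_\e$ minimizes $F_\e^{wa}(\cdot;B_2^+)$ with free data on $B'_2$, it satisfies
\begin{equation*}
\mathcal{L}_a u_\e = \frac{1}{\e^2}\nabla f(u_\e) \text{ in } B_2^+, \qquad \partial_\nu^W u_\e = -\frac{1}{a(x')}\nabla_u g(x', u_\e) \text{ on } B'_2,
\end{equation*}
where $\mathcal{L}_a$ is the divergence-form operator associated with $\int W(\nabla\cdot)\, a(x)\,dx$ and $\partial_\nu^W$ is the corresponding conormal derivative along the flat part $B'_2$. Then I would invoke classical ADN-type $L^p$ estimates for elliptic systems with inhomogeneous Neumann data (see \cite[\S~10]{ADN2}): combined with a cutoff argument as in \eqref{eq:ellipt_estim}, they yield for $1<p<\infty$ and $1/2\leq R_1<R_2\leq 1$,
\begin{equation*}
\norm{\nabla^2 u_\e}_{L^p(B_{R_1}^+)} \lesssim \norm{\mathcal{L}_a u_\e}_{L^p(B_{R_2}^+)} + \norm{\nabla_u g(\cdot,u_\e)}_{W^{1-1/p,p}(B'_{R_2})} + \frac{1}{(R_2-R_1)^2}\norm{\nabla u_\e}_{L^p(B_{R_2}^+)}.
\end{equation*}

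Using \eqref{A3a} to get $|u_\e|\leq M$, combined with \eqref{A2} we obtain $|\nabla f(u_\e)|\lesssim f(u_\e)^{1/2}$ pointwise, so that $\norm{\mathcal{L}_a u_\e}_{L^2(B_2^+)}\lesssim \e^{-2}F_\e(u_\e;B_2^+)^{1/2}$. By \eqref{G},
\begin{equation*}
|\nabla_u g(x',u_\e)|\lesssim G, \qquad |\nabla_{x'}\nabla_u g(x',u_\e)| + |\nabla^2_u g(x',u_\e)|\,|\nabla u_\e| \lesssim G(1+|\nabla u_\e|),
\end{equation*}
so that (after any reasonable normal extension, then a trace inequality) the boundary term is controlled by $G(1+\norm{\nabla u_\e}_{L^p(B_{R_2}^+)})$. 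Starting from $p_0=2$ and iterating with $p_{k+1}=p_k^*=np_k/(n-p_k)$ as in the proof of Lemma~\ref{l:smallscales} under \eqref{A3a}, after finitely many steps one reaches $p_{k+1}>n$, whence Sobolev embedding yields $\norm{\nabla u_\e}_{L^\infty(B_1^+)}\lesssim (1+\e^{-2\kappa})\bigl(F_\e(u_\e;B_2^+)^{1/2}+G\bigr)$ for some $\kappa=\kappa(n)\in\mathbb{N}$. Squaring delivers the claimed bound.

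The main obstacle is the self-coupling on the boundary: differentiating $g(x',u_\e(x'))$ introduces $\nabla u_\e$ itself, so the Neumann data's $W^{1,p}$-norm a priori depends on the very quantity one is trying to estimate. However, because \eqref{A3a} confines $u_\e$ to $B_M$ and \eqref{G} bounds $\nabla^2_u g$ there, this dependence is \emph{linear} in $\nabla u_\e$. Consequently it closes up through the same finite bootstrap: at each step, the $L^{p_k}$-norm of $\nabla u_\e$ appearing on the right-hand side is precisely the one controlled at the previous step, and no absorption or small-constant trick is needed.
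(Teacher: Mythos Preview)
Your proposal is correct and follows essentially the same route as the paper: write the Euler--Lagrange system with the conormal boundary condition, invoke ADN-type $L^p$ estimates up to the flat boundary, absorb the boundary term---whose dependence on $\nabla u_\e$ is linear thanks to \eqref{A3a} and \eqref{G}---into the bootstrap, and iterate exactly as in Lemma~\ref{l:smallscales}. The only cosmetic difference is that the paper handles the boundary data by an explicit extension $\Phi=\nabla_u G(x,u)$ with $G(x,u)=\chi(x_n)g(x',u)$ and the estimate $\|\nabla\Phi\|_{L^p(B_2^+)}\lesssim \|g\|_{C^2(B'_2\times B_M)}+\|\nabla u\|_{L^p(B_2^+)}$, which is equivalent to your trace-norm formulation; and your reading of the hypothesis as \eqref{A3a} (rather than \eqref{A3b}) is indeed what the paper uses throughout \S\ref{ssWA}.
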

\begin{proof}
As in Lemmas~\ref{l:smallscales} and \ref{l:smallscales_bdry_D}, the proof relies on elliptic estimates for the equation satisfied by $u=u_\e$, namely
\begin{equation*}
\left\lbrace \begin{aligned}
\mathcal L u &=\frac{1}{\e^2}\nabla f(u)&\text{in }B_2^+,\\
\mathcal B u &=\nabla_u g(x',u) &\text{on }B'_2,
\end{aligned}
\right.
\end{equation*}
where
\begin{equation*}
(\mathcal L u )^\beta=a^{-1}\partial_j(a\cdot a_{ij}^{\alpha\beta}\partial_i u^{\alpha}),\quad \mathcal (\mathcal B u)^\beta =-a\cdot a_{in}^{\alpha\beta}\partial_i u^\alpha.
\end{equation*}
We appeal to classical $L^p$ estimates for elliptic systems \cite[\S~10]{ADN2}, which ensure
\begin{equation*}
\norm{\nabla^2v}_{L^p(B_1^+)}\lesssim \norm{\nabla v}_{L^p(B_2^+)}+\norm{\mathcal L v}_{L^p(B_2^+)}+\norm{\nabla\Phi}_{ L^p(B_2^+)}\quad\forall\Phi\text{ such that }\mathcal B v = \tr \Phi,
\end{equation*}
where the inequality is up to a constant depending on $\lambda$, $\Lambda$, $n$ and $p\in (1,\infty)$. To apply this to our map $u$ we consider an extension $G$ of $g$ given by $G(x,u)=\chi(x_n)g(x',u)$ where $x=(x',x_n)$ and $\chi$ is a fixed smooth function with $\chi(0)=1$ and $\chi\equiv 0$ on $(1,\infty)$. That way we can use $\Phi=\nabla_u G(x,u)$ in the above and estimate
\begin{align*}
\norm{\nabla [\nabla_u G(x,u)]}_{L^p(B_2^+)}\lesssim \norm{g}_{C^2(B'_2\times B_M)} + \norm{\nabla u}_{L^p(B_2^+)}.
\end{align*}
We deduce that $u$ satisfies
\begin{equation*}
\norm{\nabla^2 u}_{L^p(B_1^+)}\lesssim \norm{\nabla u}_{L^p(B_2^+)} + \norm{\nabla f(u)}_{L^p(B_2^+)} 
+ \norm{g}_{C^2(B'_2\times B_M)},
\end{equation*}
and this estimate can be bootstrapped exactly as in the proof of Lemma~\ref{l:smallscales}.
\end{proof}

\appendix

\section{Proof of the boundary modification lemma}\label{a:modifboundary}

In this section we prove Lemma~\ref{l:modifboundary}. For the reader's convenience we recall here its statement:

\medskip
\noindent \textbf{Lemma~\ref{l:modifboundary}}. {\it
There exists $\delta_1=\delta_1(\mathcal N,f)>0$ such that for all $0< \e \leq \lambda<1$ and any $u\in H^1(\partial B_1;\R^k)$ with $E_\e(u;\partial B_1)\leq\delta_1^2\lambda^{n-3}$, there exist
\begin{align*}
& w\in H^1(\partial B_1;\mathcal N),\quad\varphi\in H^1(B_1\setminus B_{1-\lambda};\R^k),\\
& \text{with }\varphi = u\text{ on }\partial B_1,\quad \varphi= w((1-\lambda)\cdot)\text{ on }\partial B_{1-\lambda},
\end{align*}
satisfying the bounds
\begin{equation*}
E_\e(\varphi;B_1\setminus B_{1-\lambda})\lesssim \lambda E_\e(u;\partial B_1)\quad\text{and}\quad \int_{\partial B_1} \abs{\nabla w}^2 \lesssim E_\e (u;\partial B_1).
\end{equation*}
}
\medskip

\begin{proof}[Proof of Lemma~\ref{l:modifboundary}]
The strategy  is very similar to Luckhaus' extension lemma \cite[Lemma~1]{luckhaus88}. For the reader's convenience we sketch the full argument, and will go into details only at points where we need to depart from \cite{luckhaus88}.
We assume $\lambda=2^{-\nu}$ for some $\nu\in\mathbb N$ and, using the bilipschitz equivalence of $B_1$ with the open unit cube,  obtain a partition of $\partial B_1$ as
\begin{equation*}
\partial B_1=\bigsqcup_{j=0}^{n-1} Q_j,\quad Q_j=\bigsqcup_{i=1}^{k_j}e_{i}^j,
\end{equation*}
where each $j$-cell $e_{i}^j$ is bilipschitz equivalent to $B_\lambda^j$, the $j$-dimensional open ball  of radius $\lambda$. 
This decomposition of $\partial B_1$ induces a partition of $B_1\setminus B_{1-\lambda}$ as
\begin{equation*}
B_1\setminus B_{1-\lambda} =\bigsqcup_{j=0}^{n-1} \widehat Q_j,\quad \widehat Q_j =\bigsqcup_{i=1}^{k_j}\hat e_{i}^j,\quad
\hat e_i^j =
\left\lbrace x\in B_1\setminus B_{1-\lambda}\colon \frac{x}{\abs{x}}\in e_i^j\right\rbrace.
\end{equation*}
Moreover by Fubini's theorem we may assume that
\begin{align*}
\int_{Q_j} \abs{\nabla u}^2 \, d\mathcal H^j 
&\lesssim \lambda^{j+1-n}\int_{\partial B_1}\abs{\nabla u}^2\, d\mathcal H^{n-1},\\
\int_{Q_j} f(u)\, d\mathcal H^j
&\lesssim \lambda^{j+1-n}\int_{\partial B_1} f(u)\, d\mathcal H^{n-1}.
\end{align*} 
On the boundary of each two-dimensional cell $e_i^2$ (which is composed of $4$ one-dimensional cells) there holds
\begin{align*}
(\osc_{\partial e_i^2} u)^2 &\leq \lambda\int_{\partial e_i^2}\abs{\nabla u}^2\lesssim \lambda^{3-n}E_\e(u;\partial B_1)\lesssim \delta_1^2,\\
\fint_{\partial e_i^2}f(u)  & \lesssim \e^2 \lambda^{1-n}E_\e(u;\partial B_1)\lesssim \lambda^{3-n}E_\e(u;B_1)\lesssim \delta_1^2.
\end{align*}
If $\delta_1$ is small enough, this implies thanks to \eqref{A2} that
\begin{equation*}
\sup_{\partial e_i^2}\big\vert u - \fint_{\partial e_i^2}u\big\vert^2 + \dist^2(\fint_{\partial e_i^2}u,\mathcal N) \lesssim \lambda^{3-n}E_\e(u;\partial B_1)\lesssim\delta_1^2.
\end{equation*}
Therefore, the harmonic extension $\bar u$ of $u_{\lfloor \partial e_i^2}$ to $e_i^2$, i.e. $\Delta\bar u=0$ in $e_i^2$ and $\bar u=u$ on $\partial e_i^2$, satisfies
\begin{equation*}
\sup_{e_i^2}\dist^2(\bar u,\mathcal N) \lesssim \lambda^{3-n}E_\e(u;\partial B_1) \lesssim \delta_1^2.
\end{equation*}
In particular, provided $\delta_1$ is small enough, $w:=\pi_{\mathcal N}(\bar u)$ is well defined in $e_i^2$. Note for later use that $\bar u=u$ on $Q_1$, and thus
\begin{equation*}
w=\pi_{\mathcal N}(u)\text{ on }Q_1.
\end{equation*}
The harmonic extension $\bar u$ satisfies
\begin{equation*}
\int_{e_i^2}\abs{\nabla\bar u}^2\, d\mathcal H^{2} \lesssim \lambda \int_{\partial e_i^2}\abs{\nabla u}^2\, d\mathcal H^{1},
\end{equation*}
and therefore
\begin{equation*}
\int_{Q_2}\abs{\nabla w}^2 \, d\mathcal H^{2}\lesssim \int_{Q_2}\abs{\nabla\bar u}^2\, d\mathcal H^{2} \lesssim \lambda\int_{Q_1}\abs{\nabla u}^2 \, d\mathcal H^{1} \lesssim \lambda^{3-n}E_\e(u_\e;\partial B_1).
\end{equation*}
On the higher dimensional skeletons $Q_j$ ($j\geq 3$) we define the $\mathcal N$-valued map $w$ by induction, via 0-homogeneous extensions: identifying $e_i^{j}$ with $B_\lambda^{j}$ (through a bilipschitz homeomorphism that we omit to write here), one may set
\begin{equation*}
w(x)=w\left(\lambda\frac{x}{\abs{x}}\right)
\end{equation*}
thus defining $w$ on $e_i^{j}$ through its boundary values on $\partial  e_i^{j}$, which correspond to previously defined values of $w$ on $Q_{j-1}$. Since $j\geq 3$, such $0$-homogenous extension has finite energy, and we obtain by induction the estimates
\begin{align*}
\int_{Q_j}\abs{\nabla w}^2 \, d\mathcal H^{j}
&\lesssim \lambda^{j+1-n} E_\e(u_\e;\partial B_1).
\end{align*}
For $j=n-1$ this shows that
\begin{equation*}
\int_{\partial B_1}\abs{\nabla w}^2 \, d\mathcal H^{n-1} \lesssim E_\e(u_\e;\partial B_1).
\end{equation*}
It remains to define the map $\varphi$ on $B_1\setminus B_{1-\lambda}$. We do it on each skeleton $\widehat Q_j$ by induction, similarly to what is done in \cite[Lemma~1]{luckhaus88}. On each cell $\hat e_i^1$ we set
\begin{equation*}
\varphi(x)=u\left(\frac{x}{\abs{x}}\right) + \frac{1-\abs{x}}{\lambda}\left(w\left(\frac{x}{\abs{x}}\right) - u\left(\frac{x}{\abs{x}}\right) \right).
\end{equation*}
Since $\dist(u,\mathcal N)\lesssim\delta_1$ and  $w=\pi_{\mathcal N}(u)$ on $Q_1$, we have
\begin{align*}
f(\varphi)&\lesssim \dist^2(\varphi,\mathcal N)= \abs{u-\pi_{\mathcal N}(u)}^2\\
&\lesssim f(u) \qquad\text{a.e. on }\hat e_i^1,
\end{align*}
and therefore
\begin{equation*}
\int_{\widehat Q_1}f(\varphi) \, d\mathcal H^{2} \lesssim \lambda\int_{Q_1} f(u)\,d\mathcal H^1 \lesssim \lambda^{3-n}\int_{\partial B_1}f(u)\,d\mathcal H^{n-1}.
\end{equation*}
We also have, recalling that $w=\pi_{\mathcal N}(u)$ on $Q_1$ and $\e\leq\lambda$,
\begin{align*}
\int_{\widehat Q_1}\abs{\nabla\varphi}^2 \, d\mathcal H^2 &\lesssim \lambda \left(
\int_{Q_1}\abs{\nabla u}^2\, d\mathcal H^1 + \int_{Q_1}\abs{\nabla w}^2 \,d\mathcal H^1 + \frac{1}{\lambda^2}\int_{Q_1}\abs{u-w}^2\, d\mathcal H^1
\right) \\
&\lesssim \lambda \int_{Q_1}\abs{\nabla u}^2\,d\mathcal H^1 + \frac 1\lambda \int_{Q_1}f(u)\,d\mathcal H^1\\
&\lesssim  \lambda E_\e(u;Q_1) \lesssim \lambda^{3-n}E_\e(u;\partial B_1).
\end{align*}
On the higher dimensional skeletons $\widehat Q_j$ ($j\geq 2$) we extend $\varphi$ by induction, via $0$-homogenous extensions: identifying $\hat e_i^j$ with $B_\lambda^{j+1}$ (through a bilipschitz homeomorphism that we omit to write here), one may set
\begin{equation*}
\varphi(x)=\varphi\left(\lambda \frac{x}{\abs{x}}\right),
\end{equation*}
thus defining $\varphi$ on $\hat e_i^j$ through its boundary values on $\partial \hat e_i^j$, which either correspond to values of $u$ and $w$ on $Q_{j}$ or to previously defined values of $\varphi$ on $\widehat Q_{j-1}$. Since $j\geq 2$, such $0$-homogenous extension has finite energy, and we obtain by induction the estimates
\begin{align*}
E_\e(\varphi;\widehat Q_j)
&\lesssim \lambda^{j+2-n} E_\e(u;\partial B_1).
\end{align*}
For $j=n-1$ this concludes the proof.
\end{proof}

\section{Local compactness}\label{a:compact}

In this section we prove the following local compactness property of sequences of minimizers with bounded energy.

\begin{prop}\label{p:compact}
Assume that $f$ satisfies \eqref{A2} and $W$ satisfies \eqref{A1}. For $\e>0$, let $u_\e$ minimize $E_\e(\cdot;B_1)$ with respect to its own boundary conditions, such that $\liminf_{\e\to 0} E_\e(u_\e;B_1)<\infty$. Then there is a subsequence $\e_\ell\to 0$ such that $u_{\e_\ell}$ converges strongly in $H^1_{loc}(B_1;\R^k)$ to a map $u_\star\in H^1_{loc}(B_1;\mathcal N)$ which minimizes $E_\star(\cdot;B_\rho)$ for any $\rho\in (0,1)$ (among $\mathcal N$-valued maps, and with respect to its own boundary conditions).
\end{prop}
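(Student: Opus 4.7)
The plan is to follow the by-now classical strategy: extract a weakly convergent subsequence, identify the limit as $\mathcal N$-valued, establish minimality against $\mathcal N$-valued competitors via a boundary interpolation argument, and finally upgrade to strong convergence using uniform convexity of $W$ in $\nabla u$.

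From ellipticity \eqref{A1} and the energy bound, $\nabla u_\e$ is uniformly bounded in $L^2(B_1)$. Since $\int_{B_1}f(u_\e)\le C\e^2\to 0$ and $\liminf_{|z|\to\infty}f(z)>0$ by \eqref{A2}, the measure of $\{|u_\e|>R\}$ vanishes as $\e\to 0$ for $R$ large, and a truncation argument yields a subsequence $u_{\e_\ell}\rightharpoonup u_\star$ weakly in $H^1_{loc}(B_1;\R^k)$, with convergence in $L^2_{loc}$ and a.e. Fatou's lemma then gives $\int_{B_1}f(u_\star)\le\liminf_\ell\int_{B_1}f(u_{\e_\ell})=0$, hence $u_\star\in\mathcal N$ a.e.

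To obtain minimality, fix $\rho_0\in(0,1)$. By Fubini, for a.e.\ $\rho\in(\rho_0,1)$ a further subsequence satisfies
\[
E_{\e_\ell}(u_{\e_\ell};\partial B_\rho)\le C_\rho,\qquad \|u_{\e_\ell}-u_\star\|_{L^2(\partial B_\rho)}\to 0,\qquad \|\dist(u_{\e_\ell},\mathcal N)\|_{L^2(\partial B_\rho)}\lesssim\e_\ell,
\]
the last bound coming from \eqref{A2} and $\int_{\partial B_\rho} f(u_{\e_\ell})\lesssim\e_\ell^2$. Given $v\in H^1(B_\rho;\mathcal N)$ with $v=u_\star$ on $\partial B_\rho$, the plan is to build a competitor $v_\ell$ with $v_\ell=u_{\e_\ell}$ on $\partial B_\rho$ and $E_{\e_\ell}(v_\ell;B_\rho)\to E_\star(v;B_\rho)$ by splitting $B_\rho$ into three nested regions: an outermost shell of thickness $\e_\ell$ on which $v_\ell$ linearly interpolates between $u_{\e_\ell}$ and its projection $\pi_{\mathcal N}(u_{\e_\ell})$ (the $L^2$ closeness $\lesssim\e_\ell$ keeps the full-energy contribution $O(\e_\ell)$); an intermediate shell of thickness $\lambda_\ell\to 0$ on which $v_\ell$ bridges $\pi_{\mathcal N}(u_{\e_\ell})|_{\partial B_\rho}$ to $v|_{\partial B_\rho}$ through a Luckhaus-type construction in the spirit of Lemma~\ref{l:modifboundary}, entirely within $\mathcal N$, so the potential vanishes and the Dirichlet part is $o(1)$; and the innermost ball, where $v_\ell$ is a radial rescaling of $v$, whose energy converges to $E_\star(v;B_\rho)$.

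Combining with the minimality of $u_{\e_\ell}$ yields $E_{\e_\ell}(u_{\e_\ell};B_\rho)\le E_{\e_\ell}(v_\ell;B_\rho)=E_\star(v;B_\rho)+o(1)$, and with the lower semicontinuity $E_\star(u_\star;B_\rho)\le\liminf_\ell E_{\e_\ell}(u_{\e_\ell};B_\rho)$ this shows $u_\star$ minimizes $E_\star(\cdot;B_\rho)$ in the claimed class. Choosing $v=u_\star$ gives $\lim_\ell E_{\e_\ell}(u_{\e_\ell};B_\rho)=E_\star(u_\star;B_\rho)$, and dropping the nonnegative potential contribution gives $\int_{B_\rho}W(x,\nabla u_{\e_\ell})\to\int_{B_\rho}W(x,\nabla u_\star)$. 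Combined with weak $L^2$ convergence of $\nabla u_{\e_\ell}$ and the uniform convexity of $\xi\mapsto W(x,\xi)$ from \eqref{A1}, this upgrades to strong $L^2$ convergence of $\nabla u_{\e_\ell}$ on $B_\rho$; since $\rho\in(\rho_0,1)$ and $\rho_0\in(0,1)$ are arbitrary, strong $H^1_{loc}$ convergence follows. The main obstacle is the three-layer competitor construction: one must transition from the $\R^k$-valued $u_{\e_\ell}$ to the $\mathcal N$-valued $v$ while simultaneously controlling the Dirichlet integral, the potential term (sensitive to leaving $\mathcal N$), and the shell thicknesses. Decoupling these difficulties by first projecting onto $\mathcal N$ on a shell commensurate with $\e_\ell$, then interpolating entirely within $\mathcal N$, is the key idea and relies on the Luckhaus-type technology of Lemma~\ref{l:modifboundary}.
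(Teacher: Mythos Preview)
Your overall strategy is correct and is precisely the one the paper follows: weak compactness, identification of an $\mathcal N$-valued limit, a competitor/comparison argument on a good sphere to get minimality and energy convergence, and then uniform convexity of $W$ to upgrade to strong $H^1$ convergence. The place where your sketch has a genuine gap is the first layer of your three-shell competitor. You propose to linearly interpolate, on a shell of thickness $\e_\ell$, between $u_{\e_\ell}$ and its nearest-point projection $\pi_{\mathcal N}(u_{\e_\ell})$ on $\partial B_\rho$. But in dimension $n\geq 3$ the sphere $\partial B_\rho$ is at least two-dimensional, and the information you have on $u_{\e_\ell}\!\restriction_{\partial B_\rho}$ is only an $H^1$ bound together with $\|\dist(u_{\e_\ell},\mathcal N)\|_{L^2(\partial B_\rho)}\lesssim \e_\ell$. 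Neither yields pointwise (or even a.e.) closeness to $\mathcal N$, so $\pi_{\mathcal N}(u_{\e_\ell})$ need not be defined as an $H^1$ map on $\partial B_\rho$, and the first shell cannot be built as you describe. Relatedly, Lemma~\ref{l:modifboundary}, which you invoke for the middle layer, carries a \emph{small} energy hypothesis on $\partial B_1$ and thus does not apply here, where the sphere energy is only bounded.

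The paper resolves both issues at once via its Lemma~\ref{l:extension}, a Luckhaus-type extension taking a general $u\in H^1(\partial B_1;\R^k)$ and an $\mathcal N$-valued $v_\star$ with bounded Dirichlet energies and small $\int_{\partial B_1}|u-v_\star|^2$, and producing an interpolant on a thin annulus that controls \emph{both} $\int|\nabla\varphi|^2$ and $\int f(\varphi)$. The point is that the projection step is not done globally on $\partial B_1$ but only on the $1$-skeleton of a fine triangulation, where the $H^1$ bound together with the one-dimensional embedding $H^1\hookrightarrow C^{1/2}$ \emph{does} force $L^\infty$ closeness to $\mathcal N$; one then interpolates linearly from $u$ to $\pi_{\mathcal N}(u)$ and geodesically from $\pi_{\mathcal N}(u)$ to $v_\star$ on those edges, and fills in the higher cells by $0$-homogeneous extension. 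In effect, your ``decoupling'' idea (project first, then interpolate inside $\mathcal N$) is exactly right, but it must be carried out on the $1$-skeleton inside a single Luckhaus construction rather than as two separate radial shells.
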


The crucial ingredient is the following variant of Luckhaus' extension lemma (see also \cite{canevari17} for related results in the Landau-de Gennes setting).

\begin{lem}\label{l:extension}
There exists $\eta\in (0,1)$ such that for all $\lambda\in (0,1)$ and any $u\in H^1(\partial B_1;\R^k)$, $v_\star\in H^1(\partial B_1;\mathcal N)$ with
\begin{align*}
\int_{\partial B_1}\abs{\nabla u}^2\, d\mathcal H^{n-1} + \int_{\partial B_1}\abs{\nabla v_\star}^2 \,d\mathcal H^{n-1}  \leq 1
\qquad\text{and}\qquad\int_{\partial B_1}\abs{u-v_\star}^2\, d\mathcal H^{n-1} \leq \eta^2\lambda^{2n-4},
\end{align*}
there exists $\varphi\in H^1(B_1\setminus B_{1-\lambda})$ such that
\begin{align*}
\varphi & = \begin{cases} u & \text{ on }\partial B_1,\\
 v_\star\left(\frac{\cdot}{1-\lambda}\right) & \text{ on }\partial B_{1-\lambda},
 \end{cases}\\
\int_{B_1\setminus B_{1-\lambda}}\abs{\nabla\varphi}^2 \, dx &\lesssim\lambda \left(
\int_{\partial B_1}\abs{\nabla u}^2\, d\mathcal H^{n-1} + \int_{\partial B_1}\abs{\nabla v_\star}^2 \,d\mathcal H^{n-1} + \frac{1}{\lambda^2}\int_{\partial B_1}\abs{u-v_\star}^2\, d\mathcal H^{n-1}
\right),\\
\int_{B_1\setminus B_{1-\lambda}}f(\varphi) \, dx &\lesssim \lambda \int_{\partial B_1} f(u)\, d\mathcal H^{n-1}.
\end{align*}
\end{lem}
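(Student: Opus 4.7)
The approach is to adapt Luckhaus' extension construction \cite[Lemma~1]{luckhaus88} to the present setting, where $u$ need not be $\mathcal N$-valued but is quantitatively $L^2$-close to the $\mathcal N$-valued map $v_\star$. The scheme mirrors the one used in the proof of Lemma~\ref{l:modifboundary} in Appendix~\ref{a:modifboundary}. First, I bilipschitzly decompose $\partial B_1$ into a cubical mesh at scale $\lambda$, producing skeletons $\partial B_1=\bigsqcup_{j=0}^{n-1}Q_j$, $Q_j=\bigsqcup_i e_i^j$, with each $e_i^j$ bilipschitz to $B_\lambda^j$; this induces a corresponding cellular decomposition of the shell $B_1\setminus B_{1-\lambda}=\bigsqcup_j\widehat Q_j$ with $\hat e_i^j:=\{x\colon x/|x|\in e_i^j\}$. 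A standard Fubini/translation argument allows me to choose the mesh so that for every $j$ the integrals of $|\nabla u|^2$, $|\nabla v_\star|^2$, $f(u)$, and $|u-v_\star|^2$ over $Q_j$ are bounded by $\lesssim \lambda^{j+1-n}$ times their respective integrals over $\partial B_1$.

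Next, I define $\varphi$ inductively on the dimension of the shell cells. On the $1$-dimensional radial cells $\hat e_i^0=[1-\lambda,1]\cdot e_i^0$ I set
\[
\varphi(x):=u(x/|x|)+\tfrac{1-|x|}{\lambda}\bigl(v_\star(x/|x|)-u(x/|x|)\bigr),
\]
which interpolates linearly in $|x|$ between the prescribed boundary data $u$ at $|x|=1$ and $v_\star$ at $|x|=1-\lambda$. On each higher-dimensional shell cell $\hat e_i^j$ (with $\varphi$ already defined on $\partial \hat e_i^j$ from the previous inductive step together with the spherical-face data), I extend $\varphi$ into the interior by harmonic extension when the cell is $2$-dimensional, and by $0$-homogeneous extension through the center of the cell when it has dimension $\geq 3$, exactly as in \cite[Lemma~1]{luckhaus88} and as in Lemma~\ref{l:modifboundary}. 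Standard scaling estimates on each of these extensions yield the cell-by-cell bound $\int_{\hat e_i^j}|\nabla\varphi|^2\lesssim \lambda\int_{\partial\hat e_i^j}|\nabla\varphi|^2$, and iterating through the skeletons assembles the claimed global Dirichlet estimate, the term $\lambda^{-2}\int|u-v_\star|^2$ arising exclusively from the radial derivatives on $\widehat Q_0$.

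The main obstacle is the potential bound. By \eqref{A2}, one has $f(\varphi)\lesssim\dist^2(\varphi,\mathcal N)$ only if $\varphi$ stays in a fixed tubular neighborhood of $\mathcal N$. On $\widehat Q_0$, the radial interpolation gives the pointwise bound $\dist(\varphi,\mathcal N)\leq\dist(u,\mathcal N)+|u-v_\star|$; propagating this closeness through the harmonic and $0$-homogeneous extensions in higher dimensions requires uniform pointwise control of $|u-v_\star|$ on the low-dimensional skeletons. This is exactly what the smallness hypothesis $\int_{\partial B_1}|u-v_\star|^2\leq\eta^2\lambda^{2n-4}$ is designed to provide via Fubini and pigeonhole on $Q_0$: the exponent $2n-4$ is dimensionally sharp so that the resulting averaged values at the vertices lie within the tubular neighborhood where nearest-point projection onto $\mathcal N$ is well-defined. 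Once this is secured, the cell-by-cell inequality $f(\varphi)\lesssim f(u)+|u-v_\star|^2$ combined with the sharp slicing bounds, and with $|u-v_\star|^2$ absorbed into $f(u)$ (using $|u-v_\star|^2\geq \dist^2(u,\mathcal N)\sim f(u)$ wherever $u$ is close to $\mathcal N$), closes the potential estimate to the claimed $\lambda\int_{\partial B_1}f(u)$.
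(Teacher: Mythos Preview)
Your Dirichlet-energy argument is fine, but the potential bound has a genuine gap. The absorption step at the end is backwards: the inequality $|u-v_\star|^2\geq\dist^2(u,\mathcal N)\sim f(u)$ gives a \emph{lower} bound on $|u-v_\star|^2$, so it cannot be used to absorb $\int|u-v_\star|^2$ into $\int f(u)$. In fact no such absorption is possible: take $u$ to be $\mathcal N$-valued (so $f(u)\equiv 0$) but $u\neq v_\star$; then the linear interpolant generically leaves $\mathcal N$, so $\int f(\varphi)>0$ while the claimed right-hand side $\lambda\int f(u)$ vanishes. The paper explicitly identifies this obstruction, and it is precisely why naive linear interpolation between $u$ and $v_\star$ is insufficient here.

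The paper's fix is to replace the single linear interpolation on the lowest shell cells by a two-stage construction: on half the radial interval interpolate linearly from $u$ to its nearest-point projection $\pi_{\mathcal N}(u)$, and on the other half follow a constant-speed geodesic on $\mathcal N$ from $\pi_{\mathcal N}(u)$ to $v_\star$. On the geodesic part $f(\varphi)=0$; on the linear part $\dist(\varphi,\mathcal N)\leq|u-\pi_{\mathcal N}(u)|=\dist(u,\mathcal N)$ pointwise, giving $f(\varphi)\lesssim f(u)$ with no residual $|u-v_\star|^2$ term. The smallness hypothesis $\int_{\partial B_1}|u-v_\star|^2\leq\eta^2\lambda^{2n-4}$ enters not to control the potential directly, but (via a one-dimensional Sobolev estimate on the edges $Q_1$) to guarantee that $u$ stays in the tubular neighborhood where $\pi_{\mathcal N}$ is well defined and that $|\pi_{\mathcal N}(u)-v_\star|$ is small enough for the geodesic interpolation to be uniformly Lipschitz. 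With this modification the paper then extends by $0$-homogeneous extensions on $\widehat Q_j$ for $j\geq 2$ (no harmonic step is needed), and both estimates follow.
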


\begin{proof}[Proof of Lemma~\ref{l:extension}]
The proof is very similar to \cite[Lemma~1]{luckhaus88}. For the reader's convenience we sketch the full argument, and will go into details only at points where we need to depart from \cite{luckhaus88}.
We assume $\lambda=2^{-\nu}$ for some $\nu\in\mathbb N$ and, using the bilipschitz equivalence of $B_1$ with the open unit cube,  obtain a partition of $\partial B_1$ as
\begin{equation*}
\partial B_1=\bigsqcup_{j=0}^{n-1} Q_j,\quad Q_j=\bigsqcup_{i=1}^{k_j}e_{i}^j,
\end{equation*}
where each $j$-cell $e_{i}^j$ is bilipschitz equivalent to $B_\lambda^j$, the $j$-dimensional open ball  of radius $\lambda$. 
This decomposition of $\partial B_1$ induces a partition of $B_1\setminus B_{1-\lambda}$ as
\begin{equation*}
B_1\setminus B_{1-\lambda} =\bigsqcup_{j=0}^{n-1} \widehat Q_j,\quad \widehat Q_j =\bigsqcup_{i=1}^{k_j}\hat e_{i}^j,\quad
\hat e_i^j =
\left\lbrace x\in B_1\setminus B_{1-\lambda}\colon \frac{x}{\abs{x}}\in e_i^j\right\rbrace.
\end{equation*}
Moreover by Fubini's theorem we may assume that
\begin{align*}
\int_{Q_j} \abs{\nabla u}^2 \, d\mathcal H^j 
+ \int_{Q_j}\abs{\nabla v_\star}^2\, d\mathcal H^j 
&\lesssim \lambda^{j+1-n}\left( \int_{\partial B_1}\abs{\nabla u}^2\, d\mathcal H^{n-1} + \int_{B_1}\abs{\nabla v_\star}^2\, d\mathcal H^{n-1}\right),\\
\int_{Q_j}\abs{u-v_\star}^2 \, d\mathcal H^j 
&\lesssim \lambda^{j+1-n}\int_{\partial B_1}\abs{u-v_\star}^2\, d\mathcal H^{n-1},\\
\int_{Q_j} f(u)\, d\mathcal H^j
&\lesssim \lambda^{j+1-n}\int_{\partial B_1} f(u)\, d\mathcal H^{n-1}.
\end{align*} 
In \cite{luckhaus88}, the extension $\varphi$ is defined on the $2$-dimensional cells $\hat e_i^1$ by interpolating linearly between $u$ on $e_i^1$ and $v_\star$ on $(1-\lambda) e_i^1$. However in our case we would like to control $\int_{\hat Q_1}f(\varphi)$. A simple linear interpolation may not be sufficient: consider e.g. the situation where $u(x)$ would happen to be on $\mathcal N$, hence $f(u(x))=0$, but the segment between $v_\star(x)$ and $u(x)$ might contain points which are not on $\mathcal N$, and there $f(\varphi)$ would not be controlled by $f(u)$. A way around this is to first interpolate linearly between $u$ and its projection $\pi(u)\in\mathcal N$, and then geodesically between $\pi(u)$ and $v_\star$ on $\mathcal N$.

On each one dimensional cell $e_i^1$, 
we have 
\begin{align*}
\sup_{e_i^1}\abs{u-v_\star}^2 & \leq \int_{e_i^1}\abs{\nabla \abs{u-v_\star}} + \frac 1\lambda \int_{e_i^1}\abs{u-v_\star}^2\\
&\leq \left(\int_{e_i^1}(\abs{\nabla u}^2+\abs{\nabla v_\star}^2)\right)^{\frac 12}\left(\int_{e_i^1}\abs{u-v_\star}^2\right)^{\frac 12}+\frac 1\lambda \int_{e_i^1}\abs{u-v_\star}^2\\
&\lesssim \lambda^{2-n}\eta \lambda^{n-2} + \lambda^{1-n}\eta^2\lambda^{4n-2} \lesssim \eta.
\end{align*}
Provided $\eta$ is chosen small enough, this implies that on $e_i^1$ the projection $\pi_{\mathcal N}(u)$ is well defined, and satisfies
\begin{equation*}
\abs{u-\pi_{\mathcal N}(u)} + \abs{\pi_{\mathcal N}(u)-v_\star}\lesssim \abs{u-v_\star}\lesssim\eta^{\frac 12}\quad\text{on }e_i^1.
\end{equation*}
Then we define $\varphi$ on $\hat e_i^1$ by setting
\begin{equation*}
\varphi(x)=\left\lbrace
\begin{aligned}
&u\left(\frac{x}{\abs{x}}\right) + 2\frac{1-\abs{x}}{\lambda}\left(\pi_{\mathcal N}(u)\left(\frac{x}{\abs{x}}\right) - u\left(\frac{x}{\abs{x}}\right) \right) &\text{if } 1-\frac \lambda 2 \leq \abs{x} \leq 1,\\
& \gamma\left(2\frac{1-\abs{x}}{\lambda}-1,\pi_{\mathcal N}(u)\left(\frac{x}{\abs{x}}\right), v_\star \left(\frac{x}{\abs{x}}\right) \right)& \text{if } 1-\lambda\leq \abs{x}\leq 1-\frac\lambda 2,\end{aligned}
\right.
\end{equation*}
Where $\gamma(\cdot,z_1,z_2)\colon [0,1]\to\mathcal N$ denotes the constant speed geodesic from $z_1$ to $z_2$. The map $\gamma$ is Lipschitz on a neighborhood of $[0,1]\times\Delta$, where $\Delta=\lbrace (z,z)\rbrace\subset\mathcal N\times\mathcal N$, and its derivatives satisfy
\begin{equation*}
\abs{\partial_t\gamma(t,z_1,z_2)}\lesssim \abs{z_1-z_2},\quad \abs{\nabla_z\gamma}\lesssim 1.
\end{equation*}
Using this, we infer that
\begin{align*}
\int_{\widehat Q_1}\abs{\nabla\varphi}^2 \, d\mathcal H^2 &\lesssim \lambda \left(
\int_{Q_1}\abs{\nabla u}^2\, d\mathcal H^1 + \int_{Q_1}\abs{\nabla v_\star}^2 \,d\mathcal H^1 + \frac{1}{\lambda^2}\int_{Q_1}\abs{u-v_\star}^2\, d\mathcal H^1
\right) \\
&\lesssim \lambda\cdot\lambda^{2-n}
\left(
\int_{\partial B_1}\abs{\nabla u}^2\, d\mathcal H^{n-1} + \int_{\partial B_1}\abs{\nabla v_\star}^2 \,d\mathcal H^{n-1} + \frac{1}{\lambda^2}\int_{\partial B_1}\abs{u-v_\star}^2\, d\mathcal H^{n-1}
\right).
\end{align*}
Moreover for $1-\lambda\leq\abs{x}\leq 1-\frac \lambda 2$ it holds $f(\varphi(x))=0$, and for $1-\frac\lambda 2 \leq\abs{x}\leq 1$ it holds 
\begin{align*}
f(\varphi(x))&\lesssim\dist^2(\varphi(x),\mathcal N)\lesssim \abs{u-\pi_{\mathcal N}(u)}^2\left(\frac{x}{\abs{x}}\right)\\
&\lesssim \dist^2(u,\mathcal N)\left(\frac{x}{\abs{x}}\right)\lesssim f(u)\left(\frac{x}{\abs{x}}\right),
\end{align*}
and this implies
\begin{equation*}
\int_{\widehat Q_1} f(\varphi) \, d\mathcal H^2\lesssim \lambda\int_{Q_1} f(u) \, d\mathcal H^1 \lesssim \lambda \cdot\lambda^{2-n}\int_{\partial B_1} f(u)\, d\mathcal H^{n-1}.
\end{equation*}

On the higher dimensional skeletons $\widehat Q_j$ ($j\geq 2$) we extend $\varphi$ by induction, via $0$-homogenous extensions: identifying $\hat e_i^j$ with $B_\lambda^{j+1}$ (through a bilipschitz homeomorphism that we omit to write here), one may set
\begin{equation*}
\varphi(x)=\varphi\left(\lambda \frac{x}{\abs{x}}\right),
\end{equation*}
thus defining $\varphi$ on $\hat e_i^j$ through its boundary values on $\partial \hat e_i^j$, which either correspond to values of $u$ and $v_\star$ on $Q_{j}$ or to previously defined values of $\varphi$ on $\widehat Q_{j-1}$. Since $j\geq 2$, such $0$-homogenous extension has finite energy, and we obtain by induction the estimates
\begin{align*}
\int_{\widehat Q_j}\abs{\nabla\varphi}^2 \, d\mathcal H^{j+1}
&\lesssim \lambda\cdot\lambda^{j+1-n}
\left(
\int_{\partial B_1}\abs{\nabla u}^2\, d\mathcal H^{n-1} + \int_{\partial B_1}\abs{\nabla v_\star}^2 \,d\mathcal H^{n-1} + \frac{1}{\lambda^2}\int_{\partial B_1}\abs{u-v_\star}^2\, d\mathcal H^{n-1}
\right),\\
\int_{\widehat Q_j} f(\varphi) \, d\mathcal H^{j+1} &\lesssim \lambda \cdot\lambda^{j+1-n}\int_{\partial B_1} f(u)\, d\mathcal H^{n-1}.
\end{align*}
For $j=n-1$ this concludes the proof.
\end{proof}

Now we turn to the proof of the compactness result.

\begin{proof}[Proof of Proposition~\ref{p:compact}]
Along a subsequence $\e_\ell\to 0$ we have $E_{\e_\ell}(u_\ell;B_1)\lesssim 1$, where we denote $u_\ell=u_{\e_\ell}$.
Therefore, up to taking a subsequence there is $u_\star\in H^1(B_1;\mathcal N)$ such that $u_{\e_\ell}\rightharpoonup u_\star$ weakly in $H^1$ and strongly in $L^2$. Let $\rho\in (0,1)$ be fixed. By Fubini's theorem we may find $r\in [\rho,1]$ such that $E_{\e_\ell}(u_\ell;\partial B_r)\leq 1$ and we infer that $u_\ell$ converges strongly towards $u_\star$ in $L^2(\partial B_r)$, and moreover $E_\star(u_\star;\partial B_r)\leq 1$.

Let $v_\star$ minimize $E_\star(\cdot;B_r)$ with $v_\star=u_\star$ on $\partial B_r$. Using Lemma~\ref{l:extension} we construct $v_\ell\in H^1(B_r;\R^k)$ such that $v_\ell=u_\ell$ on $\partial B_r$, and $E_{\e_\ell}(v_\ell;B_r)\to E_\star(v_\star;B_r)$.
Explicitly, since $\mu_\ell:=\int_{\partial B_r}\abs{u_\ell - v_\star}^2\to 0$, we may for large enough $\ell$ apply Lemma~\ref{l:extension} to find $\lambda_\ell\to 0$ and $\varphi_\ell\in H^1(B_r\setminus B_{(1-\lambda_\ell)r})$ satisfying $\varphi_\ell=u_\ell$ on $\partial B_r$, $\varphi_\ell=v_\star(\cdot/(1-\lambda_\ell))$ on $\partial B_{(1-\lambda_\ell)r}$ and $E_{\e_\ell}(\varphi_\ell;B_r\setminus B_{(1-\lambda_\ell)r})\lesssim \lambda_\ell \to 0$. Then we set $v_\ell=\varphi_\ell$ in $B_r\setminus B_{(1-\lambda_\ell)r}$ and $v_\ell = v_\star(\cdot/(1-\lambda_\ell))$ in $B_{(1-\lambda_\ell)r}$ and obtain indeed $E_{\e_\ell}(v_\ell;B_r)\to E_\star(v_\star;B_r)$.

By the minimizing property of $u_\ell$ we infer that $\limsup E_{\e_\ell}(u_\ell;B_r)\leq \limsup E_{\e_\ell}(v_\ell;B_r)\leq E_\star (v_\star;B_r)$, and since by weak lower semicontinuity $\liminf E_{\e_\ell}(u_\ell;B_r)\geq E_\star(u_\star; B_r)$  we conclude that $u_\star$ minimizes $E_\star(\cdot;B_r)$ and $u_\ell$ converges in fact strongly to $u_\star$ in $H^1(B_r)$.
\end{proof}

\bibliographystyle{acm}
\bibliography{convQ}

\end{document}